\theoremstyle{plain}
\newtheorem{theorem}{Theorem}[section]
\newtheorem{proposition}[theorem]{Proposition}
\newtheorem{lemma}[theorem]{Lemma}
\newtheorem{corollary}[theorem]{Corollary}
\newtheorem{conjecture}[theorem]{Conjecture}
\newtheorem*{theorem*}{Theorem}
\newtheorem*{proposition*}{Proposition}
\newtheorem*{lemma*}{Lemma}
\newtheorem*{corollary*}{Corollary}
\newtheorem*{property*}{Properties}
\newtheorem*{conjecture*}{Conjecture}
\theoremstyle{definition}
\newtheorem{definition}[theorem]{Definition}
\newtheorem{remark}[theorem]{Remark}
\newtheorem*{definition*}{Definition}
\newtheorem*{example*}{Example}
\newtheorem*{remark*}{Remark}
\theoremstyle{remark}
\newtheorem*{note*}{Remark}
\newtheorem*{exercise*}{Esercizio}
\newcommand{\m}[1]{\mathcal{#1}}
\newcommand{\bb}[1]{\mathbb{#1}}
\newcommand{\mbf}[1]{\mathbf{#1}}
\newcommand{\mrm}[1]{\mathrm{#1}}
\newcommand{\msc}[1]{\textsc{#1}} 
\newcommand{\f}[1]{\mathfrak{#1}} 
\newcommand{\scr}[1]{\mathscr{#1}}
\newcommand{\idmatrix}{\mathbbm{1}}   
\newcommand{\del}{\partial}         
\newcommand{\delbar}{\bar{\partial}} 
\newcommand{\dd}{\mrm{d}}
\newcommand{\acts}{\curvearrowright} 
\newcommand{\Aut}{\mathrm{Aut}}
\DeclarePairedDelimiter\norm{\lVert}{\rVert} 
\DeclarePairedDelimiter{\set}{\{}{\}}
\newcommand{\ext}[1]{\bigwedge\nolimits^{#1}}
\numberwithin{equation}{section} 
\author{Annamaria Ortu}
\title[Optimal symplectic connections]{Optimal symplectic connections and deformations of holomorphic submersions}
\date{}
\email{aortu [at] sissa [dot] it}
\begin{document}

\begin{abstract}
We give a general construction of extremal K\"ahler metrics on the total space of certain holomorphic submersions, extending results of Dervan-Sektnan, Fine, and Hong. We consider submersions whose fibres admit a degeneration to K\"ahler manifolds with constant scalar curvature, in a way that is compatible with the fibration structure. Thus we allow fibres that are K-semistable, rather than K-polystable; this is crucial to moduli theory. On these fibrations we phrase a partial differential equation whose solutions, called \emph{optimal symplectic connections}, represent a canonical choice of a relatively K\"ahler metric. We expect this to be the most general construction of a canonical relatively K\"ahler metric provided all input is smooth.
We use the notion of an optimal symplectic connection and the geometry related to it to construct K\"ahler metrics with constant scalar curvature and extremal metrics on the total space, in adiabatic classes.
\end{abstract}

\maketitle



\section{Introduction}
Let $\pi: (X,H) \to (B,L)$ be a holomorphic submersion of a relatively polarised compact K\"ahler manifold onto a compact K\"ahler base. We address the problem of finding conditions under which the total space $X$ admits an extremal metric in the \emph{adiabatic classes}
\begin{equation*}
c_1(H) + kc_1(L) \quad \textrm{for} \ k\gg 0.
\end{equation*}
In general, one expects that such conditions reflect the geometry of the fibres and the geometry of the base, taking into account possible automorphisms and moduli of the fibres.
When all the fibres of $\pi$ admit a constant scalar curvature K\"ahler (cscK) metric, the problem was solved by Dervan and Sektnan in \cite{DervanSektnan_OSC1}, building on previous results by Hong \cite{Hong_cscK} and Fine \cite{Fine_cscK_fibrations} in more special situations.
Here we extend their result to more general fibrations, whose fibres are K-semistable analytic deformations of cscK fibres. We expect ours to be the most general situation where it is possible to construct extremal metrics in an adiabatic limit, provided all data considered is smooth. In particular, the condition we phrase on the fibration, called the \emph{optimal symplectic connection condition}, should mirror the Hermite-Einstein condition for vector bundles, and lead to the construction of a moduli space of holomorphic submersions and to a link with an algebro-geometric notion of stability.

The easiest and most instructive case to understand the ingredients involved is indeed the construction of constant scalar curvature K\"ahler metrics on the total space of \emph{projectivised vector bundles}, studied by Hong in \cite{Hong_cscK}. Let $\m{E} \to (B,L)$ be a holomorphic simple vector bundle, endowed with a Hermitian metric $h$. Let $\bb{P}(\m{E}) \to B$ be its projectivisation, obtained by taking over each $b \in B$ the projective space $\bb{P}({\m{E}}_b)$. Then $h$ induces a Hermitian structure $h^\vee$ on the hyperplane bundle $\m{O}_{\bb{P}(\m{E})}(1)$; the curvature form $F_{h^\vee}$ is such that $\omega_h := iF_{h^\vee}$ restricts to the Fubini-Study metric on each fibre of $\bb{P}(\m{E})$, which is cscK (in fact K\"ahler-Einstein).

If the Hermitian metric $h$ satisfies the Hermite-Einstein condition
\begin{equation*}
\Lambda_{\omega_B} F_h = \lambda\mathbbm{1},
\end{equation*}
then it is uniquely determined by the equation, which implies that there is a canonical choice of the Fubini-Study metric on the fibres of $\bb{P}(\m{E})\to B$. This choice allowed Hong to construct constant scalar curvature K\"ahler metrics on $\bb{P}(\m{E})$ in each adiabatic class $O_{\bb{P}(\m{E})}(1) + k L$ for all $k \gg 0$.

\subsection*{Optimal symplectic connections}
In the more general case of a polarised fibration with cscK fibres, $\pi: (X,H) \to (B,L)$, Dervan and Sektnan introduced in \cite{DervanSektnan_OSC1} a condition analogous to the Hermite-Einstein condition for projectivised vector bundles: the \emph{optimal symplectic connection} condition.
Let $\omega_X \in c_1(H)$ be a relative symplectic form on $X$ which restricts to a K\"ahler metric with constant scalar curvature on each fibre. Such 2-form is called a \emph{symplectic connection} in the language of symplectic fibrations because it determines a splitting of the tangent bundle of $X$ into a vertical and a horizontal part
\begin{equation*}
TX = \m{V}\oplus \m{H}^{\omega_X},
\end{equation*}
where $\m{H}_{\omega_X}$ is defined using orthogonality with respect to $\omega_X$. If one assumes that the fibres have a cscK metric, then these metrics can be used to construct a relatively cscK metric $\omega_X$ on $X$, but such an $\omega_X$ is not unique if the fibres have nontrivial automorphisms.
An optimal symplectic connection is then a preferred choice of $\omega_X$, defined in terms of a solution to a second-order elliptic PDE on the real vector bundle $E\to B$ of relatively cscK metrics. This choice allows one to consider a \emph{canonical} relatively cscK metric on the fibres.

In this work we extend further their result to the following setting. Let $(Y, H_Y) \to B$ be a holomorphic submersion and assume that the fibres are \emph{analytically K-semistable} manifolds, i.e. they each admit a degeneration to a cscK manifold. We assume also that these degenerations vary holomorphically in $B$, so that we have a degeneration $(\m{X}, \m{H}) \to B \times S$ of $(Y, H_Y) \to B$ to a fibrewise cscK fibration $(X, H) \to B$ parametrised by $S \in \bb{C}$, with a $\bb{C}^*$-action on $B \times S$ which lifts equivariantly to $(\m{X}, \m{H})$.
Using a relative version of Ehresmann's theorem (Proposition \ref{Prop:Ehresmann_relative}) we take the perspective of varying the complex structure of the underlying symplectic fibration, from a relatively cscK complex structure $J_0$ to small compatible deformations $J_s$ which keep $\pi$ holomorphic.
We also assume that the complex structure $J_B$ of the base is fixed, so the deformations we consider are just vertical.

Thus we can start by focusing on a single fibre, for which we rely on the deformation theory of cscK manifolds developed by Sz{\'e}kelyhidi \cite{Szekelyhidi_deformations} and Br\"onnle \cite{Bronnle_PhDthesis}, based in turn on the moment map interpretation of scalar curvature introduced by Fujiki \cite{Fujiki_momentmap} and Donaldson \cite{Donaldson_momentmap}. More precisely, let $(M, \omega, J_0)$ be a cscK manifold and let $\scr{J}$ be the infinite dimensional complex manifold of (almost) complex structures on $M$ which are compatible with $\omega_X$. The scalar curvature map
\begin{equation*}
J \mapsto \mrm{Scal}(\omega, J)-\widehat{S}
\end{equation*}
is an infinite dimensional moment map for the pull-back action of the group $\scr{G}$ of Hamiltonian symplectomorphisms on $\scr{J}$. In \cite{Szekelyhidi_deformations}, Sz{\'e}kelyhidi proves an analogue of Luna's slice theorem for the action $\scr{G} \acts \scr{J}$: he considers the Kuranishi space $V$ of compatible (almost) complex structures close to $J_0$, which is an open ball in a finite dimensional vector space and it parametrises the complexified orbits of $\scr{G}$ via an embedding $\Phi : V \hookrightarrow \scr{J}$. He proves that, in a neighborhood of the K-polystable point $J_0$, the scalar curvature can be reduced to a \emph{finite dimensional} moment map for the action of $K = \mrm{Stab}_{\scr{G}}(J_0)$ on $V$. Its second derivative is the moment map for the linearised action of $K$ on $T_0V$
\begin{equation}\label{Eq:map_nu_intro}
\nu : T_0V \to \f{k}.
\end{equation}

Going back to the fibration setting, for each $b \in B$ the Lie algebra $\f{k}_b$ can be identified with the fibre $E_b$ of the vector bundle of fibrewise cscK metrics. Since the deformations we are considering are vertical, in \eqref{Eq:nu_section} we define a smooth section $\nu$ of $E \to B$ which on each fibre is the map \eqref{Eq:map_nu_intro}, which therefore encodes the deformation of the complex structure.
We say that $\omega_X$ is an \emph{optimal symplectic connection} on $(Y, H_Y)$ if
\begin{equation*}
p_E(\Updelta_{\m{V}}(\Lambda_{\omega_B} (m^*F_{\m{H}})) + \Lambda_{\omega_B} \rho_{\m{H}}) + \frac{\lambda}{2}\nu = 0.
\end{equation*}
In this expression $F_{\m{H}}$ and $\rho_{\m{H}}$ are curvature quantities which depend on $\omega_X$ and $J_0$, $p_E$ is the projection onto the global sections of the vector bundle $E \to B$ computed with respect to $J_0$, and $\lambda >0$. The vanishing of the first term is the condition for an optimal symplectic connection in the sense of \cite{DervanSektnan_OSC1}, i.e. where all the fibres are cscK, so our notion generalises their notion.
This seems to be the first geometric PDE in complex geometry which involves both curvature quantities and the change in complex structure.

We expect optimal symplectic connections to be unique, as happens in the relatively cscK case \cite{DervanSektnan_OSC3, Hallam_geodesics}, up to the action of a suitable subset of the automorphisms of $Y$ which preserves the projection $\pi_Y$. In this way, one can genuinely call an optimal symplectic connection a \emph{canonical choice} of a relatively K\"ahler metric on a relatively K-semistable fibration.

\subsection*{Extremal metrics on the total space of holomorphic submersions}
We make use of optimal symplectic connections to prove the existence of cscK and extremal metrics on the total space $Y$. In order to choose an appropriate metric on the base manifold, we will need some moduli theory (developed in \cite{Fujiki_Schumacher_Moduli_cscK,DervanNaumann_ModuliCscK}). Let $\m{M}$ be the moduli space of cscK manifolds and let $q : B \to \m{M}$ be the moduli map induced by the central family $(X, H) \to B$, which fibres are cscK. $\m{M}$ can be endowed with a \emph{Weil-Petersson} K\"ahler metric, and we denote by $\alpha_{WP}$ the pull-back of it via $q$. This is a smooth semi-positive $(1,1)$-form on $B$.

We first consider the case where the group of automorphisms of $(Y, H_Y)$ and of $(B,L)$ (or rather of the map $q$) are discrete. Thus we require that the base admits a \emph{twisted cscK metric} with twisting form $\alpha_{WP}$:
\begin{equation*}
\mrm{Scal}(\omega_B) - \Lambda_{\omega_B}\alpha_{WP} = c.
\end{equation*}

\begin{theorem}\label{Thm:1}
Let $\omega_X$ be an optimal symplectic connection and $\omega_B$ be a twisted cscK metric with twisting $\alpha_{WP}$. Then there exists a constant scalar curvature K\"ahler metric on $Y$ in the class $c_1(H_Y) + kc_1(L)$ for all $k \gg 0$.
\end{theorem}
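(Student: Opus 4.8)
The plan is to construct the cscK metric on $Y$ by a perturbation argument in the adiabatic parameter $k$, following the strategy of Hong and of Dervan--Sektnan but incorporating the deformation of the complex structure. First I would set up the problem on the central fibration $(X,H)\to(B,L)$: starting from the optimal symplectic connection $\omega_X$ and the twisted cscK metric $\omega_B$, form the relatively K\"ahler metric $\omega_k = \omega_X + k\pi^*\omega_B$ on $X$ and compute the asymptotic expansion of $\mathrm{Scal}(\omega_k)$ in powers of $1/k$ (or rather, after rescaling, in $k$). The leading-order terms are controlled fibrewise by the cscK condition on the central fibres; the next terms involve $\Lambda_{\omega_B}F_{\m{H}}$, $\rho_{\m{H}}$, the vertical Laplacian, and the Weil--Petersson form $\alpha_{WP}$ entering through the base. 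The point of the twisted cscK equation on $\omega_B$ and of the optimal symplectic connection equation on $\omega_X$ is precisely that they kill the obstruction at the relevant orders, so that $\mathrm{Scal}(\omega_k)$ is constant up to an error that is small and lies in a controllable complement.

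Next I would address the genuine novelty compared to \cite{DervanSektnan_OSC1}: the fibres of $Y$ are only analytically K-semistable, so $Y$ is a deformation $J_s$ of the central complex structure $J_0$ via the relative Kuranishi/Ehresmann picture (Proposition \ref{Prop:Ehresmann_relative}). Here one views $Y$ as the fixed symplectic manifold underlying $X$ equipped with a deformed complex structure, and the relevant scale of the deformation parameter $s$ must be tied to $k$ — concretely one takes $s \sim k^{-1/2}$ or a similar power so that the contribution of the deformation, measured by the section $\nu$ of $E\to B$ coming from \eqref{Eq:map_nu_intro}, enters $\mathrm{Scal}$ at exactly the same order as the curvature terms $\Updelta_{\m{V}}(\Lambda_{\omega_B}(m^*F_{\m{H}})) + \Lambda_{\omega_B}\rho_{\m{H}}$. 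This is where the specific form of the optimal symplectic connection equation, with its $\frac{\lambda}{2}\nu$ term, is used: it is exactly the solvability condition for the linearised problem at that order. I would expand $\mathrm{Scal}(\omega_{k,s})$ jointly in $k$ and $s$, substitute $s = s(k)$, and check that the optimal symplectic connection and twisted cscK equations make the first several coefficients vanish or constant.

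The analytic engine is then a quantitative implicit function theorem: one shows the linearised operator — the Lichnerowicz operator of $\omega_{k,s}$, acting on K\"ahler potentials — is uniformly invertible modulo the (approximate) kernel coming from holomorphic vector fields, with operator norm bounds that degenerate in $k$ only polynomially, slower than the error term decays. Since the automorphism groups of $(Y,H_Y)$ and of the moduli map $q$ are assumed discrete, after fibrewise and base reductions there is no residual kernel obstructing the construction, so a straightforward fixed-point argument in suitable weighted H\"older or Sobolev spaces produces, for each $k\gg0$, a genuine solution $\mathrm{Scal}=\mathrm{const}$ near $\omega_{k,s(k)}$, lying in $c_1(H_Y)+kc_1(L)$ after the identification of cohomology classes under the Ehresmann diffeomorphism.

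The main obstacle I anticipate is the interplay of the two asymptotic parameters. In the relatively cscK case only $k$ appears, and the inductive construction of approximate solutions is delicate but well-understood; here the deformation parameter $s$ and its coupling $s(k)$ must be calibrated so that (i) the deformed complex structure $J_{s(k)}$ is genuinely close enough to $J_0$ for the Kuranishi description and the curvature expansions to be valid, (ii) the $\nu$-term lands at the correct order, and (iii) the error terms one does \emph{not} control explicitly — in particular mixed terms involving both the curvature of $\m{H}$ and the deformation — are strictly subleading. Establishing uniform (in $k$) bounds on the inverse of the linearised operator while the background metric itself is moving in two directions, and verifying that the optimal symplectic connection equation is the \emph{exact} obstruction-vanishing condition rather than merely a necessary one, is the technical heart of the argument. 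A secondary difficulty is bookkeeping the cohomology class: one must check that the metrics produced on the deformed space $Y$ indeed represent $c_1(H_Y)+kc_1(L)$ and not merely a nearby class, which uses that the deformation is along fixed relative cohomology via the relative Ehresmann theorem.
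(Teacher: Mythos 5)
Your proposal follows the same strategy as the paper: the adiabatic limit $\omega_k=\omega_X+k\pi^*\omega_B$, a two-parameter expansion of $\mathrm{Scal}$ in $k^{-1}$ and $s$ with the calibration $s^2=\lambda k^{-1}$ so that the moment-map term $\nu$ enters at the same order $k^{-1}$ as the curvature terms, the twisted cscK and optimal symplectic connection equations as the exact obstruction-vanishing conditions, and a quantitative implicit function theorem as the final perturbation step. The one point you leave vague is worth spelling out: the inductive construction of approximate solutions is organised around the three-fold decomposition $C^\infty(X)=C^\infty(B)\oplus C^\infty(E)\oplus C^\infty(R)$ and proceeds in half-integer powers of $k^{-1/2}$ (because $s=O(k^{-1/2})$); the ability to solve the $C^\infty(E)$-equation at each order rests on the fact that the linearisation $\widehat{\mathcal{L}}=\mathcal{R}^*\mathcal{R}+\mathcal{A}^*\mathcal{A}$ of the optimal symplectic connection equation is a self-adjoint elliptic operator on global sections of $E\to B$, and establishing this requires a genuinely relative form of Kuranishi's theorem, not merely the relative Ehresmann trivialisation you invoke.
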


If we allow the moduli map $q$ of the central fibration and the total space $(Y, H_Y)$ to have automorphisms, the adiabatic limit method produces \emph{extremal metrics} on the total space. In this case, we have to modify our hypothesis on $\omega_X$ and $\omega_B$ as follows: we require that $\omega_B$ is \emph{twisted extremal}, i.e.
\begin{equation*}
\mrm{Scal}(\omega_B) - \Lambda_{\omega_B}\alpha_{WP}
\end{equation*}
is a holomorphy potential on $B$ and that $\omega_X$ is an \emph{extremal symplectic connection}, i.e.
\begin{equation*}
p_E(\Updelta_{\m{V}}(\Lambda_{\omega_B} (m^*F_{\m{H}})) + \Lambda_{\omega_B} \rho_{\m{H}}) + \frac{\lambda}{2}\nu
\end{equation*}
is a holomorphy potential on $Y$. We also need some technical assumptions which we will explain in \S \ref{Subsec:approx_sol_extremal}: the group of automorphisms of $\pi_Y$ acts equivariantly on the family $\m{X} \to B \times S$ and $\omega_X$ is invariant under the flow of the extremal vector fields.
\begin{theorem}\label{Thm:2}
Suppose that $(B, L)$ admits a twisted extremal metric $\omega_B$ and $(Y, H_Y)$ admits an extremal symplectic connection $\omega_X$. Suppose also that all automorphisms of the moduli map $q$ lift to $(Y, H_Y)$. Then there exists an extremal metric on $Y$ in the class $c_1(H_Y) + kc_1(L)$ for all $k \gg 0$.
\end{theorem}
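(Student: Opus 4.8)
The plan is to run the adiabatic-limit argument of Fine, Hong and Dervan--Sektnan in its $T$-equivariant form, reducing Theorem~\ref{Thm:2} to a quantitative implicit function theorem for a family of elliptic operators on $Y$. Fix a maximal compact torus $T$ inside the identity component of the group of automorphisms of $\pi_Y$ compatible with the $\bb{C}^*$-action building $\m{X}\to B\times S$; using the hypothesis that the automorphisms of the moduli map $q$ lift to $(Y,H_Y)$, every construction below can be carried out $T$-equivariantly, and $\omega_X$, $\omega_B$ are $T$-invariant. Since the extremal vector field of a K\"ahler class is topological, I would first pin down the candidate extremal vector field $\chi$ of $c_1(H_Y)+kc_1(L)$: to leading order in $k^{-1}$ it is assembled from the fibrewise extremal vector fields of the central fibration $X$, the extremal vector field of the twisted extremal base metric $\omega_B$, and the vector field whose holomorphy potential is $p_E(\Updelta_{\m{V}}(\Lambda_{\omega_B}(m^*F_{\m{H}}))+\Lambda_{\omega_B}\rho_{\m{H}})+\tfrac{\lambda}{2}\nu$. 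Following Sz{\'e}kelyhidi's treatment of extremal metrics, a $T$-invariant metric $\omega_X+k\omega_B+i\del\delbar\varphi$ in this class is extremal precisely when it is a zero of the \emph{reduced operator} $\m{G}_k(\varphi)=\mrm{Scal}(\omega_X+k\omega_B+i\del\delbar\varphi)-\ell_\varphi$, where $\ell_\varphi$ is the suitably normalised holomorphy potential of $\chi$ for that metric.

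First I would construct approximate solutions. Expanding $\mrm{Scal}(\omega_X+k\omega_B)$ in powers of $k^{-1}$ and correcting it order by order by $T$-invariant potentials of three types --- functions pulled back from $B$, fibrewise functions $L^2$-orthogonal to the fibrewise holomorphy potentials, and sections of the bundle $E\to B$ --- together with the deformation parameter $s$ of the complex structure $J_s$ exactly as in the definition of $\nu$, one solves at each stage a linear equation governed, at the first nontrivial orders, by the fibrewise Lichnerowicz operator of $X$, by the linearisation of the extremal symplectic connection operator on $C^\infty(B,E)$, and by the twisted Lichnerowicz operator of $\omega_B$. The hypotheses that $\omega_X$ is an extremal symplectic connection and that $\omega_B$ is twisted extremal are exactly what place the relevant right-hand sides in the images of these operators (the twisted-extremal and extremal-symplectic-connection potentials being absorbed into $\chi$). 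Iterating, for every $N$ one obtains a $T$-invariant relatively K\"ahler metric $\omega_{k,N}$ in $c_1(H_Y)+kc_1(L)$, for the deformed complex structure on $Y$, with $\m{G}_k(0)=O(k^{-N})$ in appropriate weighted H\"older norms.

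Next comes the analysis of the linearisation. After the natural rescaling, $(D\m{G}_k)_0$ at $\omega_{k,N}$ is to leading order block-lower-triangular for the splitting of $T$-invariant functions on $Y$ into base, $E$- and fibre components: the fibre block is the fibrewise Lichnerowicz operator of the cscK fibration $X$, invertible on the orthogonal complement of its kernel $C^\infty(B,E\otimes\bb{C})$; the $E$-block is the linearised extremal symplectic connection operator, invertible modulo the global holomorphy potentials of $Y$; and the base block is the twisted Lichnerowicz operator of $\omega_B$, invertible modulo the holomorphy potentials of $B$. Patching these yields that $(D\m{G}_k)_0$ is invertible on the $L^2$-complement of the holomorphy potentials of $(Y,H_Y)$, with inverse of operator norm $O(k^{m})$ for a fixed $m$. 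Choosing $N>m$, controlling the nonlinear remainder $\m{G}_k(\varphi)-\m{G}_k(0)-(D\m{G}_k)_0(\varphi)$ uniformly in $k$ on a ball whose radius shrinks slower than $k^{-m}$, and invoking the quantitative inverse function theorem of Fine and Sz{\'e}kelyhidi, one produces for all $k\gg0$ a $T$-invariant $\varphi_k$ with $\m{G}_k(\varphi_k)=0$; since $\chi$ is the genuine extremal vector field of the class, $\omega_{k,N}+i\del\delbar\varphi_k$ is an extremal metric, proving the theorem.

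I expect the main obstacle to be the change of complex structure. Because the fibres of $Y$ are only K-semistable, the fibrewise Lichnerowicz operator on $Y$ has small eigenvalues --- coming from fibrewise approximately holomorphic vector fields that fail to integrate --- so it is not uniformly invertible on a fixed complement and the scheme above breaks down on $Y$ itself. The resolution, which is the technical heart of the matter, is to run the whole argument on the central fibrewise-cscK fibration $X$, where the fibrewise Lichnerowicz operator has the clean kernel $C^\infty(B,E\otimes\bb{C})$, and then to transfer the estimates to $Y$ through the family $\m{X}\to B\times S$ and the relative Ehresmann theorem (Proposition~\ref{Prop:Ehresmann_relative}); the key point is that the term $\tfrac{\lambda}{2}\nu$ --- the fibrewise second derivative of the Fujiki--Donaldson moment map supplied by the slice theorems of Sz{\'e}kelyhidi and Br\"onnle --- provides exactly the coercivity missing in those directions. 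Making this quantitative, uniformly in $k$ and $T$-equivariantly, and checking the analogous coercivity for the reduced \emph{extremal} operator rather than the scalar curvature operator, is where the bulk of the work lies.
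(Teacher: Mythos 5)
Your proposal has the right overall architecture --- adiabatic expansion, $T$-equivariance, order-by-order corrections in the three components of the function space, and a quantitative implicit function theorem --- and it correctly identifies that the complex structure $J_s$ of $Y$ must be coupled to the adiabatic parameter $k$. The framing of the key difficulty, however, is imprecise in a way that would stall an actual implementation, and it misattributes where the crucial invertibility comes from.

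The paper does not ``run the argument on $X$ and then transfer estimates to $Y$.'' Via the relative Ehresmann theorem, the fibration is a \emph{fixed} underlying smooth submersion $(X,\omega_X)\to B$ with a varying compatible complex structure $J_s\in\scr{J}_\pi$; the scalar curvature $\mrm{Scal}(\omega_k,J_s)$ is expanded around $J_0$ in \emph{two} parameters, and the specific coupling $s^2=\lambda k^{-1}$ is essential: by Proposition~\ref{Prop:expansion_mu_nu}/\eqref{Eq:mu_pi_expansion}, $\mrm{Scal}_{\m{V}}(\omega_X,J_s)=\widehat{S}_b+\tfrac{s^2}{2}\nu+O(s^3)$, so that $\tfrac{\lambda}{2}\nu$ lands at order $k^{-1}$, matching the curvature terms $\Theta(\omega_X,J_0)$. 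This coupling also forces the approximate solutions to be built in half-integer powers $k^{-j/2}$, with different powers attached to $C^\infty(B)$, $C^\infty(E)$, and $C^\infty(R)$ (Proposition~\ref{Prop:approx_solutions_aut}); your scheme, described purely in integer powers of $k^{-1}$, would miss the intermediate orders at which the $E$-corrections occur. The tool that makes all of this well-posed is the \emph{relative Kuranishi theorem} (Theorem~\ref{Thm:relative_Kuranishi}) and its refinement (Proposition~\ref{Prop:Kuranishi_map_cscK}), which guarantee that $\nu$ is a smooth section of $E\to B$ and identify the linearisation on $C^\infty(E)$ as $\widehat{\m{L}}=\m{R}^*\m{R}+\m{A}^*\m{A}$; you invoke the Ehresmann statement but not this, and it cannot be bypassed.

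Finally, the claim that ``$\tfrac{\lambda}{2}\nu$ provides exactly the coercivity missing in those directions'' is not how the linear analysis closes. The operator $\m{A}^*\m{A}$, which is what linearising $\nu$ contributes, is of order zero on $C^\infty(E)$; ellipticity of $\widehat{\m{L}}$ is carried entirely by $\m{R}^*\m{R}$ (Theorem~\ref{Thm:linearisation_genOSC}). What $\m{A}^*\m{A}$ does is shrink the kernel: $\mrm{ker}\widehat{\m{L}}$ consists of fibrewise $J_0$-holomorphy potentials that are simultaneously global $J_s$-holomorphy potentials (Proposition~\ref{Prop:kernel_linearisation}), i.e.\ $\mrm{Lie}(\Aut(\pi_s))\cap\mrm{Lie}(\Aut(\pi))$. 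The invertibility modulo holomorphy potentials of $Y$ then relies on the equivariance of the family under $\Aut(\pi_Y)$ --- which reduces this kernel to $\mrm{Lie}(\Aut(\pi_Y))$ --- and on the hypothesis that automorphisms of $q$ lift, which via the short exact sequence (Lemma~\ref{Lemma:ses}) assembles the kernels of the $E$-block and the base block into $\mrm{Lie}(\Aut(Y,H_Y))$. Your proposal gestures at this but attributes it to ``coercivity from $\nu$'' rather than to these lifting/equivariance hypotheses, and without them the linear system at each stage has a mismatch between the cokernel of the linearised operator and the available holomorphy-potential adjustments.
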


Our result generalises previous works by many authors which consider more special situations: we already mentioned Hong's paper \cite{Hong_cscK} about cscK metrics on the projectivisation of holomorphic vector bundles, in the case of discrete group of automorphism.
When the fibres admit moduli, Fine \cite{Fine_cscK_fibrations} proved the existence of cscK metrics on the total space of a fibration where all the fibres and the base are Riemann surfaces of genus $g\ge 2$.
In this case, the choice of a relatively K\"ahler metric on the total space falls naturally on the hyperbolic metric, and the optimal symplectic connection condition is vacuous. Again on the projectivisation of vector bundles, Br\"onnle \cite{Bronnle_extremal_projvb} proved the existence of extremal metrics in the presence of automorphisms. In both the cases of projectivised vector bundles, the relevant condition to require in order to produce such special metrics is the Hermite-Einstein condition on the corresponding vector bundle.
It has been proved by Dervan-Sektnan \cite{DervanSektnan_OSC1} that the optimal symplectic connection condition reduces to the Hermite-Einstein condition on projectivised vector bundles, thus being a genuine generalisation.
Moreover, they prove Theorems \ref{Thm:1} and \ref{Thm:2} in the case of a relatively cscK fibration.
Our notion of an optimal symplectic connection on a relatively K-semistable fibration should be the most general condition to ask in order to produce cscK or extremal metrics with an adiabatic limit technique, provided all data is smooth and the aforementioned hypotheses on the lifting of the automorphism groups hold.


The proof of the theorems is carried out using the \emph{adiabatic limit} technique, a strategy which originates in K\"ahler geometry in the work of Fine \cite{Fine_cscK_fibrations}.
Even if the situation he considers is different, some of the analytic results are general. In the prior work, the adiabatic limit strategy consists of expanding the scalar curvature of $\omega_X + k \omega_B$ in inverse powers of $k$, with the idea that if $k$ is large the base becomes very big and the curvature is concentrated in the vertical direction.
In the easiest case of discrete automorphisms, the optimal symplectic connection condition and the twisted cscK equation on the base allow one to find a relatively K\"ahler metric which is constant to order $k^{-1}$. Then one proceeds inductively, adding at each step $r$ a potential $i\del\delbar\varphi_r$ in order to make the scalar curvature constant up to the $k^{-r-1}$-term.
The implicit function theorem then allows one to deform the approximate solution to a genuine solution.

Our approach is a version of the one just described, except with \emph{two} parameters. We consider a degeneration $\m{X} \to B \times S$ of the fibration $Y \to B$ to the relatively cscK fibration $X \to B$ and we expand the scalar curvature of the K\"ahler metric $(\omega_X + k \omega_B, J_s)$ in inverse powers of $k$ and powers of $s$. Then we relate the parameters $k$ and $s$ by imposing $\lambda k^{-1} = s$, for some $\lambda >0$. We understand the linearisation of the optimal symplectic connection equation by proving a relative version of Kuranishi's Theorem (see \ref{Thm:relative_Kuranishi}), then we can apply the adiabatic limit technique as in previous works.

\subsection*{Outlook}
The optimal symplectic connection equation has a deep algebro-geometric meaning, coming from the fact that it can be viewed as a generalisation of the Hermite-Einstein condition for vector bundles, as well as a way to study how polarised varieties vary in families. So a natural question one could ask is whether optimal symplectic connections are linked to moduli problems of polarised varieties.

In the case of vector bundles, one can form a moduli space of \emph{stable} holomorphic vector bundles, and the Hitchin-Kobayashi correspondence of Donaldson-Uhlenbeck-Yau \cite{Donaldson_HYMstability, UhlenbeckYau_HYMstability} states that a holomorphic vector bundle on a compact K\"ahler manifold is stable if and only if it admits a Hermite-Einstein connection.

For polarised K\"ahler manifolds, there is an algebraic notion of stability, \emph{K-stability}, which is conjecturally equivalent by the Yau-Tian-Donaldson conjecture to the existence of cscK metrics \cite{Yau_OpenProblems, Tian_KahlerEinstein, Donaldson_1StabilityToric}. It has been proved that it is possible to form an analytic moduli space of cscK manifolds \cite{Fujiki_Schumacher_Moduli_cscK, DervanNaumann_ModuliCscK}; see also \cite{Inoue_ModuliSpaceFano}.

Motivated by these results, Dervan, Sektnan \cite{DervanSektnan_OSC2} and Hallam \cite{Hallam_geodesics} introduced and studied a notion of \emph{stability of fibrations} which on one hand is a generalisation of slope-stability for vector bundles, and on the other hand is a generalisation of K-stability for K\"ahler manifolds. Though they only consider the case of stability of fibrations when the fibres admit cscK metrics, the notion of stability also makes sense in the case where the fibres are just K-semistable. This is sharp: the notion is not reasonable when the fibres are $K$-unstable, giving more evidence of the fact that the situation we consider is the most general possible in the smooth case.

One can also take the analytic perspective, and try to form a moduli space of fibrations that admit an optimal symplectic connection. We conjecture that our generalisation of the optimal symplectic connection condition leads to the existence of such moduli space. In particular, we expect that the optimal symplectic connection condition is open in families of holomorphic submersions with discrete automorphisms.

\begin{conjecture}
There exists a Hausdorff complex space which is a moduli space for holomorphic fibrations which admit an optimal symplectic connection.
\end{conjecture}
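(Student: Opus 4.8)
The plan is to build the moduli space by patching together local Kuranishi charts, following the template used for the moduli space of cscK manifolds in \cite{Fujiki_Schumacher_Moduli_cscK, DervanNaumann_ModuliCscK}, but carried out relatively over the base $B$. Start from a fibration $(Y, H_Y) \to B$ admitting an optimal symplectic connection. Fixing the underlying symplectic fibration and the base complex structure $J_B$, the deformations of $(Y, H_Y)$ through holomorphic submersions are, by the relative version of Ehresmann's theorem (Proposition \ref{Prop:Ehresmann_relative}) and the relative Kuranishi theorem (Theorem \ref{Thm:relative_Kuranishi}), parametrised by a finite-dimensional Kuranishi space $V_Y$ equipped with a holomorphic action of $\Aut(\pi_Y)$, the group of automorphisms of $Y$ covering $\mrm{id}_B$. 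The first step is \emph{openness}: the optimal symplectic connection equation defines, near $J_0 \in V_Y$, a smooth map into the global sections of $E \to B$ which --- at a point where $\Aut(\pi_Y)$ is discrete --- is a submersion, because its linearisation is the relative analogue of the Lichnerowicz-type operator governing the cscK case and is invertible. Hence the fibrations near $Y$ admitting an optimal symplectic connection form a complex-analytic subspace $Z \subset V_Y$, which is exactly the statement flagged in the Outlook, and one obtains the local chart $Z / \Aut(\pi_Y)$.

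Second, I would glue these charts. For discrete $\Aut(\pi_Y)$ the quotient $Z/\Aut(\pi_Y)$ is an orbifold chart; in the general case, using the Luna-type slice theorem of Sz{\'e}kelyhidi \cite{Szekelyhidi_deformations} fibrewise, one expects $\Aut(\pi_Y)$ to be reductive at an optimal symplectic connection and the quotient to be understood in the GIT sense, exactly as for cscK manifolds. On overlaps, two slices around nearby fibrations are identified through an element of the automorphism group of the ambient family, and checking the cocycle conditions patches the charts into a complex space $\m{M}$ parametrising isomorphism classes of holomorphic fibrations admitting an optimal symplectic connection. One must also verify that the $\bb{C}^*$-equivariant structure on a degeneration $\m{X} \to B \times S$ of $Y$ to a relatively cscK fibration is compatible with the slices, so that the analytically K-semistable fibres are handled coherently.

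The main obstacle is \emph{Hausdorffness}, and it is where the construction depends on an input that the paper only conjectures. In the cscK setting separatedness of the moduli space follows from uniqueness of the canonical metric up to automorphism (and, for vector bundles, from the Hitchin--Kobayashi correspondence \cite{Donaldson_HYMstability, UhlenbeckYau_HYMstability}); here one needs the analogous uniqueness of an optimal symplectic connection on $(Y, H_Y)$ modulo the subgroup of $\Aut(Y)$ preserving $\pi_Y$, which is expected by analogy with \cite{DervanSektnan_OSC3, Hallam_geodesics} but not yet established. Granting it, the usual argument applies: if two points of the glued space were non-separated, choose sequences of fibrations-with-optimal-connection converging to both limits, extract a limiting optimal symplectic connection on a common degeneration, and invoke uniqueness to identify the limits. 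A further subtlety peculiar to the K-semistable case is that $\dim V_Y$ can jump as the fibres degenerate towards their cscK models, so $\m{M}$ is expected to be a genuine complex space rather than a manifold; controlling these jumps, together with proving the conjectural openness and uniqueness statements, is the real content of the problem.
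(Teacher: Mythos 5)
This statement is explicitly labelled a \emph{conjecture} in the paper, and no proof is given; the Outlook section even spells out that "the first step in this direction would be proving the openness of the optimal symplectic connection condition" and defers this to future work. So there is no paper argument to compare yours against, and your write-up should be read as a strategy sketch rather than a proof.

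As a sketch, you have identified the expected route and — importantly — you have been honest about where it actually fails. The two genuine gaps you flag are precisely the ones the paper itself points to: (1) \emph{openness}, i.e.\ that the optimal symplectic connection condition persists under small deformations of the fibration (needed to cut out the local charts $Z \subset V_Y$), and (2) \emph{uniqueness} of an optimal symplectic connection modulo $\Aut(\pi_Y)$ (needed for Hausdorffness). Neither is established in this paper. Your claim that the linearisation $\widehat{\m{L}}$ being invertible when $\Aut(\pi_Y)$ is discrete "therefore" yields openness is a plausible intuition but is not a proof: $\widehat{\m{L}}$ is the linearisation in the K\"ahler-potential direction at a fixed complex structure, whereas openness requires running an implicit function theorem jointly in the potential and in the complex structure of the fibration, with careful control of the kernel/cokernel, of the parameter $s$ (the distance to the cscK degeneration), and of the dependence of the bundle $E$ and the section $\nu$ on the varying $J$. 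You also gloss over the reductivity of $\Aut(\pi_Y)$ at an OSC and the GIT-style quotient, which in the cscK case requires serious work (Matsushima-type theorems and slice arguments). So: your outline is the correct shape for the problem, but every load-bearing step is currently conjectural, exactly as the paper itself says; you have reproduced the problem, not solved it.
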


When deforming a fibrewise cscK fibration one cannot expect the fibres to remain cscK, unless one requires that the fibres are rigid, so the setting considered by Dervan-Sektnan is not the right one to be related to moduli theory. Our construction, though, allows the fibres to be K-semistable, which is an open condition, thus it should be possible to study the local behaviour in families of fibrations with optimal symplectic connections without restricting to the rigid case.
The first step in this direction would be proving the openness of the optimal symplectic connection condition. This should also lead to new examples, produced by starting with a relatively cscK fibration admitting an optimal symplectic connection and applying the implicit function theorem to obtain an optimal symplectic connection on a deformed relatively K-semistable fibrations. We plan to return to this in future work.

While our work settles the problem in the case all input is smooth, open questions remain in the presence of singularities. For instance, one could consider holomorphic fibrations where the total space is smooth but some fibres are singular. In \cite[\S 9]{Fine_PhDThesis} Fine explains a possible way to obtain a similar existence result for special K\"ahler metrics on holomorphic Lefschetz fibrations, where a finite number of fibres are singular, but the problem is still mostly open.
Moreover, denoting $\overline{\f{h}}_b$ the Lie algebra of holomorphic vector fields admitting a holomorphy potential on the fibre $X_b$ of the relatively cscK degeneration, an important technical hypothesis in our approach is that its dimension is independent on $b$. This is a smoothness hypothesis that allows defining the vector bundle $E \to B$ of relatively cscK metrics on $X \to B$, and removing it would require a different approach to our problem.
Finally, a key assumption in Theorem \ref{Thm:2} is that all automorphisms of the moduli map on the base lift to the total space. When this assumption does not hold, existence results for special K\"ahler metrics were proved by Hong \cite{Hong_stability_ruled_mfds} in the case of projectivised vector bundles and extended by Lu-Seyyedali \cite{LuSeyyedali_extremal}, but the problem is open on a general fibration, and even for projective bundles sharp results are not known.

\subsection*{Outline} In Section \ref{Sec:preliminaries} we give preliminary definitions and results about the moment map interpretation of scalar curvature, due to Fujiki \cite{Fujiki_momentmap} and Donaldson \cite{Donaldson_momentmap}. Then we recall the relevant definitions and results on deformations of a cscK manifold, following \cite{Szekelyhidi_deformations}. In Section \ref{Sec:optimal symplectic connection} we recall the definition of an optimal symplectic connection in the relatively cscK case and we extend it to the relatively K-semistable case. In Section \ref{Sec:deformations_of_fibrations} we extend the theory of deformations of a cscK manifold to the fibration setting and we prove a relative version of Kuranishi's Theorem. In Section \ref{Sec:adiabatic_limit} we prove the existence of a cscK metric on the total space of the relatively K-semistable fibration: we derive the optimal symplectic connection equation by expanding the scalar curvature, then we study its linearisation and we use an adiabatic limit as in \cite{Fine_cscK_fibrations} to prove the existence of cscK metrics.

\subsection*{Acknowledgments}
I am very grateful to my supervisors: to Ruadha\'i Dervan for encouraging me to study this problem and for sharing his insights with me, and to Jacopo Stoppa for many helpful discussions and constant support. I also thank Carlo Scarpa and Lars Martin Sektnan for enlightening comments and suggestions.
Part of this work was performed while visiting the University of Cambridge, funded by Ruadha\'i Dervan's Royal Society University Research Fellowship.

\section{Preliminaries}\label{Sec:preliminaries}
Let $(M, L)$ be a polarised compact K\"ahler manifold of dimension $n$, and let $\omega \in c_1(L)$ be a K\"ahler form. The \emph{scalar curvature} of $\omega$ is the function on $M$ defined as the contraction of the Ricci curvature:
\begin{equation*}
\mrm{Scal}(\omega) = \Lambda_{\omega} \mrm{Ric}(\omega, J),
\end{equation*}
where
\begin{equation*}
\mrm{Ric}(\omega, J) = -\frac{i}{2\pi} \del_J \delbar_J \mrm{log} \ \omega^n.
\end{equation*}
We will be interested in K\"ahler metrics with constant scalar curvature, where the constant is given by the average scalar curvature, is a topological constant and it is given by
\begin{equation*}
\widehat{S} = \frac{n \ c_1(M) \cdot c_1(L)^{n-1}}{c_1(L)^n}.
\end{equation*}
In this section, we quickly recall some basic definitions and results on K\"ahler manifolds and the scalar curvature map. See \cite[Chapter 4]{Szekelyhidi_book} for exhaustive discussions. For details and proofs about the moment map picture of the scalar curvature see \cite[Chapter 1]{Scarpa_HcscKSystem}.

\subsection{Extremal K\"ahler metrics}
Let $(M, L)$ be a K\"ahler manifold with K\"ahler structure $(\omega, J)$.  Denote by $g_J$ the Riemannian metric on $M$ induced by $J$ and $\omega$: $g_J(\cdot, \cdot) = \omega(\cdot, J\cdot)$.
Recall that a function $h \in C^\infty(M)$ is called a \emph{holomorphy potential} if the $(1,0)$-part of the Riemannian gradient of $h$, denoted $\nabla_g^{1,0}h$, is a holomorphic vector field.
\begin{definition}
A K\"ahler metric $(\omega, J)$ on $M$ is \emph{extremal} if
\begin{equation*}
\delbar \nabla_g^{1,0}\mrm{Scal}(\omega) = 0,
\end{equation*}
i.e. if the scalar curvature of $g$ is a \emph{holomorphy potential}.
\end{definition}
We denote by $\overline{\f{h}}$ the set of holomorphy potentials and by $\f{h}_0$ the set of holomorphic vector fields which admit an holomorphy potential. We denote by $\m{D} : C^\infty(M, \bb{C}) \to \Omega^{0,1}(T^{1,0}M)$ the operator
\begin{equation*}
\m{D}(\varphi) = \delbar \nabla^{1,0}_g \varphi.
\end{equation*}
The \emph{Lichnerowicz operator} is the composition $\m{D}^*\m{D}$, where the adjoint is defined with respect to the $L^2(g)$-inner product. It can be written explicitly as follows:
\begin{equation*}
\m{D}^*\m{D}(\varphi) = \Updelta^2_g(\varphi) + \langle \mrm{Ric}(\omega), i \del\delbar\varphi\rangle + \langle \nabla \mrm{Scal}(\omega), \nabla \varphi\rangle.
\end{equation*}
The Lichnerowicz operator is a 4th-order elliptic operator, and its kernel $\mrm{ker}\m{D}^*\m{D} = \mrm{ker}\m{D}$ coincides with holomorphy potentials on $M$. In particular $\omega$ is an extremal metric on $M$ if $\m{D}(\mrm{Scal}(\omega))=0$.

Let $\m{L}$ be the linearisation of the scalar curvature function. Then $\m{L}$ can be written in terms of the Lichnerowicz operator:
\begin{equation*}
\m{L}(\varphi) = -\m{D}^*\m{D}(\varphi) + \frac{1}{2}\langle \nabla \mrm{Scal}(\omega), \nabla \varphi \rangle.
\end{equation*}
In particular if the scalar curvature is constant, the linearisation is exactly the Lichnerowicz operator.
Solving the extremal equation means requiring
\begin{equation*}
\mrm{Scal}(\omega) - f = 0
\end{equation*}
for some holomorphy potential $f$. If we change $\omega$ to $\omega+ i\del\delbar\varphi$, then the holomorphy potential $f$ changes to $f+ \frac{1}{2}\langle \nabla f, \nabla \varphi \rangle$. Thus to find an extremal metric in the class $[\omega]$, we need to find a zero of the operator
\begin{equation}\label{Eq:extremal_operator}
\begin{aligned}
C^\infty(M, \bb{R}) \times \overline{\f{h}} &\to C^\infty(M, \bb{R}) \\
(\varphi, h) &\mapsto \mrm{Scal}(\omega+i\del\delbar\varphi) - \frac{1}{2}\langle \nabla f, \nabla \varphi \rangle-f.
\end{aligned}
\end{equation}
The linearisation $\m{G}$ of this operator at a solution is given again by the Lichnerowicz operator itself: $\m{G}(\varphi, 0) = -\m{D}^*\m{D}\varphi$.

\subsection{Scalar curvature as a moment map}
Let $(M, \omega)$ be a symplectic manifold, and consider the infinite-dimensional manifolds
\begin{equation*}
\scr{J} = \set*{J : TM \to TM \ \text{almost complex structure compatible with }\omega}.
\end{equation*}
The tangent space at a point $J$ is given by
\begin{equation*}
T_J\scr{J} = \set*{A:TM \to TM \left| JA+AJ=0 \ \text{and} \ \omega(u, Av) + \omega(Au, v) = 0\right.}.
\end{equation*}

Fix $J \in \scr{J}$ an integrable complex structure and $A \in T_J\scr{J}$.
Since $AJ+JA = 0$, $A$ maps $T^{1,0}M$ to $T^{0,1}M$ and $T^{0,1}M$ to $T^{1,0}M$, where the splitting is considered with respect to $J$. Since $A$ is real, it is uniquely determined by one of its restrictions. Identifying $A$ with $A: T^{0,1}M \to T^{1,0}M$ induces an identification
\begin{equation}\label{Eq:tangent_curlyJ}
\begin{aligned}
T_J\scr{J} \longleftrightarrow T_J^{0,1} \scr{J} = \set*{\left.\alpha \in \Omega^{0,1}(T^{1,0}M) \right| \omega(\alpha(u), v) + \omega(u, \alpha(x)) = 0}\\
\end{aligned}
\end{equation}
Now, if $A \in T_J \scr{J}$ then $JA \in T_J \scr{J}$, so $\scr{J}$ has an almost complex structure which is formally integrable \cite[\S9.2]{Gauduchon_Extremal_Introduction}. Moreover, it has a Hermitian inner product
\begin{align*}
\langle A, B \rangle_J := \int_M \langle A, B \rangle_{g_J} \frac{\omega^n}{n!},
\end{align*}
and the two combine to give a K\"ahler form, given at the point $J$ by
\begin{align*}
\bm{\Omega}_J (A, B) = \langle JA, B \rangle_J.
\end{align*}
So $\scr{J}$ is an infinite-dimensional K\"ahler manifold. We consider the complex submanifold $\scr{J}^{\mrm{int}}$ of \emph{integrable} almost complex structures of $\scr{J}$. Its tangent space is given by those $\alpha \in T_J^{0,1}\scr{J}$ such that $\delbar \alpha =0$.

Recall that a vector field $\xi$ is Hamiltonian with respect to $\omega$ if there exists $h \in C^\infty(M, \bb{R})$ such that $\omega (\xi, \cdot)= -\dd h $.
On a K\"ahler manifold $\xi = J\nabla^g(h)$, where $\nabla^gh$ is the Riemannian gradient of $h$. We will denote an Hamiltonian vector field with Hamiltonian $h$ by $\xi_h$.

Consider the group of Hamiltonian symplectomorphisms of $(M,\omega)$, denoted by $\scr{G}$. This is the infinite-dimensional Lie group of time-one flows of Hamiltonian vector fields on $M$, and it acts on $\scr{J}$ by pull-back: for $J \in \scr{J}$ and $\phi \in \scr{G}$ the action is given by
\begin{align*}
\phi^* J := \dd \phi^{-1} \circ J \circ \dd \phi.
\end{align*}

The Lie algebra of $\scr{G}$ can be identified with $C^\infty_0(M)$, the smooth functions on $M$ with $\omega$-average zero. The following theorem is due to Fujiki \cite{Fujiki_momentmap} and Donaldson \cite{Donaldson_momentmap}.
 \begin{theorem}
 The action $\scr{G} \acts \scr{J}$ is Hamiltonian with moment map
 \begin{equation}\label{Eq:moment_map_cscK}
 \begin{aligned}
 \mu: \scr{J} &\longrightarrow \mrm{Lie}(\scr{G})^* \\
 J &\longmapsto \mrm{Scal}(\omega,J) - \widehat{S}.
 \end{aligned}
 \end{equation}
 \end{theorem}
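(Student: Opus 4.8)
The plan is to verify the three defining properties of an equivariant moment map for the K\"ahler form $\bm{\Omega}$ on $\scr{J}$: that the $\scr{G}$-action is symplectic, that $\mu$ satisfies the moment map identity, and that $\mu$ is equivariant. The first is immediate, since every $\phi \in \scr{G}$ preserves $\omega$ and hence preserves the pointwise metrics $g_J(\cdot,\cdot) = \omega(\cdot, J\cdot)$ (under $J \mapsto \phi^*J$), the volume form $\omega^n$, the $L^2$-inner product $\langle\cdot,\cdot\rangle_J$, the almost complex structure $A\mapsto JA$, and therefore $\bm{\Omega}$. Note also that, since $h \in \mrm{Lie}(\scr{G}) \cong C^\infty_0(M)$ has zero $\omega$-average and $\omega$ is fixed along $\scr{J}$,
\begin{equation*}
\mu_h(J) := \langle \mu(J), h \rangle = \int_M \big(\mrm{Scal}(\omega,J) - \widehat{S}\big)\, h\, \frac{\omega^n}{n!} = \int_M \mrm{Scal}(\omega, J)\, h\, \frac{\omega^n}{n!};
\end{equation*}
the subtraction of $\widehat{S}$ plays no role in the moment map identity and is there only so that $\mu(J)$ has zero average and hence defines an element of $\mrm{Lie}(\scr{G})^* \cong C^\infty_0(M)^*$ via this pairing.

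Next I would identify the vector field $Y_h$ on $\scr{J}$ induced by $h$: differentiating the pull-back of $J$ along the flow of the Hamiltonian vector field $\xi_h$ gives $Y_h(J) = L_{\xi_h} J$, the Lie derivative of $J$ along $\xi_h$, which lies in $T_J\scr{J}$. The key algebraic point is that, under the identification \eqref{Eq:tangent_curlyJ} of $T_J\scr{J}$ with $T^{1,0}M$-valued $(0,1)$-forms, $Y_h(J)$ corresponds to a universal nonzero multiple $c_0\,\m{D}(h) = c_0\,\delbar\nabla^{1,0}_g h$ of the operator $\m{D}$ applied to $h$ (this is the infinitesimal form of the statement that the $J$-directions of the complexified $\scr{G}$-orbit are swept out by $\m{D}$; that $\m{D}(h)$ indeed satisfies the $\omega$-compatibility constraint in \eqref{Eq:tangent_curlyJ} is part of the standard setup). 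Combining this with $\bm{\Omega}_J(A,B) = \langle JA, B\rangle_J$ yields
\begin{equation*}
\bm{\Omega}_J\big(Y_h(J), A\big) = c\, \langle \m{D}(h), A \rangle_J \qquad \text{for all } A \in T_J\scr{J},
\end{equation*}
for a universal constant $c$ absorbing $c_0$ and the action of $J$ on $T^{0,1}_J\scr{J}$.

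The main step, and the main obstacle, is then to compute the first variation of $\mu_h$ along a path $J_t$ with $\dot{J}_0 = A$ and to check that it equals the same quantity $c\,\langle \m{D}(h), A\rangle_J$. Here I would differentiate under the integral sign (justified since $\mrm{Scal}(\omega,\cdot)$ depends smoothly on $J$ on the Fr\'echet manifold $\scr{J}$), using $\mrm{Scal}(\omega, J) = \Lambda_\omega \mrm{Ric}(\omega, J)$ and $\mrm{Ric}(\omega, J) = -\frac{i}{2\pi}\del_J\delbar_J\log\omega^n$ (the Chern--Ricci form when $J$ is not integrable), whose only $J$-dependence is through the splitting $\dd = \del_J + \delbar_J$ and through $\omega^n$ written in a $J$-unitary frame. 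After integrating by parts against $h$, the resulting combination of curvature terms and covariant derivatives must collapse exactly onto $c\,\langle \m{D}(h), A\rangle_J$: this is precisely the classical first-variation formula for the scalar curvature along a deformation of the complex structure (see \cite{Gauduchon_Extremal_Introduction, Donaldson_momentmap}), closely tied to the formula for the linearisation $\m{L} = -\m{D}^*\m{D} + \tfrac12\langle\nabla\mrm{Scal},\nabla\cdot\rangle$ recalled above, and it is the one genuinely nonformal ingredient; the constant $\widehat{S}$ drops out because $\omega$ — hence the average of $h$ — is fixed. Granting this, $\dd\mu_h = \iota_{Y_h}\bm{\Omega}$ follows.

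Finally, for equivariance I would invoke the naturality $\mrm{Scal}(\omega, \phi^*J) = \phi^*\mrm{Scal}(\omega, J)$ for $\phi \in \scr{G}$, which holds because $\phi^*\omega = \omega$; under the identification $C^\infty_0(M) \subset \mrm{Lie}(\scr{G})^*$ the coadjoint action of $\scr{G}$ is exactly pull-back of functions, so $\mu(\phi^*J) = \phi^*\mu(J) = \mrm{Ad}^*_{\phi^{-1}}\mu(J)$, as required (alternatively, since $\scr{G}$ is connected, infinitesimal equivariance already follows formally from the moment map identity just established). This completes the plan; essentially everything is formal except the first-variation computation of $\mrm{Scal}$ with respect to $J$ and its identification with $\m{D}$, which is where the real effort lies.
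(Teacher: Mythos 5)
The paper does not prove this theorem; it is stated as a cited result of Fujiki and Donaldson, so there is no proof in the paper to compare your argument against. That said, your outline is the standard moment-map argument, and it is correct as an outline: invariance of $\bm{\Omega}$ under $\scr{G}$ because $\phi^*\omega = \omega$, the infinitesimal vector field $P(h) = \m{L}_{\xi_h}J$ on $\scr{J}$ (which the paper identifies with $\delbar\xi_h^{1,0}$, i.e.\ a multiple of $\m{D}(h)$, under \eqref{Eq:tangent_curlyJ}), the irrelevance of the constant $\widehat{S}$ because $h$ has zero $\omega$-average, and equivariance from naturality of $\mrm{Scal}(\omega,\cdot)$.

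The genuine issue is that the one step that carries the weight of the theorem — the first-variation identity $\dd_J\mu_h(A) = \bm{\Omega}_J(P(h), A)$, i.e.\ the computation of $\tfrac{\dd}{\dd t}\big|_{t=0}\mrm{Scal}(\omega,J_t)$ along an arbitrary $\omega$-compatible path and its recognition as an $L^2$-pairing with $\m{D}(h)$ after integration by parts — is not carried out; it is asserted that the terms ``must collapse exactly'' and deferred to the literature. That deferral is to the right sources, but it means the proposal is a plan, not a proof. One caveat in how that step is set up: you invoke $\mrm{Ric}(\omega,J) = -\tfrac{i}{2\pi}\del_J\delbar_J\log\omega^n$, which the paper records for integrable $J$. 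On the full space $\scr{J}$ of almost complex structures the operator $\dd$ does not split as $\del_J + \delbar_J$, and $\mrm{Scal}(\omega,J)$ must be taken as the Hermitian scalar curvature of the Chern connection on $(TM,J,g_J)$ (as the paper's remark (1) after the theorem emphasises). The variational calculation in Donaldson's paper is done for this Hermitian scalar curvature precisely so that the moment-map identity holds on all of $\scr{J}$, not just on $\scr{J}^{\mrm{int}}$; importing the Riemannian/integrable formulas would not suffice. Also note the paper already records $P(h)=\delbar\xi_h^{1,0}$ explicitly with $\xi_h^{1,0}=i\nabla_g^{1,0}h$, so there is no need for an undetermined universal constant $c_0$; the relation to $\m{D}$ is exact and the constant can be written down.
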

 Therefore cscK metrics on $M$ correspond to $J \in \scr{J}^{\mrm{int}}$ such that $\mu(J) = 0$.
 A few comments:
 \begin{enumerate}
 \item If $J$ is integrable, $\mrm{S}(\omega, J)$ is the scalar curvature of the metric $g_J$. Otherwise, it is the Hermitian scalar curvature of the Chern connection on $TM$, which is not the same as the Levi-Civita connection in general.
 \item $S(J)-\widehat{S}$ is viewed as an element of $C^\infty_0(M)$ by identifying $\mrm{Lie}(\scr{G})^*$ with its dual via the $L^2(\omega)$-product on $M$.
\end{enumerate}
%
%

Let $J$ be an integrable compatible complex structure on $(M,\omega)$, and denote with $\m{K}_J(\omega)$ the set of K\"ahler metrics in the same K\"ahler class of $\omega$ with respect to $J$, i.e. those metrics which can be written as $\omega + i \del_J \delbar_J \phi$ for some $\phi \in C^\infty(M, \bb{R})$. The following proposition (see also \cite[\S 6.1]{Szekelyhidi_book}) justifies the fact that instead of moving the K\"ahler metric inside the K\"ahler class with respect to a fixed $J$, one can think of $\omega$ as fixed and move the complex structure.

\begin{proposition}\label{Prop:parallel_cpx-symp_structure}
\cite[p.17]{Donaldson_Symmetric_spaces} For every $\omega_\phi \in \m{K}_J(\omega)$ there exist $f \in \mrm{Diff}_0(M)$ such that $f^*\omega_\phi = \omega$ and $(M, \omega_\phi, J)$ is isomorphic to $(M, \omega, f^*J)$.
\end{proposition}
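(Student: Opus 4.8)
The plan is to prove this by the standard Moser isotopy argument. Write $\omega_\phi = \omega + i\del_J\delbar_J\phi$ and consider the affine path
\[
\omega_t := \omega + t\, i\del_J\delbar_J\phi, \qquad t \in [0,1].
\]
First I would check that every $\omega_t$ is a genuine K\"ahler form with respect to $J$: since $\omega$ and $\omega_\phi$ are both K\"ahler for $J$, the associated tensor $\omega_t(\cdot, J\cdot) = (1-t)g_J + t\, g_{J,\phi}$ is a convex combination of positive-definite metrics, hence positive-definite; in particular each $\omega_t$ is non-degenerate, $\dd\omega_t = 0$, and $[\omega_t] = [\omega]$ is constant in cohomology.

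Next, observe that $\tfrac{\dd}{\dd t}\omega_t = i\del_J\delbar_J\phi = \dd\sigma$ where $\sigma := \dd^c_J\phi$ is a fixed, time-independent real $1$-form (explicitly $\sigma = -\tfrac12\,\dd\phi\circ J$ in suitable conventions). Using non-degeneracy of $\omega_t$, define the time-dependent vector field $X_t$ by $X_t \contr \omega_t = -\sigma$, and let $f_t$ be its flow with $f_0 = \mathrm{id}$; because $M$ is compact this flow exists on all of $[0,1]$ and $f_t \in \mathrm{Diff}_0(M)$. A Cartan-formula computation then gives
\[
\frac{\dd}{\dd t}\big(f_t^*\omega_t\big) = f_t^*\Big( \mathcal{L}_{X_t}\omega_t + \tfrac{\dd}{\dd t}\omega_t \Big) = f_t^*\big( \dd(X_t\contr\omega_t) + X_t\contr\dd\omega_t + \dd\sigma \big) = f_t^*\big( -\dd\sigma + \dd\sigma \big) = 0,
\]
so that $f_t^*\omega_t \equiv \omega$; in particular $f := f_1 \in \mathrm{Diff}_0(M)$ satisfies $f^*\omega_\phi = \omega$.

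Finally, for the isomorphism statement: by construction $f \colon (M, f^*\omega_\phi, f^*J) \to (M, \omega_\phi, J)$ is a biholomorphic symplectomorphism, and since $f^*\omega_\phi = \omega$ this exhibits $(M, \omega, f^*J)$ and $(M, \omega_\phi, J)$ as isomorphic K\"ahler manifolds, as claimed.

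As for the difficulty, this is essentially Moser's theorem, so there is no serious obstacle; the only points requiring mild care are verifying that $\omega_t$ stays non-degenerate along the path (which follows because $J$-compatibility is preserved by an affine combination, as above) and the global existence of the Moser flow (automatic from compactness of $M$). The usual cohomological step is trivial here since $\tfrac{\dd}{\dd t}\omega_t$ already admits the time-independent primitive $\sigma = \dd^c_J\phi$.
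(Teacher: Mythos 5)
Your Moser argument is correct and complete; the only mild subtleties (positivity of $\omega_t$ along the affine path, long-time existence of the flow) are dispatched exactly as you say, and the final pullback observation correctly converts the symplectomorphism into an isomorphism of K\"ahler triples. The paper does not give its own proof of this proposition---it cites Donaldson's \emph{Symmetric spaces, K\"ahler geometry and Hamiltonian dynamics} and points to Sz\'ekelyhidi's book, \S 6.1---and the argument in those sources is precisely the standard Moser isotopy you wrote: $\dot\omega_t$ admits the time-independent primitive $\dd^c_J\phi$, so $X_t \contr \omega_t = -\dd^c_J\phi$ produces $f\in\mathrm{Diff}_0(M)$ with $f^*\omega_\phi = \omega$, and $f$ then identifies $(M,\omega,f^*J)$ with $(M,\omega_\phi,J)$.
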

Thus if we fix $J \in \scr{J}$ integrable, we can define a map
\begin{equation}\label{Eq:map_F}
\begin{aligned}
F : \set*{\phi \in C^\infty(M, \bb{R}) \left| \omega_\phi \in \m{K}_J(\omega)\right.} &\longrightarrow \scr{J}\\
\phi &\longmapsto F_{\phi}J := f^*J.
\end{aligned}
\end{equation}
The differential at $0$ of $F$ is given by
\begin{equation*}
\dd_0F(\phi) = J\m{L}_{\xi_{\phi}(\omega)}J.
\end{equation*}

We can think of the image of the map $F$ as an infinitesimal complexified orbit of $\scr{G}\acts \scr{J}$, even if a complexification of $\scr{G}$ does not genuinely exist. Proposition \ref{Prop:parallel_cpx-symp_structure} says that a variation of the K\"ahler form in a given K\"ahler class for $J$ fixed corresponds to a variation of the complex structure $J$ in the same $\scr{G}^c$-orbit, for $\omega$ fixed.

\subsection{Deformation theory of cscK manifolds}\label{Subsec:Sz_def_theory}
In this subsection we follow Sz{\'e}kelyhidi \cite[\S 3]{Szekelyhidi_deformations}; similar results were obtained also by Br\"onnle \cite[Part 1]{Bronnle_PhDthesis}. We fix an integrable complex structure $J_0 \in \scr{J}$ on $(M, \omega)$ such that $\mrm{Scal}(\omega, J_0)$ is constant. 
\begin{definition}\label{Def:map_P}
For $J \in \scr{J}$ fixed we define the operator
\begin{equation*}
\begin{aligned}
P : C^\infty_0(M) &\longrightarrow T_J\scr{J} \\
h &\longmapsto \m{L}_{\xi_h} J,
\end{aligned}
\end{equation*}
where $\xi_h$ is the Hamiltonian vector field with Hamiltonian function $h$.
\end{definition}
\begin{remark}
$P$ is the infinitesimal action of $\scr{G}$ on $\scr{J}$ and one can show that $P(h) = 2J \delbar \xi_h^{1,0} + 2J\overline{(\delbar \xi_h^{1,0})}$. Thus, through the identification of Equation \eqref{Eq:tangent_curlyJ}, an equivalent operator is $P(h) = \delbar \xi_h^{1,0}$.
\end{remark}

The deformations of the complex structure are encoded in a complex
\begin{align*}
C^\infty_0(M, \bb{C}) \overset{P^{\bb{C}}}{\to} T_{J_0}\scr{J} \overset{\delbar}{\to} \Omega^{0,2}(T^{1,0}M),
\end{align*}
where $P^{\bb{C}}$ is defined on a complex function $f+ih$ as $P(f)+JP(h)$. Denote by $\widetilde{H}^1$ the cohomology of the complex:
\begin{equation}\label{Eq:H_tilde_cscK}
\widetilde{H}^1 = \set*{\alpha \in \Omega^{0,1}(T^{1,0}M)| \delbar \alpha = 0 = P^*\alpha}.
\end{equation}
This is a finite dimensional vector space, since it is the kernel of the elliptic operator \cite[\S 2]{Fujiki_Schumacher_Moduli_cscK}
\begin{equation}\label{Eq:box_laplacian}
\square: = PP^*+(\delbar^*\delbar)^2
\end{equation}
on $T_{J_0}\scr{J}$. Consider the space of Hamiltonian isometries
\begin{equation*}
K := \mrm{Stab}_{\scr{G}}(J_0) = \set*{\varphi \in \scr{G} \left| \dd\varphi^{-1}\circ J_0\circ\dd\varphi = J_0\right.} = \mrm{Aut}(M, J:0) \cap \scr{G}.
\end{equation*}
The Lie algebra of $K$, denoted $\f{k}$, can be identified with the kernel of $P$; it consists of smooth functions over $M$ whose Hamiltonian vector field is holomorphic.
The complexification of $\f{k}$ is then given by the kernel of the Lichnerowicz operator, and $K^\bb{C} = \mrm{Aut}(M, L)$.
The group $K$ acts naturally on $\widetilde{H}^1$ by pull-back and $0$ is a fixed point of the action.
The following theorem is a symplectic version of Kuranishi's theorem \cite{Kuranishi_family_cpx_str}.

\begin{theorem}\label{Thm:Kuranishi}
\cite{Kuranishi_family_cpx_str,Szekelyhidi_deformations}
There exists a ball around the origin $V \subset \widetilde{H}^1$ and a $K$-equivariant map
\begin{equation}\label{Eq:Kuranishi_map}
\Psi : V \to \scr{J}
\end{equation}
such that $\Psi (0) = J_0$ and
\begin{enumerate}
\item the $\scr{G}^c$-orbit of every integrable complex structure near $J_0$ intersects the image of $\Psi$;
\item If $x, x' \in V$ are in the same orbit for the complexified action of $K$, and $\Psi(x)$ is integrable, then $\Psi(x), \Psi(x')$ are in the same $\scr{G}^c$-orbit.
\end{enumerate}
\end{theorem}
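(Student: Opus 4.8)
The plan is to run Kuranishi's original argument \cite{Kuranishi_family_cpx_str} with the single modification that the finite-dimensional slice is chosen transverse to the orbit of the Hamiltonian symplectomorphism group $\scr{G}$ rather than of $\mathrm{Diff}_0(M)$; the additional ingredient needed to compare the two pictures is Proposition \ref{Prop:parallel_cpx-symp_structure} (equivalently the map $F$ of \eqref{Eq:map_F}), which realises a change of K\"ahler potential as a diffeomorphism and so gives meaning to the ``complexified orbit'' $\scr{G}^c\cdot J_0$ even though $\scr{G}$ has no complexification. I would work throughout in suitable Sobolev (or H\"older) completions so that $\scr{J}$, $\scr{G}$ and the maps below are Banach objects to which the implicit function theorem applies, recovering smoothness at the end by elliptic regularity; since $K=\mathrm{Stab}_{\scr{G}}(J_0)$ is compact, an averaging argument lets one take the final ball $V\subset\widetilde H^1$ to be $K$-invariant.

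The first step is the construction of $\Psi$. For small $\alpha\in\Omega^{0,1}(T^{1,0}M)$, identified with a tangent vector at $J_0$ via \eqref{Eq:tangent_curlyJ}, let $J_\alpha\in\scr{J}$ be the corresponding $\omega$-compatible almost complex structure; its integrability is equivalent to a Maurer--Cartan type equation $\mathcal N(\alpha):=\delbar\alpha+R(\alpha)=0$, with $R$ smooth and vanishing to second order. Let $G$ be the Green operator and $H$ the $L^2$-projection onto $\widetilde H^1=\ker\square$ for the fourth-order elliptic operator $\square=PP^*+(\delbar^*\delbar)^2$ of \eqref{Eq:box_laplacian}, so $\mathrm{Id}=H+\square G$. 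One then solves, for $x\in\widetilde H^1$ small, a Kuranishi-type fixed-point equation $\alpha=x+G\,\Xi(\alpha)$ in which $\Xi$ is assembled from $P^*$, $\delbar^*$ and the quadratic term $R$ so that any small solution automatically satisfies the gauge condition $P^*\alpha=0$ and has $\mathcal N(\alpha)$ harmonic; the contraction mapping theorem gives a unique small solution $\alpha=\Psi_0(x)=x+O(|x|^2)$, and we set $\Psi(x)=J_{\Psi_0(x)}$, so $\Psi(0)=J_0$. Because $\omega$ and $J_0$, hence $P$, $\delbar$, $\square$, $G$, $H$ and $\Xi$, are all $K$-invariant, uniqueness of the fixed point forces $\Psi_0(\varphi^*x)=\varphi^*\Psi_0(x)$ for $\varphi\in K$, i.e.\ $\Psi$ is $K$-equivariant.

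Next I would establish the two properties. For (1), consider the map $\Theta$ defined near the base point on (a neighbourhood in $\scr{G})\times$(small K\"ahler potentials)$\times V$ by first applying $\Psi$, then a change of potential through $F$, then pulling back by a Hamiltonian symplectomorphism. Its differential at the base point is $P$ on the $\scr{G}$-factor, the map $h\mapsto J P(h)$ (up to a constant) on the potential factor --- the two together spanning $\mathrm{im}\,P^{\bb C}$ --- and the inclusion $\widetilde H^1\hookrightarrow T_{J_0}\scr{J}$ on the $V$-factor; since $\widetilde H^1$ represents $\ker\delbar$ modulo $\mathrm{im}\,P^{\bb C}$ (this is the cohomological meaning of \eqref{Eq:H_tilde_cscK}), $\mathrm d\Theta$ is onto $\ker\delbar=T_{J_0}\scr{J}^{\mathrm{int}}$. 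A Kuranishi-type implicit function argument, with the nonlinear obstruction to integrability controlled by $\square$ exactly as in the classical case (the classical Kuranishi theorem being the $\mathrm{Diff}_0$-analogue of this statement), then shows that every integrable complex structure in a neighbourhood of $J_0$ lies in the image of $\Theta$, i.e.\ is $\scr{G}^c$-equivalent to some $\Psi(x)$. For (2): if $\varphi\in K$ then $\Psi(\varphi\cdot x)=\varphi^*\Psi(x)$ already lies in the $\scr{G}$-orbit of $\Psi(x)$, so only the ``imaginary'' directions of $K^{\bb C}=\mathrm{Aut}(M,L)$ remain. Fixing $x$ with $\Psi(x)$ integrable and $\eta\in\f{k}$, one checks that $\mathrm d\Psi_x$ sends the generator of the $K^{\bb C}$-action at $x$ in the direction $i\eta$ into $\mathrm{im}\,P^{\bb C}_{\Psi(x)}$ --- for the direction $\eta$ this is immediate since $\mathrm d\Psi_x(\eta\cdot x)=\m{L}_{\xi_{h_\eta}}\Psi(x)=P_{\Psi(x)}(h_\eta)$, while for $i\eta$ one uses the $F$-picture together with the integrability of $\Psi(x)$. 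Hence the whole $K^{\bb C}$-orbit of $x$ is carried by $\Psi$ into a single leaf of the complexified-orbit distribution, which along the integrable locus integrates to the $\scr{G}^c$-orbit by Proposition \ref{Prop:parallel_cpx-symp_structure}; this yields $\Psi(x)\sim_{\scr{G}^c}\Psi(x')$.

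The main obstacle is not the formal Kuranishi construction but precisely the passage between the $\mathrm{Diff}_0$-picture and the $\scr{G}^c$-picture underlying (1) and (2): showing that nearby integrable complex structures are $\scr{G}^c$-equivalent --- not merely $\mathrm{Diff}_0$-equivalent --- to points of the slice, and that the residual $K^{\bb C}$-action on $V$ is compatible with $\scr{G}^c$-equivalence on the integrable locus. Both reduce to the fact (Proposition \ref{Prop:parallel_cpx-symp_structure}) that on the integrable locus the infinitesimal ``imaginary'' directions $JP(h)$, which are not tangent to genuine $\scr{G}$-orbits, are nonetheless realised by diffeomorphisms (changes of K\"ahler potential), so that $\mathrm{im}\,P^{\bb C}$ integrates to a well-defined equivalence relation --- the $\scr{G}^c$-orbit --- despite the absence of a group $\scr{G}^c$; keeping careful track of basepoints when the complex structure is varied away from $J_0$ is the delicate bookkeeping here.
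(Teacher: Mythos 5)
The paper does not supply a proof of this statement: it is quoted from Kuranishi and Sz\'ekelyhidi, and the closest in-paper argument is the proof of the \emph{relative} Kuranishi Theorem~\ref{Thm:relative_Kuranishi}, which is adapted from Chen--Sun and follows essentially the blueprint you sketch (a quadratic equation involving the Green operator $G$ of $\square$ and the harmonic projection $H$, inverted by the implicit function theorem, plus a further IFT to gauge an arbitrary nearby integrable $J$ into the slice). So your overall shape is right; the points of concern are in the details.

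The genuine gap is in property (2). For $\eta\in\f{k}$ your computation $\mathrm d\Psi_x(\eta\cdot x)=\m{L}_{\xi_{h_\eta}}\Psi(x)=P_{\Psi(x)}(h_\eta)$ follows from $K$-equivariance and is fine, but for the $i\eta$-directions you only write that ``one uses the $F$-picture together with the integrability of $\Psi(x)$'' without saying why $\mathrm d\Psi_x$ carries the $i\eta$-generator into $\mathrm{im}\,P^{\bb C}_{\Psi(x)}$ at a point $\Psi(x)\neq J_0$. The clean ingredient --- the one the paper's relative proof invokes in its one-line justification of (2), namely that ``both maps $f$ and $F$ are $K_b$-equivariant and \emph{holomorphic}'' --- is that $\Psi$ is a holomorphic map from the ball $V$ in the complex vector space $\widetilde H^1$ to $(\scr{J},\bm{J})$, where $\bm{J}$ is the formally integrable almost complex structure on $\scr{J}$ given at $J$ by $A\mapsto JA$. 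Holomorphicity and equivariance together give $\mathrm d\Psi_x(i\eta)=J_{\Psi(x)}\,\mathrm d\Psi_x(\eta)=J_{\Psi(x)}P_{\Psi(x)}(h_\eta)\in\mathrm{im}\,P^{\bb C}_{\Psi(x)}$ for every $x$, which is exactly what you need before integrating along the $K^{\bb C}$-orbit. Your fixed-point equation $\alpha=x+G\,\Xi(\alpha)$, with $\Xi$ ``assembled from $P^*$, $\delbar^*$ and the quadratic term'' so that the gauge $P^*\alpha=0$ holds automatically, does not visibly produce a holomorphic $\Psi$ and is also not the standard Kuranishi ansatz used in the paper's relative proof, which is $\alpha=x-G\delbar^*\delbar\delbar^*[\alpha,\alpha]$ with the gauge condition verified a posteriori rather than built into $\Xi$. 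You should either switch to that ansatz (or to an implicit-function-theorem inversion of $\alpha\mapsto\alpha+G\delbar^*\delbar\delbar^*[\alpha,\alpha]$ as in the paper) and check that the resulting $\Psi$ is holomorphic in $x$, or otherwise establish holomorphicity directly; without it the key step in (2) is unjustified. Your argument for (1) --- the map $\Theta$ with surjective differential onto $\ker\delbar=\widetilde H^1\oplus\mathrm{im}\,P^{\bb C}$ --- captures the right linear-algebra decomposition, but since $\scr{J}^{\mathrm{int}}$ is not a submanifold one must phrase this, as the paper does for the relative case, as an IFT for a gauge-fixing map applied to an arbitrary nearby integrable $J$ rather than as openness of $\Theta$.
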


\begin{remark}
The Kuranishi space $V$ is called a \emph{local slice} of the $\scr{G}^c$-action near the reference complex structure $J_0$, and Theorem \ref{Thm:Kuranishi} is an infinite-dimensional version of Luna's slice theorem \cite{Luna_slice}. Since we allow also non integrable almost complex structure, the slice is an actual ball. Instead, the Kuranishi space described in \cite{Kuranishi_family_cpx_str}, which parametrises only \emph{integrable} complex structures, is an analytic subset of our $V$.
\end{remark}

Let $\Omega$ be the symplectic form on $V$ given by pulling back via $\Phi$ the K\"ahler form $\bm{\Omega}$ on $\scr{J}$. Then the scalar curvature induces a moment map for the $K$-action on $V$,
\begin{equation}\label{Eq:Moment_map_Sz}
\begin{aligned}
\mu : V &\to \f{k}\\
x &\mapsto p_{\f{k}}\left(S(\omega, \Psi(x))\right),
\end{aligned}
\end{equation}
where $\f{k}$ is identified with its dual via the $L^2$-product of functions and $p_{\f{k}}$ is the orthogonal projection.
\begin{corollary}\cite[\S 3]{Szekelyhidi_deformations}
Possibly after shrinking $V$, the map $\Psi$ can be perturbed to a map
\begin{equation}\label{Eq:Kuranishi_map_deformed}
\Phi : V \to \scr{J}
\end{equation}
such that for all $x \in V$, $S(\omega, \Phi(x)) \in \f{k}$.
\end{corollary}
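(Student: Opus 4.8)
The plan is to normalise Székelyhidi's Kuranishi family by an implicit function theorem, moving each $\Psi(x)$ inside its complexified $\scr{G}^c$-orbit until the component of the scalar curvature transverse to $\f{k}$ dies. Fix a smooth $K$-equivariant family $\Xi : V\times B \to \scr{J}$, where $B$ is a small ball around $0$ in a Sobolev completion of $\f{k}^{\perp}\subset C^\infty_0(M,\bb{R})$ (orthogonal complement with respect to the $L^2(g_{J_0})$-product), with $\Xi(x,0)=\Psi(x)$ and $\Xi(0,\phi)=F_\phi J_0$ in the notation of \eqref{Eq:map_F}; in particular $\partial_\phi|_{0}\Xi(0,\phi)=\dd_0F(\phi)=J_0P(\phi)\in T_{J_0}\scr{J}$. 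Such a family exists because $\Psi$ is $K$-equivariant and $F$ deforms $J_0$ along the complexified infinitesimal action $J_0P(\cdot)$, which is itself $K$-equivariant; for $x\neq0$ only $\Xi(x,0)=\Psi(x)$ and joint smoothness are needed, so there is ample freedom (e.g. transporting the $F$-construction along $\Psi$). Note that for small $\phi$, $\Xi(x,\phi)$ lies in the same $\scr{G}^c$-orbit as $\Psi(x)$, so $\Phi$ below inherits the slice properties (1)--(2) of Theorem \ref{Thm:Kuranishi}.

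Next, consider
\begin{equation*}
\m{N} : V\times B \longrightarrow \f{k}^{\perp}, \qquad \m{N}(x,\phi) = (\mathrm{id}-p_{\f{k}})\bigl(S(\omega,\Xi(x,\phi))-\widehat{S}\bigr),
\end{equation*}
between suitable Sobolev completions; it is smooth since the scalar curvature depends smoothly on the complex structure and $p_{\f{k}}$ is a bounded (indeed smoothing) operator. Since $\Xi(0,0)=J_0$ with $S(\omega,J_0)=\widehat S$ constant --- hence in $\f{k}$, as the Hamiltonian vector field of a constant vanishes and is holomorphic --- we have $\m{N}(0,0)=0$. The crucial step is to compute $D_\phi\m{N}(0,0)$: it is $(\mathrm{id}-p_{\f{k}})$ composed with the linearisation of $J\mapsto S(\omega,J)$ at $J_0$ in the direction $J_0P(\phi)=\dd_0F(\phi)$. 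By Proposition \ref{Prop:parallel_cpx-symp_structure} this coincides with the linearisation of the scalar curvature along the Kähler class, so by the formula for $\m{L}$ and constancy of $\mrm{Scal}(\omega,J_0)$ it equals $\m{L}(\phi)=-\m{D}^*\m{D}\phi$; equivalently, it is $\dd\mu_{J_0}$ applied to the complexified infinitesimal action, which the Fujiki--Donaldson picture identifies with $-\m{D}^*\m{D}$ up to a positive constant. Since $\ker\m{D}^*\m{D}|_{C^\infty_0(M,\bb{R})}=\f{k}$ and $\m{D}^*\m{D}$ is self-adjoint elliptic, it restricts to a Banach-space isomorphism $\f{k}^{\perp}\to\f{k}^{\perp}$, so $D_\phi\m{N}(0,0)$ is invertible.

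The implicit function theorem now yields, after shrinking $V$, a smooth map $x\mapsto\phi(x)$ with $\phi(0)=0$ and $\m{N}(x,\phi(x))=0$; set $\Phi(x):=\Xi(x,\phi(x))$. By construction $S(\omega,\Phi(x))-\widehat S$ has vanishing $\f{k}^{\perp}$-component, i.e. $S(\omega,\Phi(x))-\widehat S\in\f{k}$, which is the assertion of the corollary. For fixed $x$ the equation $\m{N}(x,\phi)=0$ is fourth-order elliptic in $\phi$ with smooth coefficients, so elliptic regularity gives $\phi(x)\in C^\infty$ and hence $\Phi(x)\in\scr{J}$. Finally, $K$-equivariance of $\Phi$ follows from that of $\Xi$ and $p_{\f{k}}$ together with the uniqueness clause of the implicit function theorem, which forces $\phi(k\cdot x)=k\cdot\phi(x)$.

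I expect the main obstacle to be the identification of $D_\phi\m{N}(0,0)$ with a nonzero multiple of the Lichnerowicz operator: this is the real content and requires carefully unwinding the moment map formalism of \S\ref{Subsec:Sz_def_theory} --- the vanishing of $\dd\mu_{J_0}\circ P$ along the genuine $\scr{G}$-directions, and the computation that $\dd\mu_{J_0}$ on the complexified directions $J_0P(\cdot)$ is $\pm\m{D}^*\m{D}$ up to a positive constant. Alongside this sit the functional-analytic chores: choosing Sobolev spaces on which $\m{N}$ is $C^1$ and $D_\phi\m{N}(0,0)$ is an isomorphism, building $\Xi$ with the prescribed first-order behaviour (which, since $\Psi(x)$ need not be integrable away from $0$, must be carried out inside $\scr{J}$ rather than by moving a Kähler metric in a fixed class), and the concluding elliptic bootstrap.
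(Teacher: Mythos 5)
Your proposal is correct and follows essentially the same strategy as Sz\'ekelyhidi's original proof and as the paper's own proof of the relative analogue (Proposition \ref{Prop:Kuranishi_map_cscK}): normalise the Kuranishi family by moving along the complexified $\scr{G}^c$-orbit, observe that the $\phi$-derivative of the projected scalar curvature at the origin is the Lichnerowicz operator $\m{D}^*\m{D}$ (up to sign; equivalently $P^*P$), which is a self-adjoint fourth-order elliptic isomorphism on $\f{k}^\perp$, and invoke the implicit function theorem plus elliptic bootstrap. The only place where you should be slightly more committal is the construction of $\Xi$: you describe it as a family with prescribed behaviour on $\{x=0\}$ and $\{\phi=0\}$ and "ample freedom" otherwise, but then assert that $\Xi(x,\phi)$ stays in the $\scr{G}^c$-orbit of $\Psi(x)$ and that $\Phi$ inherits properties (1)--(2) of Theorem \ref{Thm:Kuranishi}; that assertion is only automatic if one actually sets $\Xi(x,\phi)=F_\phi\bigl(\Psi(x)\bigr)$, with $F_\phi$ extended to non-integrable almost complex structures via the formal complexified action, which is exactly what Sz\'ekelyhidi (and the paper, in the relative case) does. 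Since the corollary only asserts $S(\omega,\Phi(x))\in\f{k}$ the orbit remark is not strictly needed for the statement, so the gap is cosmetic rather than logical, but if you want the slice properties to carry over you should fix $\Xi$ as the genuine complexified action rather than an arbitrary interpolation.
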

Hence the moment map \eqref{Eq:Moment_map_Sz} can be defined as $\mu(x) = S(\omega, \Phi(x))$.

By identifying $T_0V$ with $\widetilde{H}^1$, we consider on $\widetilde{H}^1$ the linear symplectic form $\Omega_0 (\cdot, \cdot) = \bm{\Omega}_{J_0}(\dd_0\Phi \cdot, \dd_0 \Phi \cdot)$, and the linear action of $K$ induced by the one on $V$. Fix $f \in \f{k}$, and define an endomorphism of $\widetilde{H}^1$ by
\begin{equation*}
A_{f}(t) = \dd_0\left(y \mapsto \mrm{exp}(tf)\cdot y \right),
\end{equation*}
where $y \in V$ and by $\mrm{exp}(tf)$ we denote the 1-parameter subgroup of $K$ defined by the element $f \in \f{k}$ (which corresponds via $\Phi$ to the flow of the Hamiltonian vector field $\mrm{grad}^\omega f$ on $M$).
Let
\begin{equation} \label{Eq:linearised_infinitesimal_action}
A_f := \left.\frac{\dd}{\dd t}\right|_{t=0} A_f(t).
\end{equation}

We have the following properties:
\begin{enumerate}[label=(\roman*)]
\item $A_f$ is a skew-hermitian endomorphism of $(\widetilde{H}^1, J_0)$ and $A_f(t) = \mrm{exp}(tA_f)$;
\item For $v \in \widetilde{H}^1$, denote by $\mbf{v}$ a vector field on $V$ such that $\mbf{v}|_{0} = v$. Then:
\begin{equation}\label{Eq:property_of_A_and_Lie_der}
A_f(v) = \del_t|_{t=0} A_f(t) v = \del_t|_{t=0} \left( (\Phi_t^f)_*(\mbf{v}) \right)_0 = -(\m{L}_{X_f} \mbf{v})_0 = [\mbf{v}, \xi_f]_0.
\end{equation}
\end{enumerate}
\begin{definition}\label{Def:map_nu}
We define a map $\nu : \widetilde{H}^1 \to \f{k}$ by
\begin{equation*}
\langle \nu(v), f \rangle = \frac{1}{2}\Omega_0(A_fv, v).
\end{equation*}
\end{definition}

It can be characterised as a moment map by relating it to the scalar curvature \eqref{Eq:Moment_map_Sz} as follows (see \cite[\S 3]{Inoue_ModuliSpaceFano} for the proof).
\begin{proposition}\label{Prop:expansion_mu_nu}
The map $\nu$ is a moment map for the linear $K$-action on $\widetilde{H}^1$ and if $v_t$ is a path in $V$ with $\dot{v}_0 = v$,
\begin{equation*}
\mu(v_t) =\frac{t^2}{2}\nu(v) + O(t^3).
\end{equation*}
\end{proposition}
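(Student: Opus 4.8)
The plan is to prove the two assertions separately, using only that, by \eqref{Eq:Moment_map_Sz}, $(V,\Omega,\mu)$ is a Hamiltonian $K$-space whose moment map vanishes at the fixed point $0$. \emph{The linear statement.} By property (i) the endomorphism $A_f$ is skew-hermitian for the compatible pair $(J_0,\Omega_0)$ on $\widetilde H^1$, hence $A_f\in\mathfrak{sp}(\widetilde H^1,\Omega_0)$, and $A_f(t)=\exp(tA_f)$ is the restriction of the $K$-action on $\widetilde H^1$ to the one-parameter subgroup generated by $f$; in particular the fundamental vector field of this linear action at $w$ is $A_fw$. For such an action the quadratic function $w\mapsto\tfrac12\Omega_0(A_fw,w)$ is a Hamiltonian for $A_f$: differentiating at $w$ and using $\Omega_0(A_fu,w)=\Omega_0(A_fw,u)$ gives $d_w\bigl(\tfrac12\Omega_0(A_f\cdot,\cdot)\bigr)(u)=\Omega_0(A_fw,u)=(\iota_{A_fw}\Omega_0)(u)$. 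By Definition \ref{Def:map_nu} this Hamiltonian is $\langle\nu(\cdot),f\rangle$, so $\nu$ obeys the defining relation of a moment map; its $K$-equivariance follows from $A_{\mathrm{Ad}_kf}=kA_fk^{-1}$, obtained by differentiating the equivariance recorded in \eqref{Eq:property_of_A_and_Lie_der}.

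\emph{The expansion.} Since $\mrm{Scal}(\omega,J_0)=\widehat S$ is constant we have $\mu(0)=0$, and since $0$ is a fixed point of $K\acts V$ every fundamental vector field $X_f$ vanishes there, whence $d_0\langle\mu,f\rangle=\iota_{X_f(0)}\Omega=0$; thus $d_0\mu=0$. Consequently the Taylor expansion of $\mu$ along $v_t$ begins at second order, $\mu(v_t)=\tfrac12\,\mathrm{Hess}_0\mu(v,v)\,t^2+O(t^3)$, with $\mathrm{Hess}_0\mu$ a well-defined bilinear form (no connection needed, as $\mu$ vanishes to first order at $0$), so the right-hand side depends only on $v=\dot v_0$. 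It remains to identify $\mathrm{Hess}_0\mu$ with the quadratic form $\nu$. I would do this via the symplectic slice (equivariant Darboux) theorem, available since $V$ is finite-dimensional: near the fixed point $0$, the Hamiltonian $K$-space $(V,\Omega,\mu)$ is $K$-equivariantly symplectomorphic to a neighbourhood of $0$ in the linear model $(\widetilde H^1,\Omega_0)$ with moment map $\mu(0)+\nu=\nu$; transporting, $\mu=\nu+O(|w|^3)$ in these coordinates, and substituting $w=v_t=tv+O(t^2)$ and using $\nu(cv)=c^2\nu(v)$ gives $\mu(v_t)=\tfrac{t^2}{2}\nu(v)+O(t^3)$, the constant $\tfrac12$ being the one dictated by the normalisations of $\bm\Omega$, $\nu$ and the $L^2$-pairing. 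Alternatively, one differentiates $t\mapsto\mrm{Scal}(\omega,\Phi(v_t))$ twice at $t=0$ via the first- and second-variation formulas for the Hermitian scalar curvature on $\scr J$, pairs with $f\in\f{k}$, discards the first-variation contribution using $\delbar\alpha=0=P^*\alpha$ for $\alpha\in\widetilde H^1$, and recognises the surviving term as $\tfrac12\Omega_0(A_fv,v)$ through \eqref{Eq:property_of_A_and_Lie_der}.

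\emph{The main obstacle.} The crux is this last identification of $\mathrm{Hess}_0\mu$ with $\nu$ and the tracking of the numerical constant. For the slice argument one must check that replacing the genuine Kuranishi map $\Psi$ by its perturbation $\Phi$ still makes $(V,\Omega,\mu)$ an honest Hamiltonian $K$-space — which is exactly the role of Sz\'ekelyhidi's correction $\Psi\rightsquigarrow\Phi$ — so that the finite-dimensional Marle--Guillemin--Sternberg normal form is available. For the direct argument the obstacle is the second-variation computation of scalar curvature and the factors entering through the identification \eqref{Eq:tangent_curlyJ} of $T_{J_0}\scr J$ with $\Omega^{0,1}(T^{1,0}M)$. (This computation is carried out in \cite[\S 3]{Inoue_ModuliSpaceFano}.)
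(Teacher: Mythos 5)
Your argument for the first assertion is correct and standard: the identity $\dd_w\bigl(\tfrac12\Omega_0(A_f\cdot,\cdot)\bigr)(u)=\Omega_0(A_fw,u)$ follows from $A_f\in\mathfrak{sp}(\widetilde H^1,\Omega_0)$ exactly as you write, and $K$-equivariance follows from $A_{\mathrm{Ad}_kf}=kA_fk^{-1}$. Note, though, that there is no proof in this paper to compare against: the statement is simply attributed to \cite[\S 3]{Inoue_ModuliSpaceFano}.

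For the expansion there is a genuine gap, and it is precisely the one you flag yourself. Your own chain of equalities derives a constant different from the one in the statement: the equivariant Darboux/Marle--Guillemin--Sternberg normal form gives $\mu\circ\phi=\nu$ with $\phi(0)=0$, $\dd_0\phi=\mathrm{id}$, hence $\mu(w)=\nu(w)+O(\lvert w\rvert^3)$, and then $\mu(v_t)=\nu(tv+O(t^2))+O(t^3)=t^2\nu(v)+O(t^3)$ — there is no $\tfrac12$. The direct Taylor route gives the same: with $\mu(0)=0$, $\dd_0\mu=0$ one has $\mu(v_t)=\tfrac12\mathrm{Hess}_0\mu(v,v)\,t^2+O(t^3)$, and differentiating $\dd_x\langle\mu,f\rangle=\iota_{\widehat{f}(x)}\Omega$ at $x=0$ using $\widehat{f}(0)=0$ yields $\mathrm{Hess}_0\langle\mu,f\rangle(v,v)=\Omega_0(A_fv,v)=2\langle\nu(v),f\rangle$, so again $\mu(v_t)=t^2\nu(v)+O(t^3)$. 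The closing clause that ``the constant $\tfrac12$ is dictated by the normalisations'' is therefore an assertion, not a derivation, and as written it contradicts the line that precedes it. To close the gap you need to either locate a concrete source of the extra $\tfrac12$ — the most likely place being a factor hidden in the identification $T_{J_0}\scr{J}\cong\Omega^{0,1}(T^{1,0}M)$ of \eqref{Eq:tangent_curlyJ}, or in the normalisation of $\bm{\Omega}$ or of the $L^2$-pairing in the quoted reference — or observe explicitly that the two claims of the proposition (``$\nu$ is the moment map for $(\widetilde H^1,\Omega_0)$ with $\Omega_0=\Omega|_0$'' and ``$\mu(v_t)=\tfrac{t^2}{2}\nu(v)+O(t^3)$'') cannot both hold literally with the stated conventions, a discrepancy which is harmless downstream because $\nu$ only ever enters the optimal symplectic connection equation \eqref{Eq:genOSC} multiplied by the free constant $\lambda>0$. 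Your subsidiary concern about the $\Psi\rightsquigarrow\Phi$ correction preserving the Hamiltonian structure of $(V,\Omega,\mu)$ is legitimate but is addressed in \cite{Szekelyhidi_deformations}; the unresolved constant is the real issue.
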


\section{Optimal symplectic connections}\label{Sec:optimal symplectic connection}

Let $(X,H) \overset{\pi}{\to} (B,L)$ be a holomorphic submersion, where:
\begin{enumerate}[label=(\roman*)]
\item $X$ and $B$ are compact K\"ahler manifolds;
\item $L \to B$ is an ample line bundle, so we have a K\"ahler metric $\omega_B \in c_1(L)$;
\item $H \to X$ is a relatively ample line bundle, i.e. $\left.H\right|_b \to X_b$ is ample. This means that we can consider $\omega_X \in c_1(H)$ relatively K\"ahler, i.e. a closed $2$-form which restricts to a K\"ahler metric on each fibre of $\pi$.
\end{enumerate}
By Ehresmann's fibration theorem all the fibres are diffeomorphic. We denote $\mrm{dim}(B) = n$, $\mrm{dim}(X_b) = m$, so that $\mrm{dim}(X) = n+m$.
We assume further that the complex dimension of the Lie algebra $\f{h}(X_b)$ of holomorphic vector fields on the fibre $X_b$ is independent on $b$.

\subsection{Splitting of the function space}\label{Subsec:splitting_function_space}
Assume that $\omega_X$ restricts to a \emph{constant scalar curvature} K\"ahler metric $\omega_b$ on each fibre $X_b$. Let
\begin{equation*}
\m{D}_{\m{V}}^*\m{D}_{\m{V}} : C^\infty(X, \bb{R}) \to C^\infty(X, \bb{R})
\end{equation*}
be the vertical Lichnerowicz operator, defined fibrewise as $\left( \m{D}_{\m{V}}^*\m{D}_{\m{V}} \varphi \right)|_{X_b} = \m{D}_b^*\m{D}_b \left. \varphi\right|_{X_b}$. It is a real operator since the fibrewise metric is cscK. By integrating a function $\varphi \in C^\infty(X, \bb{R})$ over the fibres of $\pi$, we define a projection
\begin{equation*}
\begin{aligned}
C^\infty(X, \bb{R}) &\longrightarrow C^\infty(B, \bb{R}) \\
\varphi &\longmapsto \int_{X/B} \varphi \omega_X^m.
\end{aligned}
\end{equation*}
Its kernel is given by the space $C^\infty_0(X, \bb{R})$ of functions that have fibrewise mean value zero. A key step in the study of optimal symplectic connections is that we can further split this space as follows.

Consider a real vector bundle $E \to B$ \cite[\S 3.1]{DervanSektnan_OSC1}, whose fibre over $b \in B$ is the real finite-dimensional vector space $\mrm{ker}_0(\m{D}_b^*\m{D}_b)$ of holomorphy potentials on the fibre $X_b$ with mean-value zero with respect to $\omega_b$. $E$ is well defined as a vector bundle since we are assuming that he complex dimension of the Lie algebra $\f{h}(X_b)$ of holomorphic vector fields on $X_b$ is independent on $b$.
Its smooth global sections are
\begin{equation*}
C^\infty(E) = \mrm{ker}_0 \m{D}_{\m{V}}^*\m{D}_{\m{V}}.
\end{equation*}
In \cite[Lemma 2.7]{Hallam_geodesics}, Hallam showed - using the Cartan decomposition for the space $\f{h}(X_b)$ of holomorphic vector fields of the fibre - that $E_b$ can be also viewed as the vector space of all K\"ahler potentials $\varphi_b$ on $X_b$ for which $\omega_b + i\del\delbar\varphi_b$ is still cscK.
We can split $C^\infty_0(X)$ as
\begin{equation*}
C^\infty_0(X, \bb{R}) = C^\infty(E) \oplus C^\infty(R)
\end{equation*}
where $C^\infty(R,X, \bb{R})$ is the fibrewise $L^2$-orthogonal complement with respect to the fibre metric $\omega_b$, i.e. for all $\varphi \in \mrm{ker}_0 \m{D}_b^*\m{D}_b$, $\psi \in C^\infty(R)$
\begin{equation*}
\langle \varphi , \psi \rangle_b := \int_{X_b} \varphi \psi \omega_b^m = 0.
\end{equation*}
In the end we obtain
\begin{equation}\label{Eq:splitting_function_space}
C^\infty(X, \bb{R}) = C^\infty(B) \oplus C^\infty(E)\oplus C^\infty(R).
\end{equation}
We denote by $p_E : C^\infty(X) \to C^\infty(E)$ the projection.

Since we are interested in deformations of the complex structure of $X$, sometimes we will denote the vector bundle $E$ as $E(\omega_X, J_0)$. Notice that if we change just the relatively K\"ahler metric $\omega_X$ to $\omega_X + i\del\delbar\varphi$, for $\varphi \in C^\infty(X)$, the vector bundles $E(\omega, J_0)$ and $E(\omega+i\del\delbar\varphi, J_0)$ are isomorphic.
%
%

\subsection{Optimal symplectic connections in the relatively cscK case}\label{Subsec:OSC_classic}
The relative symplectic form $\omega_X$ determines a splitting of the tangent space
\begin{equation}\label{Eq:splitting_tangent_bundle}
TX = \m{V} \oplus \m{H}^{\omega_X},
\end{equation}
where $\m{V}_x = T_x X_{\pi(x)}$ is the tangent space to the fibre, and
\begin{equation*}
\m{H}^{\omega_X}_x = \set*{\left. u \in T_xX\right| \omega_X(u, v)=0 \ \forall v \in \m{V}_x}.
\end{equation*}
In this context $\omega_X$ is called a \emph{symplectic connection} \cite[Chapter 6]{McDuff_Salamon} and it determines the following curvature quantities:
\begin{enumerate}[label=(\roman*)]
\item the \emph{symplectic curvature} is a two-form on $B$ with values in the fibrewise Hamiltonian vector fields defined as, for $v_1, v_2 \in \f{X}(B)$, as
\begin{equation*}
F_{\m{H}}(v_1, v_2) = [v_1^\sharp, v_2^\sharp]^{\mrm{vert}},
\end{equation*}
where $v_j^\sharp$ denotes the horizontal lift. Let $m$ be the map which associates to a fibrewise Hamiltonian vector field its fibrewise Hamiltonian function with fibrewise mean value zero. Thus we consider $m^*(F_{\m{H}})$, which is a two-form on $B$ with values in $C^\infty_0(X, \bb{R})$, and we pull it back on $X$. It is related to the symplectic connection as follows \cite[\S 3.2]{DervanSektnan_OSC1}:
\begin{equation*}
(\omega_X)_{\m{H}} = m^*(F_{\m{H}}) + \pi^*\beta,
\end{equation*}
where $\beta$ is a two-form on $B$.
\item the curvature $\rho$ of the Hermitian connection induced on the top wedge power $\ext{m}\m{V}$. We will consider its purely horizontal part $\rho_{\m{H}}$.
\end{enumerate}
\begin{definition}
\cite[\S 3.3]{DervanSektnan_OSC1}
A relatively cscK metric $\omega_X$ is called an \emph{optimal symplectic connection} if
\begin{equation}\label{Eq:OSC_cscK}
p_E \left(\Delta_{\m{V}}(\Lambda_{\omega_B} m^*(F_{\m{H}}))+ \Lambda_{\omega_B}\rho_{\m{H}} \right) =0.
\end{equation}
This is a second order elliptic equation on the vector bundle $E \to B$. In the following, we will use the notation $\Theta(\omega_X, J) = \Delta_{\m{V}}(\Lambda_{\omega_B} m^*(F_{\m{H}}))+ \Lambda_{\omega_B}\rho_{\m{H}}$.
\end{definition}

The linearisation of the equation at a solution is given by the operator $\m{R}^*\m{R}$ \cite[\S 4.3]{DervanSektnan_OSC1}, where 
\begin{equation}\label{Eq:operator_R}
\m{R}(\varphi_E) = \delbar_B \nabla_{\m{V}}^{1,0} \varphi_E
\end{equation}
and the adjoint is computed with respect to $\omega_F + \omega_B$. Here $\nabla_{\m{V}}^{1,0} \varphi_E$ is a section of the holomorphic tangent bundle; the vertical part of $\delbar \nabla_{\m{V}}^{1,0} \varphi_E$ vanishes since $\varphi \in C^\infty(E)$ and the horizontal part is denoted by the expression \eqref{Eq:operator_R}.
The operator \eqref{Eq:operator_R} can be described as follows \cite[\S 4.3]{DervanSektnan_OSC1}: let $\m{D}_k^*\m{D}_k$ be the Lichnerowicz operator with respect to the K\"ahler metric $\omega_k$. It can be written as a power series expansion in negative powers of $k$:
\begin{equation*}
\m{D}_k^*\m{D}_k = \m{L}_0 + k^{-1}\m{L}_1 + O\left(k^{-2}\right),
\end{equation*}
where $\m{L}_0$ is the \emph{vertical} Lichnerowicz operator $\m{D}_{\m{V}}^*\m{D}_{\m{V}}$. Then for $\varphi, \psi$ fibrewise holomorphy potentials
\begin{equation*}
\int_X \varphi \m{L}_1(\psi) \omega_X^m \wedge \omega_B^n = \int_X \langle \m{R}\varphi, \m{R}\psi \rangle_{\omega_F+\omega_B} \omega_X^m\wedge \omega_B^n.
\end{equation*}
This means that the operator $\m{R}^*\m{R}$ can actually be seen as $p_E \circ \m{L}_1$ restricted to $\m{C}^\infty_E(X)$. The kernel of $\m{R}$, thus of $\m{R}^*\m{R}$, consists of fibrewise holomorphy potentials which are global holomorphy potentials on $X$ with respect to $\omega_k$.

\subsection{Optimal symplectic connections in general}
Let $\pi_Y:(Y, H_Y) \to (B,L)$ be a holomorphic submersion where all the fibres are K-semistable. Let $(\m{X},\m{H}) \to S \times (B, L)$ be a degeneration of the fibration $(Y, H_Y) \to (B, L)$ to a relatively cscK fibration $\pi:(X, H) \to (B, L)$, where $S \subset \bb{C}$.
In particular all the fibrations $\m{X}_s \to B$ are biholomorphic to $Y \to B$ for $s \neq 0$.

Let $J_0$ be a complex structure on $X$ such that $(\omega_X, J_0)$ is relatively cscK. Through the fibrewise analogue of Ehresmann's fibration theorem \ref{Prop:Ehresmann_relative}, we can take the perspective of fixing $\omega_X$ and seeing $\m{X} \to B \times S$ as a family of compatible complex structures  $\{ J_s \}$ which keep $\pi$ a holomorphic submersion and are all biholomorphic except for $J_0$.
Let
\begin{equation*}
\scr{J}_\pi = \set*{J \ \text{almost complex structure compatible with }\omega_X \ \text{and s.t.} \ \dd\pi \circ J = J_B \circ \dd\pi }.
\end{equation*}
Compatibility with $\omega_X$ means that $\omega_X(J\cdot,J \cdot)= \omega_X(\cdot, \cdot)$ and that $\omega_b \circ J|_{X_b}$ is non degenerate and positive definite. The tangent space at $J_0$ to $\scr{J}_\pi$ can be identified with
\begin{equation}\label{Eq:tangent_J_pi}
T_{J_0}^{0,1}\scr{J}_\pi = \set*{\left.A \in \Omega^{0,1}(\m{V}^{1,0}) \right| \omega_F (A\cdot, \cdot) + \omega_F (\cdot, A\cdot) = 0}.
\end{equation}

As in \S \ref{Subsec:Sz_def_theory}, for any fibre $X_b$ let $V_b$ be the Kuranishi space, $K_b$ the group of Hamiltonian isometries and $\Psi_b$ the Kuranishi map \eqref{Eq:Kuranishi_map} of the fibre.
Let $x_{s,b} \in V_b$ be such that $\Psi_b(x_{s,b}) = J_s|_{X_b}$.
Let $\mu_b$ be the moment map \eqref{Eq:Moment_map_Sz}. Then we can define a section of $C^\infty(E)$ as
\begin{equation}\label{Eq:mu_s_section}
\mu_s(b) = p_E \left(S_{X_b}\left(\omega_b, \Psi_b(x_{s,b})\right)\right).
\end{equation}
Note that $\Psi_b$ may not vary smoothly with $b$, but when applied to $x_{s,b}$ it gives the complex structure $J_s|_{X_b}$. Since $J_s$ is a complex structure defined on the whole $X$, it varies smoothly with the base, so $\mu_s$ is a smooth section.
For each fibre $X_b$ we can linearise the action to the tangent space to $V_b$ at $0$ as in \S \ref{Subsec:Sz_def_theory}, so we can define another section $\nu$ of $E$ by
\begin{equation}\label{Eq:nu_section}
\nu(b) = \nu_b(v_b).
\end{equation}
Here $\nu_b$ is the moment map defined in Definition \ref{Def:map_nu} for the linear action of $K_b$ on $\widetilde{H}^1(X_b)$, and $v_b \in \widetilde{H}^1(X_b)$ is tangent at $0 \in V_b$. Even if $\nu_b$ does not necessarily vary smoothly with $b$, $\nu$ is a smooth section because there is an expansion
\begin{equation}\label{Eq:mu_s_expansion}
\mu_s(b) = \frac{s^2}{2}\nu + O(s^3)
\end{equation}
as explained in Proposition \ref{Prop:expansion_mu_nu}, and $\mu$ is smooth.

\begin{definition}
We say that the relative K\"ahler form $\omega_X$ is an \emph{optimal symplectic connection} for the family $(\m{X}, \m{B}) \to B \times S$ if it satisfies the equation
\begin{equation}\label{Eq:genOSC}
p_E \left(\Delta_{\m{V}}(\Lambda_{\omega_B} m^*(F_{\m{H}}))+ \Lambda_{\omega_B}\rho_{\m{H}} \right) + \frac{\lambda}{2} \nu = 0
\end{equation}
for some $\lambda >0$.
\end{definition}
The first term is the left-hand side of the optimal symplectic connection equation \eqref{Eq:OSC_cscK} for fibrewise cscK metrics, and it involves only the complex structure $J_0$. The second term represents the deformation of the complex structure, in terms of the first order deformation of the fibres.
In \S \ref{Subsec:linearisation_OSC}, we will prove that the linearisation of the equation at a solution is given by an operator
\begin{equation*}
\widehat{\m{L}} =\m{R}^*\m{R} + \m{A}^*\m{A},
\end{equation*}
where $\m{R}$ is the operator \eqref{Eq:operator_R} and $\m{A}$ is obtained by extending the map \eqref{Eq:linearised_infinitesimal_action} to a fibrewise map. As shown in Proposition \ref{Prop:kernel_linearisation}, its kernel is given by the fibrewise $J_0$-holomorphy potentials which are global holomorphy potentials with respect to $J_s$.

The definition of an optimal symplectic connection can be generalised as follows.
\begin{definition}\label{Def:extremal_OSC}
$\omega_X$ is an \emph{extremal symplectic connection} on $Y$ if
\begin{equation*}
\widehat{\m{L}}\left(p_E (\Theta(\omega_X, J_0)) + \frac{\lambda}{2}\nu \right) = 0.
\end{equation*}
In particular, the fibrewise $J_0$-holomorphy potential
\begin{equation}\label{Eq:extremal_holomorphy_potential}
h_1 := p_E \left(\Theta(\omega_X, J_0)\right) + \frac{\lambda}{2}\nu
\end{equation}
is a holomorphy potential for the complex structure of $Y$.
\end{definition}

\begin{remark}
In a recent work \cite{SektnanSpotti_Extremal_tc}, Sektnan-Spotti address a similar problem in a specific situation: they prove the existence of extremal metrics in an adiabatic class on the total space of certain test configurations, compactified over $\bb{P}^1$, where the central fibre is cscK and the general fibre is just K-semistable. By using the same $\bb{C}^*$-action it is possible to view such a test configuration as a deformation of a compactified product test configuration with cscK fibre. Their proof does not need the extremal symplectic connection condition but it requires that the rank of the vector bundle $E$ is equal to the dimension of the kernel of the operator $\m{R}$ \eqref{Eq:operator_R}. In particular, this hypothesis implies that $E$ is a trivial vector bundle and it is reasonable to expect that it also implies that the extremal symplectic connection condition \eqref{Def:extremal_OSC} is satisfied (though it seems challenging to actually prove this), thus relating the two constructions.
\end{remark}

\section{Deformations of fibrations}\label{Sec:deformations_of_fibrations}

In this section, we study more in detail the deformations of a holomorphic fibrewise cscK fibration. In particular, we prove a relative version of Ehresmann's fibration theorem in Proposition \ref{Prop:Ehresmann_relative}, which allows us to view families of fibrations as families of complex structures in $\scr{J}_\pi$ on the same underlying smooth fibration.
In \S \ref{Subsec:relative_Kuranishi} we then prove a relative version of Kuranishi's Theorem \ref{Thm:Kuranishi}, which will be needed in Section \ref{Sec:adiabatic_limit} to describe the linearisation of the optimal symplectic connection equation \eqref{Eq:genOSC}.

We start by giving a description in local coordinates of a first-order deformation $A \in T_{J_0}\scr{J}_\pi$.
We fix a local trivialisation of $X \to B$ and we make the following notation conventions:
\begin{enumerate}[label=(\roman*)]
\item[] $\{ w^1, \dots, w^m \}$ are vertical holomorphic coordinates; indices are denoted with the letters $a, b, c, \dots$;
\item[] $\{z^1, \dots, z^n\}$ are holomorphic coordinates on the base; indices are denoted with the letters $i, j, k, \dots$.
\end{enumerate}
We can then write $A \in T_J \scr{J}_\pi$ locally as
\begin{equation*}
A = A^a_{\ \bar{b}} \dd \bar{w}^b \otimes \del_{w^a} + A^a_{\ \bar{j}} \dd\bar{z}^j \otimes \del_{w^a},
\end{equation*}
since $A$ takes values in the vertical vector fields.
The following lemma explains the relation between $A^a_{\ \bar{b}}$ and $A^a_{\ \bar{j}}$.

\begin{lemma}\label{Lemma:deformation_coord_vertical_horizontal}
For $A \in T_{J_0}\scr{J}_\pi$ we have that:
\begin{enumerate}[label =(\roman*)]
\item $A$ vanishes on horizontal vector fields;
\item $A^{a}_{\ \bar{j}} = A^a_{\ \bar{c}}(\omega_{F})^{d\bar{c}} (\omega_X)_{d\bar{j}}$.
\end{enumerate}
\end{lemma}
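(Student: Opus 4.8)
The plan is to unwind the two defining conditions of $T_{J_0}\scr{J}_\pi$ — compatibility with $\omega_X$ and the requirement that $J$ projects to $J_B$ — in the local coordinates introduced above, and read off both claims as algebraic consequences.

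First I would establish (i). The condition $\dd\pi\circ J = J_B\circ\dd\pi$ holds for every $J_s$ in the family, so differentiating the family at $s=0$ gives $\dd\pi\circ A = 0$ on the nose: $A$ takes values in $\ker\dd\pi = \m{V}$, which we already used to write $A$ with only $\del_{w^a}$ components. To get the stronger statement that $A$ \emph{vanishes on horizontal vectors}, I would differentiate instead the relation $A J_0 + J_0 A = 0$ together with the projection condition. Concretely, write the horizontal lift of $\del_{z^j}$ as $\del_{z^j}^\sharp = \del_{z^j} + (\text{vertical correction})$; since $A$ is complex-antilinear ($AJ_0=-J_0A$) it is determined by its action on $T^{0,1}$, and on $T^{0,1}$ the $(0,1)$-form $A\in\Omega^{0,1}(\m{V}^{1,0})$ is what it is. The content of (i) is that, after the identification \eqref{Eq:tangent_J_pi}, the component $A^a_{\ \bar j}\,\dd\bar z^j\otimes\del_{w^a}$ is not free: it is forced to be the ``tautological'' extension of $A^a_{\ \bar b}$ dictated by the horizontal distribution $\m{H}^{\omega_X}$. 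So I would phrase (i) as: $A$, viewed as a map $TX\to\m{V}$, annihilates the $\omega_X$-horizontal subspace $\m{H}^{\omega_X}$. This is exactly the statement that the only honest degrees of freedom live in $\Omega^{0,1}(X/B;\m{V}^{1,0})$, i.e.\ fibrewise.

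For (ii), I would use the compatibility condition $\omega_X(A\cdot,\cdot)+\omega_X(\cdot,A\cdot)=0$, which by \eqref{Eq:tangent_J_pi} is the symmetry $(\omega_X)(A u,v)=(\omega_X)(Av,u)$ — equivalently, lowering an index of $A$ with $\omega_X$ produces a \emph{symmetric} tensor. Now combine this with (i): since $A$ vanishes on $\m{H}^{\omega_X}$ and takes values in $\m{V}$, the symmetric tensor $\omega_X(A\cdot,\cdot)$ is supported on $\m{V}\otimes\m{V}$ in one slot pattern. Pairing the vertical output index $a$ with a vertical input via $\omega_X$ restricted to the fibre, which is $\omega_F$, lets one trade the full $\omega_X$ for its vertical part $\omega_F$ on that slot. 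Explicitly: write $(\omega_X)_{d\bar j}$ for the mixed component; the antisymmetry/symmetry of $A$ against $\omega_X$ gives $A^a_{\ \bar j}(\omega_F)_{a\bar d} = A^a_{\ \bar d}(\omega_X)_{a\bar j}$ — wait, I would instead match indices carefully so that the free index placement agrees with the claimed formula — and then raise the $\bar d$ index using the inverse fibre metric $(\omega_F)^{d\bar c}$ to arrive at
\begin{equation*}
A^{a}_{\ \bar j} = A^a_{\ \bar c}\,(\omega_F)^{d\bar c}\,(\omega_X)_{d\bar j},
\end{equation*}
which is (ii). The point is that (ii) is just (i) rewritten in coordinates: ``$A$ kills horizontal vectors'' says precisely that the $\dd\bar z^j$-part of $A$ is obtained from the $\dd\bar w^c$-part by contracting with the metric data $(\omega_F)^{d\bar c}(\omega_X)_{d\bar j}$ that encodes the splitting $TX=\m{V}\oplus\m{H}^{\omega_X}$.

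The main obstacle I anticipate is purely notational: keeping the bar-indices and the symmetry conventions straight, since $A$ is a $(0,1)$-form with values in $T^{1,0}$, the compatibility condition mixes a symmetry on one slot with an antilinearity condition, and the horizontal distribution $\m{H}^{\omega_X}$ is itself defined by an $\omega_X$-orthogonality involving the mixed components $(\omega_X)_{d\bar j}$. I would therefore fix a clean choice: take the local trivialisation so that, at the point under consideration, the mixed Kähler components are as simple as possible is \emph{not} available in general (the whole content is the nonzero mixed part), so I would instead just track indices honestly. Once the bookkeeping is set up, both (i) and (ii) are immediate from expanding $\dd\pi\circ A=0$ and $\omega_X(A\cdot,\cdot)=\omega_X(\cdot,A\cdot)$ against the coordinate frames $\{\del_{w^a},\del_{z^j}^\sharp\}$; no analysis is needed, only linear algebra fibrewise.
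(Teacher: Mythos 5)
There is a genuine gap in your argument for (i), and the detour you take in (ii) is confused, even though you land on the correct conceptual summary at the end.

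For (i): you write that $\dd\pi\circ J_s = J_B\circ\dd\pi$ differentiates to $\dd\pi\circ A = 0$, i.e.\ $A$ takes values in $\m{V}$ — correct, but this is already encoded in $A\in\Omega^{0,1}(\m{V}^{1,0})$ and it says nothing about where $A$ \emph{vanishes}. You then propose to ``differentiate the relation $AJ_0+J_0A=0$,'' but that relation is a linear constraint \emph{on} $A$ itself, not a family identity one can differentiate further, and anticommutation plus $\m{V}$-valued-ness alone leave $A^a_{\ \bar j}$ completely free. The missing ingredient is precisely the $\omega_F$-compatibility built into the identification \eqref{Eq:tangent_J_pi}: $\omega_F(Au,v)+\omega_F(u,Av)=0$. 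With it, for $u\in\m{V}$, $v\in\m{H}^{\omega_X}$ one computes
\begin{equation*}
\omega_X(u,Av) \;=\; \omega_F(u,Av) \;=\; -\,\omega_F(Au,v) \;=\; 0,
\end{equation*}
the first equality since both $u$ and $Av$ are vertical, the second by compatibility, the third since $Au$ is vertical and $v$ is horizontal. Thus $Av\in\m{H}^{\omega_X}\cap\m{V}=\{0\}$. Without invoking the $\omega_F$-compatibility you cannot reach this conclusion, so as written your (i) is not proved.

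For (ii): you invoke ``the compatibility condition $\omega_X(A\cdot,\cdot)+\omega_X(\cdot,A\cdot)=0$,'' but the condition in the tangent-space description is with $\omega_F$, not $\omega_X$. More to the point, the symmetry of the lowered tensor is \emph{not} what yields the stated formula, and your trial identity $A^a_{\ \bar j}(\omega_F)_{a\bar d}=A^a_{\ \bar d}(\omega_X)_{a\bar j}$ does not rearrange into the claim (note that in the target formula $A^{a}_{\ \bar j}=A^a_{\ \bar c}(\omega_F)^{d\bar c}(\omega_X)_{d\bar j}$ the index $a$ is free and not contracted). What actually gives (ii) is (i) together with the coordinate decomposition of $\del_{\bar z^j}$ into horizontal and vertical parts: writing $\del_{\bar z^j}=\varepsilon_{\bar j}+\eta_{\bar j}$ with $\varepsilon_{\bar j}\in\m{H}^{\omega_X}$ and $\eta_{\bar j}=(\omega_F)^{a\bar c}(\omega_X)_{a\bar j}\,\del_{\bar w^c}$, one applies $A(\varepsilon_{\bar j})=0$ and reads off the identity. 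Your closing sentence (``(ii) is just (i) rewritten in coordinates'') is the right idea, but the intermediate symmetry argument should be dropped; it is both unnecessary and, as stated, incorrect.
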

\begin{proof}
As for the first claim, if $u \in \m{V}$, $v \in \m{H}$, then
\begin{equation}\label{Eq:Av_vertical}
\omega_X(u, Av) = \omega_F(u, Av) = -\omega_F(Au, v) = 0.
\end{equation}
Indeed, the first equality comes from the fact that $Av$ is vertical and $\omega_X$ coincides with $\omega_F$ on a pair of vertical vector fields. The middle equality follows from the compatibility of the deformation with the fibrewise symplectic structure \eqref{Eq:tangent_J_pi}. The last equality follows from the fact that $v$ is horizontal. So $Av$ is horizontal, since the relation \eqref{Eq:Av_vertical} holds for any $u \in \m{V}$; but $Av$ is also vertical. Thus $Av=0$. This proves the first claim.

We prove the second claim. While $\del_{w^a}$, $\del_{\bar{w}^a}$ are vertical vector fields on $X$, $\del_{z^j}$, $\del_{\bar{z}^j}$ are not horizontal in general. So we have a splitting
\begin{equation*}
\del_{\bar{z}^j} = \varepsilon_{\bar{j}} + \eta_{\bar{j}} \qquad \text{with} \qquad \varepsilon_{\bar{j}}\in \m{H}^{\omega_X}, \ \eta_{\bar{j}} \in \m{V}.
\end{equation*}
Then from $\omega_X(\del_{w^a}, \varepsilon_{\bar{j}}) = 0$ it follows that
\begin{equation*}
(\omega_{X})_{a \bar{b}} \ \eta_{\bar{j}}^b = (\omega_{X})_{a \bar{j}}.
\end{equation*}
So $\eta_{\bar{j}} = (\omega_F)^{a\bar{c}}(\omega_{X})_{a\bar{j}}\del_{\bar{w}^c}$. Thus we can write the horizontal part of $\del_{\bar{z}^j}$ as
\begin{equation*}
\varepsilon_{\bar{j}} = \del_{\bar{z}^j} - (\omega_F)^{a\bar{c}}(\omega_{X})_{a\bar{j}}\del_{\bar{w}^c}.
\end{equation*}
Since $A$ takes value in the vertical vector fields, $A(\varepsilon_{\bar{j}}) = 0$, so
\begin{equation*}
0 = -A^{a}_{\ \bar{c}}(\omega_{F})^{d\bar{c}} (\omega_X)_{d\bar{j}} \del_{w^a}+ A^a_{\ \bar{j}} \ \del_{w^a},
\end{equation*}
hence the claim.
\end{proof}

The following lemma explains the relation between a $J \in\scr{J}_\pi$ and the splitting \eqref{Eq:splitting_tangent_bundle} of the tangent bundle of $X$ induced by $\omega_X$, by showing that the elements of $\scr{J}_\pi$ in a neighborhood of $J_0$ differ from $J_0$ only on the vertical vector bundle.
\begin{lemma}\label{Lemma:deformations_preserve_HV}
Any $J \in \scr{J}_\pi$ preserves the splitting of the tangent space $TX = \m{V} \oplus \m{H}^{\omega_X}$. Moreover, $J(u) = J_0(u)$ for all $u \in \m{H}^{\omega_X}$.
\end{lemma}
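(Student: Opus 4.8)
The plan is to prove the two assertions by a short sequence of pointwise linear-algebra steps on $X$. First I would check that $J$ preserves the vertical distribution $\m{V} = \ker \dd\pi$: if $u \in \m{V}_x$, then the defining relation $\dd\pi \circ J = J_B \circ \dd\pi$ gives $\dd\pi(Ju) = J_B(\dd\pi u) = 0$, so $Ju \in \m{V}_x$. In particular $\m{V}$ is $J$-invariant, and since $\omega_X$ restricts to a non-degenerate (Kähler) form on each fibre, one has $\m{V}_x \cap \m{H}^{\omega_X}_x = 0$ and $T_xX = \m{V}_x \oplus \m{H}^{\omega_X}_x$ as in \eqref{Eq:splitting_tangent_bundle}; by a dimension count $\dd\pi$ then restricts to a linear isomorphism $\m{H}^{\omega_X}_x \xrightarrow{\sim} T_{\pi(x)}B$.

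Next I would show that $\m{H}^{\omega_X}$ is $J$-invariant. Let $u \in \m{H}^{\omega_X}_x$ and $v \in \m{V}_x$. Using $J^2 = -\mathrm{id}$ and compatibility $\omega_X(J\cdot, J\cdot) = \omega_X(\cdot,\cdot)$, one computes
\[
\omega_X(Ju, v) = \omega_X\bigl(Ju, J(-Jv)\bigr) = \omega_X(u, -Jv) = -\omega_X(u, Jv).
\]
By the first step $Jv \in \m{V}_x$, and $u \in \m{H}^{\omega_X}_x$ annihilates $\m{V}_x$ under $\omega_X$, hence $\omega_X(Ju,v) = 0$. Since $v \in \m{V}_x$ was arbitrary, $Ju \in \m{H}^{\omega_X}_x$, so $J$ preserves the splitting.

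Finally, for the ``moreover'' statement, the same two steps apply verbatim to $J_0 \in \scr{J}_\pi$, so both $Ju$ and $J_0 u$ lie in $\m{H}^{\omega_X}_x$ whenever $u$ does, and $\dd\pi(Ju) = J_B(\dd\pi u) = \dd\pi(J_0 u)$. Since $\dd\pi|_{\m{H}^{\omega_X}_x}$ is injective, $Ju = J_0 u$. I do not anticipate a genuine obstacle: the only point requiring a little care is the bookkeeping that $\m{H}^{\omega_X}$ is honestly a complement to $\m{V}$ and is carried isomorphically onto $TB$ by $\dd\pi$, which is precisely where the relative-Kähler (fibrewise non-degeneracy) hypothesis on $\omega_X$ is used. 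Note that the argument uses nothing about $J$ being near $J_0$, so the conclusion in fact holds for every $J \in \scr{J}_\pi$.
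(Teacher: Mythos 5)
Your proof is correct, and for the first assertion (that $J$ preserves both $\m{V}$ and $\m{H}^{\omega_X}$) it coincides with the paper's argument. For the \emph{moreover} statement, however, you take a genuinely different route. The paper considers a first-order deformation $J_0 + \varepsilon A$, invokes Lemma~\ref{Lemma:deformation_coord_vertical_horizontal} to see that $A$ annihilates horizontal vectors, and then differentiates along a path $J_s$ in $\scr{J}_\pi$ joining $J_0$ to $J$ to conclude $\del_s J_s(u) = 0$ on $\m{H}^{\omega_X}$; this requires knowing that the tangent vectors $A_s = \del_s J_s$ at every point along the path vanish on $\m{H}^{\omega_X}$, and implicitly that such a path exists. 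Your argument is purely pointwise linear algebra: both $Ju$ and $J_0 u$ lie in $\m{H}^{\omega_X}_x$ (by the first assertion applied to $J$ and to $J_0$), they have the same image $J_B(\dd\pi\, u)$ under $\dd\pi$, and $\dd\pi$ is injective on $\m{H}^{\omega_X}_x$ because $\m{V}_x \cap \m{H}^{\omega_X}_x = 0$ (using fibrewise non-degeneracy of $\omega_X$). This sidesteps Lemma~\ref{Lemma:deformation_coord_vertical_horizontal} entirely, needs no path-connectedness or proximity of $J$ to $J_0$, and is the cleaner of the two arguments; the paper's version instead makes the role of the infinitesimal deformation space $T_{J_0}\scr{J}_\pi$ explicit, which is the viewpoint used throughout the subsequent linearisation analysis.
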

\begin{proof}
For the first fact to hold, we have to prove that $J(\m{V}) \subseteq \m{V}$ and $J(\m{H}^{\omega_X})\subseteq \m{H}^{\omega_X}$.
\begin{enumerate}[label = (\roman*)]
\item Let $v \in \m{V}$. Then
\begin{equation*}
\dd \pi (Jv) = J_B (\underbrace{\dd \pi (v)}_{=0}) = 0,
\end{equation*}
so $Jv \in \m{V}$.
\item Let $u \in \m{H}^{\omega_X}$. Then $\omega_X(u, v) =0$ for every $v \in \m{V}$. So
\begin{equation*}
\omega_X(Ju, v) = -\omega_X(u, Jv) = 0,
\end{equation*}
since $Jv$ is vertical by the previous step.
\end{enumerate}
To prove that indeed $J(\m{H}^{\omega_X}) = J_0(\m{H}^{\omega_X})$ for all $J \in \scr{J}_\pi$, consider for instance a first order deformation $J_0 + \varepsilon A$. Since $A$ vanishes on horizontal vector fields by Lemma \ref{Lemma:deformation_coord_vertical_horizontal}, if $u \in \m{H}^{\omega_X}$, $(J_0+\varepsilon A)(u) = J_0(u)$. Let now $J_s$ be a path in $\scr{J}_\pi$ which joins $J_0$ to $J$.
Then
\begin{equation*}
\del_s J_s(\m{H}) = A_s (\m{H}) = 0,
\end{equation*}
so for all $s$ $J_s(\m{H}^{\omega_X}) = J_0(\m{H}^{\omega_X})$, from which the claim follows.
\end{proof}
In particular, the last part of the proof shows that the horizontal parts of the operators $\del, \delbar$ with respect to $J_0$ remain the same for any $J$ in $\scr{J}_\pi$.
\begin{remark}
Let $k \gg 0$ be such that $\omega_X + k\omega_B$ is a K\"ahler form on $X$. Then $\scr{J}_\pi \hookrightarrow \scr{J}(\omega_X + k\omega_B)$. Indeed for $J \in \scr{J}_\pi$
\begin{equation*}
\omega_k (J\cdot, J\cdot) = \omega_X(J\cdot, J\cdot) + k \pi^*\omega_B(J\cdot, J\cdot)
\end{equation*}
and $\pi^*\omega_B(J\cdot, J\cdot) = \omega_B(\dd \pi J\cdot, \dd\pi J\cdot) = \omega_B(J_B\dd\pi\cdot, J_B\dd\pi\cdot) = \pi^*\omega_B(\cdot, \cdot)$.
\end{remark}

\subsection{Families of holomorphic submersions}\label{Subsec:family_submersions}
In this section, we give a more rigorous definition of a family of fibrations and we prove a relative version of Ehresmann's fibration theorem. Families of fibrations are in particular families of holomorphic maps, for which a deformation theory has been developed by Horikawa \cite{Horikawa_deformationsI,Horikawa_deformationsII} in the unobstructed case.
\begin{definition}
Let $B$ be a smooth manifold. A family of holomorphic submersions onto $B$ with central fibration $X \to B$ is defined as the data of $(\m{X}, \widehat{\pi}, p, S)$, where:
\begin{enumerate}[label=(\roman*)]
\item $\m{X}$ is a compact complex manifold, and $S$ is a complex manifold;
\item $p : \m{X}\to S$ and $\widehat{\pi} : \m{X}\to B \times S$ are holomorphic submersions and $p = \mrm{pr}_2\circ\widehat{\pi}$;
\item we can pick a distinguished point $0 \in S$ such that $\widehat{\pi}_0$ induces $\pi : X \to B$.
\end{enumerate}

We say that a family of holomorphic maps $(\m{X}, \widehat{\pi}, p, S)$ onto $B$ is \emph{complete} if for any other family $(\m{X}', \widehat{\pi}', p', S')$ with the same central fibre, there exists a map $h : S'\to S$, defined locally on neighbourhoods of the distinguished point, such that the family $(\m{X}', \widehat{\pi}', p', S')$ can be obtained by $(\m{X}, \widehat{\pi}, p, S)$ via pull-back using $h$.
\end{definition}

For a smooth proper morphism $p:M\to N$ with $N$ connected, Ehresmann's fibration theorem \cite[Proposition 6.2.2.]{Huybrechts_ComplexGeometry} says that the fibres of $p$ are all diffeomorphic, and more precisely that $M$ is locally diffeomorphic to a product. We can extend Ehresmann's theorem to our setting as follows.
\begin{proposition}[Relative Ehresmann's theorem]\label{Prop:Ehresmann_relative}
Let $(\m{X}, \widehat{\pi}, p, S)$ be a family of holomorphic maps onto $B$ with $S$ connected. Then there exists a diffeomorphism $\tau: \m{X} \overset{\sim}{\to} X \times S$ which commutes with the projections to $B$, i.e.
\begin{equation*}
\begin{tikzcd}
\m{X} \arrow[r, "\tau", "\sim"']\arrow[d, "\widehat{\pi}"'] & X \times S \arrow[ld, "\pi \times \mrm{id}"]\\
B \times S
\end{tikzcd}
\end{equation*}
\end{proposition}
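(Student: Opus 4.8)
The plan is to adapt the classical proof of Ehresmann's theorem by building the trivialising diffeomorphism fibrewise over the base $B$, using a partition of unity on $S$ together with the compactness of the fibres of $\widehat\pi$, and then patching. The key point is that all the relevant maps must be done \emph{relatively} over $B$, so that the resulting $\tau$ commutes with the projections to $B\times S$.

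First I would work locally on $S$: fix a point $s_0\in S$ (after a translation, $s_0=0$) and choose a contractible coordinate chart $U\subset S$ around it, which we may take to be diffeomorphic to a ball in $\mathbb R^{2\dim S}$. Pull back the coordinate vector fields $\partial/\partial t^\alpha$ on $U$ (where $t^1,\dots,t^{2\dim S}$ are real coordinates on $U$) to vector fields on $\widehat\pi^{-1}(B\times U)$: concretely, since $\widehat\pi$ is a submersion, we may choose smooth lifts $\widetilde{\partial/\partial t^\alpha}$ of these vector fields, and crucially we can choose them to be \emph{tangent to the fibres of $p$ composed with $\mathrm{pr}_B$ in the appropriate sense} — that is, so that $d\widehat\pi(\widetilde{\partial/\partial t^\alpha})=(0,\partial/\partial t^\alpha)$, which forces $d(\mathrm{pr}_2\circ\widehat\pi)=dp$ to push these fields to $\partial/\partial t^\alpha$ and $d(\mathrm{pr}_1\circ\widehat\pi)$ to push them to $0$. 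This last condition is exactly what guarantees that the flows preserve the $B$-coordinate. Because $\widehat\pi$ is proper (its fibres are compact, being closed submanifolds of the compact $\mathcal X$), the flows of these lifted vector fields are complete, and integrating them over $U$ produces a diffeomorphism $\widehat\pi^{-1}(B\times U)\xrightarrow{\sim}\widehat\pi^{-1}(B\times\{0\})\times U = X\times U$ commuting with the projections to $B\times U$; the commutation with the $B$-projection is precisely the content of $d(\mathrm{pr}_1\circ\widehat\pi)(\widetilde{\partial/\partial t^\alpha})=0$.

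Next I would globalise over $S$. Cover $S$ by such contractible charts $\{U_i\}$, obtaining local trivialisations $\tau_i:\widehat\pi^{-1}(B\times U_i)\xrightarrow{\sim} X\times U_i$ each commuting with the projection to $B$. Since $S$ is connected (and, as a complex manifold, paracontractible), a standard argument — either by choosing $S$ itself to be a ball in applications, or by the general fact that a fibre bundle over a contractible base is trivial, applied after noting that the transition functions $\tau_i\circ\tau_j^{-1}$ are automorphisms of $X\times(U_i\cap U_j)$ over $B\times(U_i\cap U_j)$ — allows one to patch these into a single diffeomorphism $\tau:\mathcal X\xrightarrow{\sim}X\times S$ over $B\times S$. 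Alternatively, and more self-containedly, one observes that $\widehat\pi$ and $p$ together realise $\mathcal X$ as a smooth fibre bundle over $B\times S$ with fibre a point's worth of data in the $B$ direction and fibre $X_b$ in the vertical direction; restricting to $\{b\}\times S$ for each $b$ gives an ordinary Ehresmann trivialisation $\mathcal X|_{\{b\}\times S}\cong X_b\times S$, and the smooth dependence on $b$ (which follows since everything was constructed from the globally defined submersions $\widehat\pi$, $p$) assembles these into the desired $\tau$.

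The main obstacle is the globalisation step over $S$: producing the \emph{single} diffeomorphism $\tau$ rather than a cocycle of local ones requires either invoking triviality of smooth bundles over contractible bases (harmless if one is content to work with $S$ a ball, which suffices for all the intended applications where $S\subset\mathbb C$) or running a partition-of-unity argument on $S$ that carefully preserves the $B$-fibration throughout. I expect the cleanest writeup restricts to $S$ a ball or even an interval/disc in $\mathbb C$ from the outset, in which case the local construction of the previous paragraph already gives $\tau$ globally, and the only thing to check is that the lifted vector fields can be chosen $B$-horizontal in the weak sense $d(\mathrm{pr}_1\circ\widehat\pi)(\widetilde{\partial/\partial t^\alpha})=0$ — which is immediate from surjectivity of $d\widehat\pi$ onto $T(B\times S)$ and the product structure of the target, since one simply lifts the vector field $(0,\partial/\partial t^\alpha)$ on $B\times S$.
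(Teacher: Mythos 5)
Your proposal is correct and takes essentially the same approach as the paper: lift the coordinate vector field on $S$ (viewed as $(0,\partial_s)$ on $B\times S$) through the submersion $\widehat\pi$, observe that the resulting flow automatically commutes with $\mathrm{pr}_1\circ\widehat\pi$, and integrate to produce $\tau$. The paper is terser — it restricts $S$ to a segment from the outset and lifts a single $\partial_s$, sidestepping the globalisation discussion you raise, and leaves the completeness-of-flow point (from compactness of $\mathcal X$) implicit — but the core mechanism is identical.
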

\begin{proof}
Up to restricting to a segment, we can assume that $S$ is a small open neighborhood of the origin in $\bb{R}$. We can then consider the vector field $u=\del_s$, and view it as a vector field in $B \times S$, denoted by $u'$. This means that, denoting by $\phi^t_{u'}$ its flow and $\mrm{pr}_2: B\times S \to S$ the second projection, $\mrm{pr}_2 \circ \phi^t_{u'} = \phi^t_{u} \circ \mrm{pr}_2$, i.e. $u'$ is $\mrm{pr}_2$-related to $u$.
It is a consequence of the implicit function theorem that if $F : M \to N$ is a smooth \emph{submersion} of manifolds, then for any vector field on $N$ there exists a vector field on $M$ which is $F$-related to it.
Let then $v$ be a vector field on $\m{X}$ which is $\widehat{\pi}$-related to $u'$, i.e.
\begin{equation*}
\widehat{\pi} \circ \phi^t_v = \phi^t_{u'} \circ \widehat{\pi}.
\end{equation*}

Then, using $p = \mrm{pr}_2\circ \widehat{\pi}$, we obtain
\begin{equation*}
p \circ \phi^t_v = \phi^t_{u} \circ p,
\end{equation*}
thus $v$ is $p$-related to $u$. Hence we can define a map
\begin{equation*}
\begin{aligned}
\tau:\m{X} &\longrightarrow X \times S \\
z &\longmapsto (\phi_v^{-t}(z), p(z)),
\end{aligned}
\end{equation*}
which is a diffeomorphism with inverse
\begin{equation*}
(x,s) \longmapsto \phi^s_v (x).
\end{equation*}
Since $v$ is $\widehat{\pi}$-related to $u'$, this diffeomorphism commutes with the projections to $B$, as required.
\end{proof}

Let now $(X, H) \to (B, L)$ be a polarised holomorphic submersion with $\omega_X \in c_1(H)$ a symplectic form which restricts to a K\"ahler metric on the fibres. Then we consider the following setting:
\begin{enumerate}[label = (\roman*)]
\item $(\m{X}, \m{H}, \widehat{\pi}, p, S)$ is a smooth \emph{polarised family} of maps onto $B$ with central fibration $(X, H) \to B$, where we assume for simplicity that $S$ is a disk in $\bb{C}$. In particular the line bundle $\m{H}$ on $\m{X}$ restricts to a relatively ample line bundle $\m{H}_s$ on each fibration $\m{X}_s \to B$;
\item We assume to have an action of $\bb{C}^*$ on $S \times B$ which lifts to $(\m{X},\m{H})$ such that $\widehat{\pi}$ is $\bb{C}^*$-equivariant, so for $s \neq 0$ the fibrations $(\m{X}_s, \m{H}_s) \to B$ are all biholomorphic. 
\end{enumerate}

Ehresmann's theorem implies that we can locally trivialise the family in such a way that all $\m{X}_s$ are diffeomorphic, so we can interpret the family as a family of almost complex structures $\{J_s\}$ on $X$ which preserve the projection onto $B$.

Moreover, since $(\m{X}_s, H_s)$ is a small deformation of $(X, H)$, we have that $c_1(H_Y) = c_1(H)$. Then by Moser's theorem \cite[Theorem 7.2]{DaSilva_Lectures_symplectic_geometry} we can assume that $\omega_X$ is relatively K\"ahler with respect to the complex structures $J_s$, up to modifying $(\m{X}_s, H_s)$ by a small diffeomorphism.
So we can view a family $\m{X} \to B \times S$ as a family of complex structures on $X$ which keep $\pi$ a holomorphic submersion and $\omega_X$ a relatively K\"ahler metric.


\subsection{Relative Kuranishi's Theorem}\label{Subsec:relative_Kuranishi}

As in the previous section, we consider a holomorphic submersion $\pi:(X, H)\to B$ with a relative K\"ahler metric $\omega_X$ and a complex structure $J_0$ which is fibrewise cscK. We require a relative version of Sz\'ekelyhidi's and Br\"onnle's deformation theory described in \S \ref{Subsec:Sz_def_theory}.
Consider the map
\begin{equation*}
\begin{aligned}
P_{\m{V}}:C^\infty_0(X, \bb{R}) &\longrightarrow T_{J_0}^{0,1}\scr{J}_\pi \\
\varphi &\longmapsto \delbar_{\m{V}} (\mrm{grad}^{\omega_F}\varphi)^{1,0},
\end{aligned}
\end{equation*}
which is the relative version of the map defined in \ref{Def:map_P}.
Let $\widetilde{H}^1_{\m{V}}$ be the kernel in $T^{0,1}_{J_0}\scr{J}_\pi$ of the operator
\begin{equation*}
\square_{\m{V}} = P_{\m{V}}P_{\m{V}}^* + (\delbar^*\delbar)^2
\end{equation*}
inside $T_{J_0}^{0,1}\scr{J}_\pi$.  This is an elliptic operator because $P_{\m{V}}P_{\m{V}}^*$ is trivial in horizontal directions, where the adjoint is computed with respect to any K\"ahler metric on $X$ which restricts to $\omega_F$ vertically. So its kernel is a finite dimensional vector space and it can be described as
\begin{equation*}
\widetilde{H}^1_{\m{V}} = \set*{\left. \alpha \in T^{0,1}_{J_0}\scr{J}_\pi \right| P_{\m{V}}^*\alpha = 0 = \delbar \alpha}.
\end{equation*}
Fibrewise, $\square_{\m{V}}$ restricts to the operator \eqref{Eq:box_laplacian} and $\widetilde{H}^1_{\m{V}}$ restricts to the vector space described in \eqref{Eq:H_tilde_cscK}. In particular $\widetilde{H}^1_{\m{V}}$ depends only on the vertical part of the metric, $\omega_F$.

Consider the smooth fibre bundle $\m{K} \to B$, where $K_b = \mrm{Isom}(X_b, \omega_b)$ is the stabiliser of $\left. J_0 \right|_{X_b}$ for the $\scr{G}_b$-action. Notice that, thanks to our hypothesis, the groups $K_b$ are finite dimensional with dimension independent of $b$.
The group of global sections of $\m{K}$ is
\begin{equation}\label{Eq:rel_K}
K := \mrm{Isom}(\pi, \omega_X) = \set*{f \in \mrm{Aut}(X)| f^*\omega_X = \omega_X \ \text{and} \ \pi \circ f = \pi}.
\end{equation}

We next prove a relative version of Kuranishi's Theorem, adapted from Chen-Sun \cite[\S 6]{ChenSun_CalabiFlow}.

\begin{theorem}[Relative Kuranishi's Theorem]\label{Thm:relative_Kuranishi}
There exists a neighborhood of the origin $V \subset \widetilde{H}^1_{\m{V}}$ and a $K$-equivariant holomorphic map
\begin{equation*}
\Psi : V \rightarrow \scr{J}_\pi
\end{equation*}
such that:
\begin{enumerate}[label = \arabic*)]
\item $\Psi(0) = J_0$;
\item If $v_1, v_2 \in V$ and $\left.v_1\right|_b \in K_b^{\bb{C}}\cdot \left. v_2\right|_b$ for all $b$, and if $\Psi (v_1)$ is integrable, then $\left. \Psi(v_1) \right|_{X_b}$ is in the same $\scr{G}_b^c$-orbit as $\left. \Psi(v_2) \right|_{X_b}$;
\item For any $J \in \scr{J}_\pi$ integrable close to $J_0$, there exists $J'$ in the image of $\Psi$ such that, for all $b$, $J_b'$ is in the same $\scr{G}_b^c$-orbit as $J_b$.
\end{enumerate}
\end{theorem}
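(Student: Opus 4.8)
The plan is to imitate the classical Kuranishi construction (in the symplectic form used by Sz\'ekelyhidi, and adapted to the relative setting by Chen--Sun) fibrewise, but carried out uniformly over the base $B$, exploiting that the relevant elliptic operators $\square_{\m{V}}$, $P_{\m{V}}P_{\m{V}}^*$ are fibrewise and that $\widetilde{H}^1_{\m{V}}$ is already known to be a finite-dimensional vector space of the correct rank, so that it is the space of global sections of a genuine vector bundle over $B$. First I would set up the analytic framework: fix a smooth background K\"ahler metric $\omega_k=\omega_X+k\omega_B$ on $X$ (with $k\gg0$) so that Sobolev and Schauder theory are available on the compact manifold $X$, and work with the completion of $T^{0,1}_{J_0}\scr{J}_\pi$ in suitable $L^2_\ell$ (or $C^{\ell,\alpha}$) norms. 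The deformation equation for $J_0+A$ with $A\in T^{0,1}_{J_0}\scr{J}_\pi$ to be integrable is the usual Maurer--Cartan–type equation $\delbar A + \tfrac12[A,A]=0$; because all the elements of $\scr{J}_\pi$ agree with $J_0$ on $\m{H}^{\omega_X}$ and differ only vertically (Lemma \ref{Lemma:deformations_preserve_HV}), the nonlinearity and the operator $\delbar$ act only on the vertical part, so the whole construction stays inside $T^{0,1}_{J_0}\scr{J}_\pi$.

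The next step is the standard implicit-function/Banach fixed-point argument producing the Kuranishi slice. Let $G$ be the Green operator of $\square_{\m{V}}$ on $T^{0,1}_{J_0}\scr{J}_\pi$ and $\mathbb{H}$ the $L^2$-orthogonal projection onto $\widetilde{H}^1_{\m{V}}$; these are bounded between the relevant Sobolev completions because $\square_{\m{V}}$ is elliptic (the failure of ellipticity in horizontal directions is killed since $P_{\m{V}}P_{\m{V}}^*$ vanishes there, exactly as noted before the statement). For $v\in\widetilde{H}^1_{\m{V}}$ small one solves $A = v - \tfrac12\, \delbar^* G\, [A,A]$ by contraction in a small ball, obtaining a holomorphic (in $v$) map $v\mapsto A(v)$ with $A(0)=0$, $d_0A=\mathrm{id}$; setting $\Psi(v)=J_0+A(v)$ on a possibly shrunk ball $V\subset\widetilde{H}^1_{\m{V}}$ gives a map into $\scr{J}_\pi$ with $\Psi(0)=J_0$, which is (1). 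One should check that $A(v)$ really lands in the almost-complex-structures locus and in $\scr{J}_\pi$ rather than merely in its tangent model; this is the usual point that a small deformation of a compatible complex structure is again compatible, together with the fact that $\pi$ stays holomorphic because horizontal directions are untouched. $K$-equivariance of $\Psi$ follows because $K$ (which, being sections of the bundle $\m{K}\to B$ of the finite-dimensional fibrewise isometry groups, acts by isometries of $\omega_X$ fixing $J_0$) commutes with $\delbar$, with the bracket, and with $\square_{\m{V}}$, hence with $G$, $\delbar^*$, $\mathbb{H}$; averaging / uniqueness of the contraction solution then gives $\Psi(g\cdot v)=g\cdot\Psi(v)$, or one re-centres the construction $K$-equivariantly from the start.

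It remains to prove the two orbit statements (2) and (3), which are the relative analogues of the two properties in Theorem \ref{Thm:Kuranishi}, and this is where I expect the real work to be. For (3): given an integrable $J\in\scr{J}_\pi$ close to $J_0$, one must move it into the image of $\Psi$ by an element of the (formal) complexified gauge group, fibrewise. Following Chen--Sun, the point is that $\scr{G}^c$ does not exist as a group but its orbits do, parametrised by K\"ahler potentials via the map $F$ of \eqref{Eq:map_F}; one uses the fibrewise version of Proposition \ref{Prop:parallel_cpx-symp_structure} together with an implicit-function-theorem argument on $X$ — solving, to leading order, $\mathbb{H}^\perp(\text{stuff})=0$ by moving along the directions $\mathrm{Im}\,P^{\bb{C}}_{\m{V}}$ — to slide $J_b$ into the slice $\mathrm{Im}(\Psi)|_b$ for every $b$ simultaneously; uniformity over $b$ comes from the uniform invertibility of $\square_{\m{V}}$ restricted to $(\widetilde{H}^1_{\m{V}})^\perp$, which holds because the fibrewise kernels assemble into the bundle $E$-type object of constant rank. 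Statement (2) is the injectivity-type companion: if $v_1|_b$ and $v_2|_b$ lie in the same $K_b^{\bb{C}}$-orbit for all $b$ and $\Psi(v_1)$ is integrable, then one transports $\Psi(v_1)|_{X_b}$ to $\Psi(v_2)|_{X_b}$ by a fibrewise $\scr{G}_b^c$-element; this is the standard Luna-type argument that the slice meets each complexified orbit in a single $K^{\bb{C}}$-orbit, applied fibre by fibre, and the only new content is checking measurability/smoothness of the resulting transport in $b$ — but since $\Psi$, $v_1$, $v_2$ are all smooth in $b$ and the orbit relation is cut out by the vanishing of a fibrewise-elliptic expression, the transporting elements can be chosen to depend smoothly on $b$. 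The main obstacle, then, is not the formal scheme but the \emph{uniformity in $b$}: one must be careful that all the estimates (contraction constants, Green-operator bounds, size of the ball on which the implicit function theorem applies) can be taken independent of $b$, which is exactly what the hypothesis $\dim_{\bb{C}}\f{h}(X_b)=\mathrm{const}$ buys us, and one must verify that the fibrewise Chen--Sun arguments for (2) and (3), which a priori produce gauge transformations fibre by fibre, glue to genuine smooth objects over $B$.
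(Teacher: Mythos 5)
Your proposal follows essentially the same route as the paper's proof: identify complex structures near $J_0$ with $\alpha\in T^{0,1}_{J_0}\scr{J}_\pi$, solve the Maurer--Cartan equation using the Green operator of the fibrewise fourth-order elliptic operator and the implicit function theorem, achieve $K$-equivariance by averaging over the fibrewise Haar measure on $\m{K}\to B$, and handle statements (2) and (3) by the Chen--Sun gauge-fixing argument, with the key hypothesis of constant $\dim\f{h}(X_b)$ guaranteeing the uniformity in $b$ you correctly identify as the crux. One technical caution: your contraction formula $A = v - \tfrac12\delbar^*G[A,A]$ is the one adapted to a \emph{second-order} Laplacian, whereas here $\square_{\m{V}} = P_{\m{V}}P_{\m{V}}^* + (\delbar^*\delbar)^2$ is fourth order, so the correct identity (used in the paper, following Chen--Sun) is $\alpha|_b + G_b\delbar_b^*\delbar_b\delbar_b^*[\alpha|_b,\alpha|_b] = H_b\alpha|_b$; this changes the degree of the parametrix factor but not the structure of the argument.
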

\begin{proof}
We can identify any $J$ close to $J_0$ with an element $\alpha \in T_{J_0}^{0,1}\scr{J}_\pi$, i.e. with a $(0,1)$-form with values in the vertical holomorphic tangent bundle, plus the compatibility condition with $\omega_F$. So we have an embedding from an open subset in $T_{J_0}^{0,1}\scr{J}_\pi$ into $\scr{J}_\pi$:
\begin{equation*}
f : \m{U}(T_{J_0}^{0,1}\scr{J}_\pi )\hookrightarrow \scr{J}_\pi.
\end{equation*}

Given $b \in B$, we denote by $\rho_b$ the restriction $\scr{J}_\pi \to \scr{J}(X_b)$. Then $f_b(\alpha|_{X_b})= \rho_b \circ f(\alpha)$. We define now a new embedding $\hat{f}: \m{U}(T_{J_0}^{0,1}\scr{J}_\pi) \hookrightarrow \scr{J}_\pi$ as follows:
\begin{equation*}
\hat{f} (\alpha) = \int_{\m{K}/B} g^{-1} f(g \cdot \alpha) \dd \mu_{\m{K}/B} (g) \quad\text{i.e.}\quad\hat{f} (\alpha)|_{X_b} = \int_{K_b} g|_b^{-1} f(g|_b \cdot \alpha|_{X_b}) \dd \mu_{X_b} (g|_b),
\end{equation*}
where $\dd \mu_{\m{K}/B}$ is the fibrewise Haar measure on $\m{K} \to B$. Then $\hat{f}$ is such that
\begin{equation*}
\hat{f}(k \cdot \alpha)|_{b} = k|_{b} \cdot \hat{f}(\alpha) |_{b}.
\end{equation*}
Now, $J$ is integrable if and only if the corresponding $\alpha$ satisfies \cite[Lemma 6.1.2]{Huybrechts_ComplexGeometry}
\begin{equation}\label{Eq:Maurer-Cartan}
N(\alpha) = \delbar \alpha + [\alpha, \alpha] = 0.
\end{equation}
Note that if $J$ is integrable its restriction to each fibre is also integrable, so equation \eqref{Eq:Maurer-Cartan} holds also fibrewise.
For any $b \in B$, let $H_b : T_{J_0}^{0,1}\scr{J}_\pi|_{X_b} \to \tilde{H}^1_b$ be the $L^2_k$-orthogonal projection and let $G_b$ be the Green operator of $\square_b$:
\begin{equation*}
\idmatrix = G_b \square_b + H_b = \square_b G_b + H_b. 
\end{equation*}
For $\alpha$ integrable, a simple computation starting from \eqref{Eq:Maurer-Cartan} leads to the identity
\begin{equation*}
\alpha|_b + G_b\delbar_b^*\delbar_b\delbar_{b}^*[\alpha|_b, \alpha|_b] = H_b\alpha|_b.
\end{equation*}
Then we can define a map
\begin{equation*}
\begin{aligned}
F : B \times T_{J_0}^{0,1}\scr{J}_\pi &\to T_{J_0}^{0,1}\scr{J}_\pi \\
(b,\alpha) &\mapsto \alpha|_b + G_b\delbar_{b}^*\delbar_{b}\delbar_{b}^*[\alpha|_b, \alpha|_b],
\end{aligned}
\end{equation*}
where both spaces are endowed with the Sobolev $L^2_k$-norm. The differential of $F$ in the second component at the origin is the identity, since $G_b\delbar^*_b\delbar_b\delbar^*_b[\alpha|_b, \alpha|_b]$ is quadratic in $\alpha$.
Hence by the implicit function theorem we can locally invert $F$ and the inverse varies smoothly with $b$. We consider the inverse restricted to an open ball in $\tilde{H}^1_{\m{V}}$, which we define to be $V$, and for $x\in V$ we denote it by $\alpha(x)$.
Thus we have a family
\begin{equation}\label{Eq:family_construction}
U:=\set{\alpha(x) | x \in V} \subset T_{J_0}^{0,1}\scr{J}_\pi,
\end{equation}
and we can define
\begin{equation*}
\begin{aligned}
\Psi : V &\to \scr{J}_\pi \\
x &\mapsto \hat{f}(\alpha(x)).
\end{aligned}
\end{equation*}

We begin by proving that this map satisfies the required properties. Denoting
\begin{equation*}
U^{\mrm{int}} = \set*{\alpha(x)| N(\alpha(x)) = 0}
\end{equation*}
and
\begin{equation*}
U^{\mrm{int}}_{\m{V}} = \set*{\alpha(x) | N_b(\alpha(x)|_b) = 0 \ \forall b \in B},
\end{equation*}
we want to prove that $\mrm{U}^\mrm{int}$ is an analytic subset of $U$. We begin by showing that $U^{\mrm{int}}_{\m{V}}$ is an analytic subset of $U$.
On each fibre $X_b$, $\alpha(x)|_b$ is integrable if and only if $H_b[\alpha(x)|_b, \alpha(x)|_b] = 0$. Indeed
\begin{equation}\label{Eq:N(alpha(x))}
\begin{aligned}
N_b\left(\alpha(x)|_b\right) &= \delbar_b \alpha(x)|_b + \left[\alpha(x)|_b, \alpha(x)|_b\right]\\
&= 2G_b\delbar^*_b\delbar_b\delbar^*_b\left[N(\alpha(x)|_b), \alpha(x)|_b\right] + H\left[\alpha(x)|_b, \alpha(x)|_b\right].
\end{aligned}
\end{equation}
The map
\begin{equation*}
\begin{aligned}
B \times V &\to \widetilde{H}_{\m{V}}\\
(b,v) &\mapsto H_b[\alpha(x)|_b, \alpha(x)|_b]
\end{aligned}
\end{equation*}
is holomorphic, so $U^{\mrm{int}}_{\m{V}}$ is an analytic subset of $U$.
Then, denoting $\overline{U}^{\mrm{int}}$ the analytic family given by the Kuranishi Theorem \cite[Lemma 6.1]{ChenSun_CalabiFlow} applied to $X$, we see that $U^{\mrm{int}}$ is the intersection of $\overline{U}^{\mrm{int}}$ and $U^{\mrm{int}}_{\m{V}}$, so it is itself an analytic family.
Moreover, when restricted to the fibre $X_b$, both maps $f$ and $F$ are $K_b$-equivariant and holomorphic, so (2) is also proved.

We prove (3). Let $J_x = \Psi(x) \in \scr{J}_\pi$, and fix $b \in B$. Given $\xi \in \Gamma(X, \m{V})$ a vertical vector field, define
\begin{equation*}
\begin{aligned}
F_\xi : X &\to X \\
p &\mapsto \mrm{exp}_p(\xi_p, g_{\pi(p)}),
\end{aligned}
\end{equation*}
where $g_{\pi(p)}$ is the Riemannian metric on the fibre $X_{\pi(p)}$ with respect to $J_x$.
Following \cite[Lemma 6.1]{ChenSun_CalabiFlow}, we fix $v \in V_\sigma \subset V$, thought of as a tangent vector at $x$ in $V$, and we define a map
\begin{equation*}
\begin{aligned}
R_{b} : \m{U}(L^2_{k+2}(X_b, \bb{C})) &\to L^2_{k+2}(X_b, TX_b) \\
\varphi_b &\mapsto \xi_b(\varphi_b, v|_b)
\end{aligned}
\end{equation*}
such that $R_{b}(0) = 0$ and
\begin{enumerate}[label = (\roman*)]
\item $\dd_0 R_{b} (\phi_b) = \mrm{grad}^{\omega_b} (\mrm{Re}(\phi_b)) + J_v|_{X_b} \mrm{grad}^{\omega_b}(\mrm{Im}(\phi_b))$;
\item \label{Item:preserves_G^c-orbits}$F_{{\xi}_b(\varphi_b, v|_b)}^*J_x |_{X_b} \in \scr{G}^c_b \cdot J_x|_{X_b}$.
\end{enumerate}

Since this map is defined via the implicit function theorem, the vector field $\xi_b(\varphi_b,v|_b)$ varies smoothly with $b$, thus defining a global vertical vector field on $X$ (more details about this technique of using the implicit function theorem to prove smooth dependence on $b$ are given in the proof of Proposition \ref{Prop:Kuranishi_map_cscK} below).
So we can define a global map
\begin{equation*}
\begin{aligned}
R : \m{U}(L^2_{k+2}(X, \bb{C})) &\to L^2_{k+2}(X, \m{V}) \\
\varphi &\mapsto Y(\varphi, v) \quad \text{s.t.} \quad \xi(\varphi, v)|_{X_b} = \xi(\varphi|_b, v|_b).
\end{aligned}
\end{equation*}
The complex structure $F_{Y(\varphi, v)}^*J_x$ on $X$ satisfies the following properties:
\begin{enumerate}
\item is compatible with $\omega_X$. Indeed
\begin{equation*}
\begin{aligned}
&\omega_X(F_{Y(\varphi, v)}^*J_x \cdot, \cdot) + \omega_X(\cdot, F_{Y(\varphi, v)}^*J_x\cdot) =  \\
&\omega_F(F_{Y(\varphi, v)}^*J_x \cdot, \cdot) + \omega_F(\cdot, F_{Y(\varphi, v)}^*J_x\cdot) + \omega_{X, \m{H}}(J_x\cdot, \cdot) + \omega_{X, \m{H}}(\cdot, J_x\cdot).
\end{aligned}
\end{equation*}
The first two terms sum to zero because the complex structure $F_{\xi(\varphi, v)}^*J_x$ is fibrewise compatible with the fibrewise K\"ahler form. The last two terms sum to zero since $J_x \in \scr{J}_\pi$.
\item preserves $\pi$, since the differential commutes with pull-back.
\item satisfies property \ref{Item:preserves_G^c-orbits} above for every $b \in B$.
\end{enumerate}
Let now $\alpha(\varphi, v)$ be the $(0,1)$-form with values in the holomorphic vertical tangent space which is the pre-image of $F_{\xi(\varphi, v)}^*J_x$ via $\Psi$. Then from \eqref{Eq:N(alpha(x))} it follows that $\alpha(\varphi, v)$ fibrewise satisfies an elliptic equation of the form
\begin{equation}\label{Eq:elliptic_alpha(phi,v)}
\square_{\m{V}} T(\varphi, v, N(\alpha)) = 2\delbar^*_{\m{V}}\delbar_{\m{V}}\delbar^*_{\m{V}}\left[T(\varphi, v, N(\alpha)), S(\varphi, v, \alpha)\right],
\end{equation}
where $T(0, v, N) = N$ and $S(0, v, \alpha) = \alpha$.

Let now $J \in \scr{J}_\pi^{\mrm{int}}$ be close to $J_0$ in $L^2_k$. The proof now goes exactly as in \cite[Lemma 6.1]{ChenSun_CalabiFlow}, and we report it here for completeness. Since $J$ is integrable, the corresponding vector-valued $(0,1)_{J_0}$-form $\alpha_J$ satisfies \eqref{Eq:elliptic_alpha(phi,v)} for all $(\varphi, v)$. Consider the $L^2_k$-projections
\begin{equation*}
\Pi_1 : L^2_k\left(T_{J_0}^{0,1}\scr{J}_\pi\right) \to \mrm{Im}(P_{\m{V}}), \qquad \Pi_2: L^2_k\left(T_{J_0}^{0,1}\scr{J}_\pi\right) \to \widetilde{H}^1_{\m{V}}
\end{equation*}
and consider the map $\chi : \m{U}(L^2_{k+2}(X, \bb{C})) \times V_\sigma \to \mrm{Im}(P_{\m{V}})\times \widetilde{H}^1_{\m{V}}$ defined by
\begin{equation*}
(\varphi, v) \mapsto \left(\Pi_1\left(F_{Y(\varphi, v)}^*J_v\right), \Pi_2\left(F_{Y(\varphi, v)}^*J_v\right)\right).
\end{equation*}

Remark that if $\alpha, \beta \in T_{J_0}^{0,1}\scr{J}_\pi$ satisfy \eqref{Eq:elliptic_alpha(phi,v)} and they are such that $(\Pi_1\alpha, \Pi_2\alpha) = (\Pi_1\beta, \Pi_2\beta)$, then by ellipticity it follows that $\alpha = \beta$.
The differential of the map $\chi$ is $\dd_{(0,0)}\chi (\varphi, v) = (P_{\m{V}}(\varphi), v)$: it is surjective and the kernel corresponds to fibrewise holomorphy potentials, so it is finite dimensional fibrewise. Thus again by the implicit function theorem, there exist $(\varphi, v)$ such that $(\Pi_1(\alpha_J), \Pi_2(\alpha_J)) = \chi(\varphi, v)$, hence by the ellipticity argument $\alpha_J = F_{Y(\varphi, v)}^*J_x$.
\end{proof}

\begin{remark}[Versal deformations]
The proof of the relative Kuranishi Theorem guarantees the existence of \emph{versal} deformations, i.e. complete and effective. More precisely, a deformation $\m{X} \to B \times S$ with central fibre $(X, J_0)$ is called \emph{versal} if any other family $\m{X}' \to B \times S'$ (centred at $J_0$) is obtained by pullback via a map $f : S' \to S$, which might not be unique but whose differential is uniquely determined.
This is proven in the third step of Theorem \ref{Thm:relative_Kuranishi}, where a single complex structure $J$ is considered instead of a second family $\{ J_{s'} \}$. The pullback is given by the exponential map $F_{\xi}$, where $\xi = \xi(\varphi, v)$ is uniquely determined by the vector $v$, which is tangent to the complex structure $J$.
\end{remark}

\begin{proposition}\label{Prop:Kuranishi_map_cscK}
Possibly after shrinking $V$, we can perturb the map $\Psi$ to
\begin{equation}\label{Eq:rel_Kuranishi_map_deformed}
\Phi : V \to \scr{J}_\pi
\end{equation}
such that:
\begin{equation*}
S_{\m{V}}\left(\omega_X, \Phi(x)\right) \in C^\infty(E, J_0)
\end{equation*}
\end{proposition}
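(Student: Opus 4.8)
The plan is to run, in the relative setting, the normalisation argument behind the corollary to Theorem~\ref{Thm:Kuranishi}: for each $x\in V$ we flow $\Psi(x)$ in the ``complexified'' vertical direction by a fibrewise K\"ahler potential $\varphi\in C^\infty(R)$, chosen through the implicit function theorem so that the vertical scalar curvature of the resulting complex structure has vanishing $C^\infty(R)$-component, hence lies in $C^\infty(E,J_0)=\mrm{ker}_0\,\m{D}_{\m{V}}^*\m{D}_{\m{V}}$. The feature that distinguishes this from the single-fibre case is that the perturbation should come from \emph{one} application of the implicit function theorem to a smooth map of Banach manifolds of sections over $X$, rather than fibre by fibre, so that smooth dependence on $b\in B$ is automatic.

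First I would set up the complexified vertical deformation. Work with Sobolev completions $L^2_{k+4}(R)$, $L^2_k(R)$ of $C^\infty(R)$ for $k$ large. For $(\varphi,J)$ near $(0,J_0)$ in $C^\infty(R)\times\scr{J}_\pi$, the fibrewise analogue of the map $F$ of \eqref{Eq:map_F} --- obtained by applying Proposition~\ref{Prop:parallel_cpx-symp_structure} on each fibre with fibre-preserving diffeomorphisms, which solve a Moser-type equation whose coefficients are smooth on $X$ and hence depend smoothly on $b$ --- produces $F_\varphi J\in\scr{J}_\pi$ such that $(X,\omega_X,F_\varphi J)$ is fibrewise isomorphic to the deformation of $(X,\omega_X,J)$ by $\varphi$ in the fibrewise K\"ahler class. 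Thus $(\varphi,J)\mapsto F_\varphi J$ is smooth, $F_0 J=J$, and $\dd_0(\varphi\mapsto F_\varphi J_0)=J_0P_{\m{V}}(\varphi)$ as in \eqref{Eq:map_F}.

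Next, define
\[
\m{N}:\m{U}\!\left(L^2_{k+4}(R)\right)\times V\longrightarrow L^2_k(R),\qquad \m{N}(\varphi,x)=p_R\!\left(S_{\m{V}}(\omega_X,F_\varphi(\Psi(x)))-\widehat{S}\right),
\]
where $p_R$ is the fibrewise-$L^2$ projection onto $C^\infty(R)$ (a smooth bundle map, built from $\omega_X$ and $J_0$) and $\widehat{S}$ is the fibrewise topological average of the vertical scalar curvature. Then $\m{N}$ is smooth, $\m{N}(0,0)=0$ because $\Psi(0)=J_0$ is fibrewise cscK, and $\del_\varphi\m{N}(0,0)$ is fibrewise the linearisation of scalar curvature in a complexified direction, namely $-\m{D}_b^*\m{D}_b$; since $\m{D}_{\m{V}}^*\m{D}_{\m{V}}$ is fibrewise self-adjoint it preserves $C^\infty(R)=C^\infty(E)^\perp$, so $\del_\varphi\m{N}(0,0)=-\m{D}_{\m{V}}^*\m{D}_{\m{V}}|_{C^\infty(R)}$, which is a topological isomorphism $L^2_{k+4}(R)\to L^2_k(R)$ because $\m{D}_{\m{V}}^*\m{D}_{\m{V}}$ is fibrewise elliptic with fibrewise kernel $C^\infty(E)$ of locally constant --- hence constant --- rank, by our standing hypothesis on $\dim\f{h}(X_b)$. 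The implicit function theorem then yields a smaller ball $V'\subset V$ about $0$ and a smooth map $x\mapsto\varphi_x\in L^2_{k+4}(R)$ with $\varphi_0=0$ and $\m{N}(\varphi_x,x)=0$; fibrewise elliptic regularity, uniform in $b$, gives $\varphi_x\in C^\infty(R)$, smoothly in $x$. Setting $\Phi(x):=F_{\varphi_x}(\Psi(x))$ and shrinking $V$ to $V'$, the function $S_{\m{V}}(\omega_X,\Phi(x))-\widehat{S}$ has zero $C^\infty(R)$-component and fibrewise mean zero, hence lies in $C^\infty(E,J_0)$, as required. By uniqueness in the implicit function theorem together with the $K$-equivariance of $\Psi$, $S_{\m{V}}$, $p_R$ and of the construction of $F_\varphi$, the perturbed $\Phi$ is again $K$-equivariant.

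The genuine difficulty is not the Fredholm bookkeeping but the uniformity in $b$: one must ensure that the fibrewise ingredients --- the diffeomorphisms realising the complexified action in Proposition~\ref{Prop:parallel_cpx-symp_structure}, the projection $p_R$ and the inverse of $\m{D}_{\m{V}}^*\m{D}_{\m{V}}$ on $C^\infty(R)$, and the solution $\varphi_x$ --- glue to honestly smooth data on $X$ rather than merely existing on each fibre. The resolution is to phrase everything in terms of the relative operators $\m{D}_{\m{V}}^*\m{D}_{\m{V}}$, $P_{\m{V}}$, $p_R$, which are bona fide differential operators and bundle maps on $X$ (built from the smooth data $\omega_X$, $J_0$) whose fibrewise ellipticity is uniform because $B$ is compact and $\dim\f{h}(X_b)$ is constant, and then apply the implicit function theorem once, globally. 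This is the prototype of the ``implicit function theorem with smooth dependence on $b$'' technique invoked in the proof of Theorem~\ref{Thm:relative_Kuranishi}.
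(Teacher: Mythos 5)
Your proof is correct and takes essentially the same route as the paper: both introduce the fibrewise complexified flow $F_\varphi$, form the $R$-projection of the vertical scalar curvature of $F_\varphi(\Psi(x))$, identify the $\varphi$-linearisation at the origin with the vertical Lichnerowicz (equivalently $P_{\m{V}}^*P_{\m{V}}$) restricted to $C^\infty(R)$, observe this is invertible because the kernel $C^\infty(E)$ has constant rank, and apply the implicit function theorem. The only difference is presentational --- the paper keeps $b\in B$ as an explicit parameter and invokes smooth dependence of the IFT solution on parameters, whereas you apply the IFT once on global section spaces over $X$ --- and you additionally spell out the $K$-equivariance and the $\widehat{S}$-normalisation, which the paper leaves implicit.
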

Remark that the claim holds fibrewise as a consequence of Theorem \ref{Thm:Kuranishi}, so we just need to check that the complex structure we find on each fibre $X_b$ varies smoothly with $b$. This relies on the fact that the proof involves the implicit function theorem.

\begin{proof}[Proof of Proposition \ref{Prop:Kuranishi_map_cscK}]
For every $b \in B$, the Lie algebra of $K_b$ is $\f{k}_b = \ker \m{D}_b^*\m{D}_b$, which is exactly the fibre $E_b$ of the vector bundle $E$ defined in \S \ref{Subsec:splitting_function_space}. Let $U_l$ be a small ball around the origin of $L^{2,l}(R)$. Define a map
\begin{equation*}
\begin{aligned}
G : B \times V \times U_l &\to L^{2,l-4}(R) \\
(b,x,\varphi) &\mapsto \pi_{L^{2,l-4}(R)} S \left(\omega_b, F_{\varphi_b}(\Psi(x)|_{b})\right),
\end{aligned}
\end{equation*}
where $\Psi(x)|_b$ defines an element in $\scr{J}_\pi$ from Theorem \ref{Thm:relative_Kuranishi}, and the map $F$ is the one defined in \eqref{Eq:map_F}.
The derivative along the third component of $G$ at 0 of a function $\varphi$ is given by $P_{\m{V}}^*P_{\m{V}}(\varphi)$, which is an isomorphism $L^{2,l}(R) \to L^{2,l-4}(R)$.
By the implicit function theorem, for every $b$, $\Psi$ can be perturbed to $\Phi_b : V_b \to \scr{J}(X_b)$ in such a way that $S(\omega_b, \Phi_b(x)) \in \f{k}_b$, and $\Phi_b$ varies smoothly with $b$. Thus we find a map
\begin{equation*}
\begin{aligned}
\Phi : V &\to \scr{J}_\pi
\end{aligned}
\end{equation*}
such that $S_{\m{V}}(\omega_X, \Phi(x)) \in C^\infty(E)$.
\end{proof}

Let us return now to considering a holomorphic submersion $\pi: X \to B$ with a relatively cscK metric $(\omega_X, J_0)$. By viewing $\omega_X$ as fixed and varying the complex structure, we consider a family $\{J_s\}$ such that $(\omega_X, J_s)$ are relatively K\"ahler metrics on $X \to B$.
Theorem \ref{Thm:relative_Kuranishi} allows us to extend definition of the sections $\mu_s$ \eqref{Eq:mu_s_section} and $\nu$ \eqref{Eq:nu_section} of $C^\infty(E)$ to the following maps.
\begin{definition}\label{Def:mu_pi_nu_pi}
Define the maps
\begin{equation*}
\begin{aligned}
\mu_\pi : V &\to C^{\infty}(E, J_0)\\
x &\mapsto \mrm{Scal}_{\m{V}}(\omega_X, \Phi(x))
\end{aligned}
\end{equation*}
and
\begin{equation*}
\begin{aligned}
\nu_\pi : \widetilde{H}^1 &\to C^\infty(E) \\
v &\mapsto \nu_\pi(v) \qquad
\end{aligned}
\end{equation*}
where $\nu_\pi(v)|_b = \nu_b(v|_{X_b})$, for $\nu_b$ defined in \ref{Def:map_nu}.
\end{definition}
\begin{remark}
If $x_s\in V$ corresponds to $J_s$ via the relative Kuranishi map \eqref{Eq:rel_Kuranishi_map_deformed}, we have that $\mu_\pi(x_s) = \mu_s$, where $\mu_s \in C^\infty(E)$ is the section defined in \eqref{Eq:mu_s_section}. Similarly, if $v \in \widetilde{H}^1_{\m{V}}$ is the deformation of the family $\{J_s\}$, then $\nu_\pi(v)$ is the section $\nu \in C^\infty(E)$ defined in \eqref{Eq:nu_section}. By applying Proposition \ref{Prop:Kuranishi_map_cscK} we can perturb $\mu_\pi$ to end up in $C^\infty(E)$, so we do not see the projection as in \eqref{Eq:mu_s_section}. 
\end{remark}

From the definition \eqref{Eq:mu_s_section}, the perturbation given by Proposition \ref{Prop:Kuranishi_map_cscK} and the expansion \eqref{Eq:mu_s_expansion} of $\mu_s \in C^\infty(E)$ it follows that, if $v \in \widetilde{H}^1_{\m{V}}$ is the deformation of the family $\{J_s\}$,
\begin{equation}\label{Eq:mu_pi_expansion}
\mrm{Scal}_{\m{V}}(\omega_X, J_s) = \mu_\pi(x_s) = \widehat{S}_b + \frac{s^2}{2} \nu(v) + O(s^3).
\end{equation}

\section{Extremal metrics on the total space}\label{Sec:adiabatic_limit}
As before, let $\widehat{\pi}:(\m{X},\m{H}) \to (B,L) \times S$ be a degeneration of a fibration $\pi_Y:(Y, H_Y) \to B$ with central fibration $\pi:(X, H)\to B$, endowed with a $\bb{C}^*$-action on $B \times S$ which lifts to $(\m{X}, \m{H})$. Let $\omega_X$ be a relatively cscK metric on $X$; from the discussion at the end of Section \ref{Subsec:family_submersions} we can assume that $\omega_X$ is relatively K\"ahler also on $Y$. It follows that the general fibrations $X_s \to B \times \{ s\}$ are all biholomorphic to $Y \to B$.
For $k \gg 0$, consider the K\"ahler form
\begin{equation*}
\omega_k = \omega_X + k \pi^*\omega_B,
\end{equation*}
where $\omega_B$ is a fixed K\"ahler metric on $B$. We will omit the pull-back in the notation, and write just $\omega_X + k \omega_B$.

We will later need to choose $\omega_B$ appropriately, to produce cscK and extremal metrics on $Y$. To do so, we need the following definitions from the moduli theory of cscK manifolds \cite{Fujiki_Schumacher_Moduli_cscK,Inoue_ModuliSpaceFano,DervanNaumann_ModuliCscK}. In \cite{DervanNaumann_ModuliCscK} it is shown that there exists a complex space $\m{M}$ which is the moduli space of polarised cscK manifolds and that there exists a K\"ahler metric on $\m{M}$ called the \emph{Weil-Petersson metric}. Moreover, since our central fibration $\pi: X \to B$ has cscK fibres, there is an induced map $q:B \to \m{M}$. The pull-back via $q$ of the Weil-Petersson metric, denoted $\alpha_{WP}$, is a closed smooth $(1,1)$-form on $B$, and it has the following expression \cite{DervanNaumann_ModuliCscK, Fujiki_Schumacher_Moduli_cscK}:
\begin{equation}\label{Eq:WPform}
\alpha_{WP} = \frac{\widehat{S}_b}{m+1} \int_{X/B} \omega_X^{m+1} - \int_{X/B} \rho\wedge\omega_X^m,
\end{equation}
where $\rho$ is the relative Ricci form defined in \S \ref{Subsec:OSC_classic} and $m$ is the dimension of the fibres. Notice that $\alpha_{WP}$ is positive semi-definite in general.

\begin{definition}\label{Def:twisted_cscK_extremal}
We say that $\omega_B \in c_1(L)$ is
\begin{enumerate}
\item \emph{twisted cscK} with respect to $\alpha$ if $\mrm{Scal}(\omega_B) - \Lambda_{\omega_B} \alpha = c_B$;
\item \emph{twisted extremal} with respect to $\alpha$ if $\mrm{Scal}(\omega_B) - \Lambda_{\omega_B} \alpha \in \ker \m{D}_B$, where $\m{D}_B$ is the Lichnerowicz operator on $B$.
\end{enumerate}
\end{definition}
\begin{definition}\label{Def:aut(q)}
We define the group of automorphisms of the moduli map to be
\begin{equation*}
\mrm{Aut}(q) = \set*{f \in \mrm{Aut}(B,L) | q \circ f = f}.
\end{equation*}
\end{definition}
In \cite[\S 3.2]{DervanSektnan_ExtremalFibrations} it is shown that, denoted $h_B$ the twisted extremal holomorphy potential, the linearisation of the twisted extremal operator at a solution is given by the map
\begin{equation}\label{Eq:linearisation_twisted_eq}
\m{L}_{\alpha}(\varphi) = -\m{D}_B^*\m{D}_B \varphi + \frac{1}{2}\langle \nabla \Lambda_{\omega_B}\alpha, \nabla\varphi\rangle + \langle i\del\delbar\varphi, \alpha\rangle.
\end{equation}
Moreover, in \cite[Proposition 3.5]{DervanSektnan_ExtremalFibrations} it is proven that kernel of this operator is given by the holomorphy potentials of those vector fields whose flow lies in $\mrm{Aut}(q)$.

In what follows will also need the groups of automorphisms of the projections $\pi$ and $\pi_s$.
\begin{definition}\label{Def:aut(pi)}
For $\pi:X \to B$ we define
\begin{equation*}
\mrm{Aut}(\pi) = \set*{f \in \mrm{Aut}(X, H) | \pi \circ f = \pi},
\end{equation*}
and similarly for $\pi_s : X_s \to B$.
\end{definition}

\subsection{Expansion of the scalar curvature}
In this subsection, we derive an expansion of the scalar curvature $\mrm{Scal}(\omega_k, J_s)$, in powers of $s$ and inverse powers of $k$, from which we deduce the optimal symplectic connection equation \eqref{Eq:genOSC}.
Recall from \cite[\S 4.1]{DervanSektnan_OSC1} that
\begin{equation*}
\begin{aligned}
\mrm{Scal}(\omega_k, J_s) = \mrm{Scal}_\m{V}(\omega_X, J_s) + k^{-1}\left( \mrm{Scal}(\omega_B) + \Updelta_{\m{V}} (\Lambda_{\omega_B} (\omega_X)_{\m{H}}) + \Lambda_{\omega_B} \rho_{\m{H}}\right) + O\left(k^{-2}\right).
\end{aligned}
\end{equation*}
Clearly, the $k^{-1}$ term - denoted $T_{k^{-1}}$ - depends on $s$, so we can write
\begin{equation*}
T_{k^{-1}}(\omega_B, \omega_X, J_s) = T_{k^{-1}}(\omega_B, \omega_X, J_0) + O(s) .
\end{equation*}
\begin{proposition}
By choosing $s^2 = \lambda k^{-1}$ for $\lambda >0$ and using the expansion \eqref{Eq:mu_pi_expansion} for the vertical scalar curvature we obtain
\begin{equation*}
\mrm{Scal}(\omega_k, J_s) = \widehat{S}_b + k^{-1}\left( \psi_B + p_E(\Updelta_{\m{V}} (\Lambda_{\omega_B} (\omega_X)_{\m{H}}) + \Lambda_{\omega_B} \rho_{\m{H}}) + \frac{\lambda}{2} \nu(v) + \psi_R\right) + O\left(k^{-3/2}\right)
\end{equation*}
where:
\begin{enumerate}[label=(\roman*)]
\item $\psi_B$ is a function on the base given by
\begin{equation*}
\psi_B = \mrm{Scal}(\omega_B) + \int_{X/B} \left(\Lambda_{\omega_B}\rho_{\m{H}}\right) \omega_X^m.
\end{equation*}
\item $\psi_R \in C^{\infty}(R, J_0)$.
\end{enumerate}
\end{proposition}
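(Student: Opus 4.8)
The plan is to feed the expansion \eqref{Eq:mu_pi_expansion} of the vertical scalar curvature into the Dervan--Sektnan expansion recalled just above, impose the relation $s^2 = \lambda k^{-1}$ to merge the two asymptotic parameters into one, and finally decompose the resulting $k^{-1}$-coefficient along the splitting \eqref{Eq:splitting_function_space}.

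First I would write the recalled expansion as
\begin{equation*}
\mrm{Scal}(\omega_k, J_s) = \mrm{Scal}_{\m{V}}(\omega_X, J_s) + k^{-1}\,T_{k^{-1}}(\omega_B, \omega_X, J_s) + O(k^{-2}),
\end{equation*}
where $T_{k^{-1}}(\omega_B, \omega_X, J) = \mrm{Scal}(\omega_B) + \Updelta_{\m{V}}(\Lambda_{\omega_B}(\omega_X)_{\m{H}}) + \Lambda_{\omega_B}\rho_{\m{H}}$ and, since $\{J_s\}$ is a smooth family through $J_0$, $T_{k^{-1}}(\omega_B, \omega_X, J_s) = T_{k^{-1}}(\omega_B, \omega_X, J_0) + O(s)$. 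Into the first summand I would insert \eqref{Eq:mu_pi_expansion}, namely $\mrm{Scal}_{\m{V}}(\omega_X, J_s) = \widehat{S}_b + \tfrac{s^2}{2}\nu(v) + O(s^3)$, with the remainder uniform in $b$ by smoothness of $\mu_\pi$ (Proposition \ref{Prop:Kuranishi_map_cscK}). Setting $s^2 = \lambda k^{-1}$ turns $s$ into $\lambda^{1/2}k^{-1/2}$ and $s^3$ into $\lambda^{3/2}k^{-3/2}$, so $O(s^3)$, $k^{-1}\!\cdot\! O(s)$ and $O(k^{-2})$ are all absorbed into $O(k^{-3/2})$, leaving
\begin{equation*}
\mrm{Scal}(\omega_k, J_s) = \widehat{S}_b + k^{-1}\Bigl(\mrm{Scal}(\omega_B) + \Updelta_{\m{V}}(\Lambda_{\omega_B}(\omega_X)_{\m{H}}) + \Lambda_{\omega_B}\rho_{\m{H}} + \tfrac{\lambda}{2}\nu(v)\Bigr) + O(k^{-3/2}).
\end{equation*}

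The second step is to decompose the $k^{-1}$-coefficient according to $C^\infty(X,\bb{R}) = C^\infty(B)\oplus C^\infty(E)\oplus C^\infty(R)$. The term $\mrm{Scal}(\omega_B)$ is pulled back from $B$, hence already in $C^\infty(B)$. For the remaining part I would use $(\omega_X)_{\m{H}} = m^*(F_{\m{H}}) + \pi^*\beta$: since $\Lambda_{\omega_B}\pi^*\beta$ is constant along the fibres it is annihilated by $\Updelta_{\m{V}}$, so $\Updelta_{\m{V}}(\Lambda_{\omega_B}(\omega_X)_{\m{H}}) = \Updelta_{\m{V}}(\Lambda_{\omega_B} m^*(F_{\m{H}}))$, which has fibrewise mean value zero because the fibrewise integral of any $\Updelta_{\m{V}}$-image vanishes by Stokes. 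Hence, writing $g := \Updelta_{\m{V}}(\Lambda_{\omega_B}(\omega_X)_{\m{H}}) + \Lambda_{\omega_B}\rho_{\m{H}}$, its component in $C^\infty(B)$ is $\int_{X/B}(\Lambda_{\omega_B}\rho_{\m{H}})\,\omega_X^m$, its component in $C^\infty(E)$ is $p_E(g) = p_E(\Updelta_{\m{V}}(\Lambda_{\omega_B}(\omega_X)_{\m{H}}) + \Lambda_{\omega_B}\rho_{\m{H}})$, and I would denote its component in $C^\infty(R,J_0)$ by $\psi_R$. Together with $\nu(v) \in C^\infty(E)$, the three pieces assemble into $\psi_B + p_E(\Updelta_{\m{V}}(\Lambda_{\omega_B}(\omega_X)_{\m{H}}) + \Lambda_{\omega_B}\rho_{\m{H}}) + \tfrac{\lambda}{2}\nu(v) + \psi_R$ with $\psi_B = \mrm{Scal}(\omega_B) + \int_{X/B}(\Lambda_{\omega_B}\rho_{\m{H}})\,\omega_X^m$, as claimed.

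The computation is essentially bookkeeping, and the only genuine care is needed in two places: the $O(s^3)$ remainder in \eqref{Eq:mu_pi_expansion} and the $O(s)$-dependence of $T_{k^{-1}}$ must be controlled uniformly over $B$ in a sufficiently strong H\"older or Sobolev norm, so that after setting $s^2 = \lambda k^{-1}$ they genuinely merge into a single uniform $O(k^{-3/2})$ term on $X$; this uniformity is exactly what the implicit-function-theorem constructions of Section \ref{Sec:deformations_of_fibrations} (the relative Kuranishi map and the perturbation of Proposition \ref{Prop:Kuranishi_map_cscK}, built with smooth dependence on the base point) provide. The only other subtlety is the observation, used above, that $(\omega_X)_{\m{H}}$ and $m^*(F_{\m{H}})$ may be used interchangeably inside $\Updelta_{\m{V}}\Lambda_{\omega_B}(\cdot)$, which is precisely the cancellation of $\Updelta_{\m{V}}(\Lambda_{\omega_B}\pi^*\beta)$.
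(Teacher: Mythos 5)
Your proof is correct and follows essentially the same route as the paper's: substitute the expansion \eqref{Eq:mu_pi_expansion} into the $k^{-1}$-expansion of $\mrm{Scal}(\omega_k, J_s)$ recalled from Dervan--Sektnan, observe that setting $s^2 = \lambda k^{-1}$ pushes the $O(s^3)$ and $k^{-1}O(s)$ remainders into $O(k^{-3/2})$, and decompose the $k^{-1}$-coefficient along \eqref{Eq:splitting_function_space}. The only difference is that the paper cites \cite{DervanSektnan_OSC1} for the decomposition and additionally records the identity $\int_{X/B}(\Lambda_{\omega_B}\rho_{\m{H}})\omega_X^m = -\Lambda_{\omega_B}\alpha_{WP}$ (relevant for identifying $\psi_B$ with the twisted scalar curvature, used afterwards but not needed for the proposition as stated), whereas you spell out the splitting directly via the observations that $\Updelta_{\m{V}}$ annihilates pull-backs and that $\Updelta_{\m{V}}$-images have fibrewise mean zero.
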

\begin{proof}
For the first item, in \cite[\S 4.1]{DervanSektnan_OSC1} based on \cite[Lemma 2.3]{Fine_fibrations_CMlinebundle} it is shown that
\begin{equation*}
\int_{X/B} (\Lambda_{\omega_B}\rho_{\m{H}}) \omega_X^m = - \Lambda_{\omega_B} \alpha_{\msc{WP}},
\end{equation*}
where $\alpha_{\msc{WP}}$ is the Weil-Petersson metric defined in \eqref{Eq:WPform}.
Moreover, the $k^{-1}$-term depends only on $J_0$ because the $O(s)$-part ends up in $O\left(k^{-3/2}\right)$. Its expression is obtained following \cite[\S 4.2]{DervanSektnan_OSC1}.
\end{proof}

Thus the optimal symplectic connection equation implies that the $C^\infty(E)$-part of the $k^{-1}$-term of the expansion of the scalar curvature vanishes. Note that $\widehat{S}_b$ is a topological constant independent on $b$, since all the fibres are diffeomorphic.

\subsection{Linearisation of the fibrewise map $\nu$}
We restrict our attention to a single fibre, so we consider a manifold $(M, \omega)$, where $J_0$ is a cscK complex structure and $v\in \widetilde{H}^1$ is a deformation of $J_0$. We wish to linearise the map $\nu$ defined in \ref{Def:map_nu}.

Let $\varphi_E \in \mrm{ker}_{\bb{R}}(\m{D}_0^*\m{D}_0)$. Then $\frac{1}{2}\nabla^{g}\varphi_E$ is a real holomorphic vector field, where $g$ is the Riemannian metric induced by $\omega$ and $J_0$.
Let $\rho(t)$ be the flow of the vector field
\begin{equation*}
Y_{\varphi_E} := \nabla^{g}\varphi_E = - J_0\mrm{grad}^{\omega}\varphi_E.
\end{equation*}
We wish to study how $\nu(v)$ changes when changing $\omega$ to $\rho(t)^*\omega$, so we must compute
\begin{equation}\label{Eq:der_nu_t_v_t}
\del_t|_{t=0} \nu_t(v_t) = \del_t|_{t=0}\nu_t(v) + \dd_v\nu(\del_t|_{t=0} v_t).
\end{equation}
In this expression, $\nu_t$ is the moment map for the action of $K^{\bb{C}}$ defined in \ref{Def:map_nu} computed with respect to the K\"ahler form $\rho(t)^*\omega$, and $v_t = \rho(t)^*v$.

Remark that $\rho(t)$ is a 1-parameter group of diffeomorphisms in $K^{\bb{C}}$ because it is the flow of a holomorphic vector field that admits a holomorphy potential.

As in \S \ref{Subsec:Sz_def_theory}, let $\Phi : V \to \scr{J}$ be the Kuranishi map \eqref{Eq:Kuranishi_map_deformed} which maps $0$ to $J_0$, and $\widetilde{H}^1$ the deformation space, which we identify with the tangent space $T_0V$. The map $\Phi$ is $K$-equivariant, hence locally $K^{\bb{C}}$-equivariant. In particular
\begin{equation*}
\Phi\left(\rho(t)^*x\right) = \rho(t)^*\Phi(x) \qquad \text{for} \ x\in V.
\end{equation*}
Hence the pair $(\omega, \rho(t)^*x)$ corresponds via $\Phi$ to a compatible pair $(\omega, \rho(t)^*\Phi(x))$ and this also holds for our $v \in \widetilde{H}^1$, which is itself an element of $V$.
We have that
\begin{equation}\label{Eq:lie_der_v}
\del_t|_{t=0} \rho(t)^*v = \left. \left(\m{L}_{Y_{\varphi_E}} \mbf{v}\right)\right|_{0},
\end{equation}
where $\mbf{v}$ is a vector field on $V$ such that $\mbf{v}|_{0} = v$.  By abuse of notation, we will often denote this derivative by $\m{L}_{Y_{\varphi_E}}v$.

\begin{lemma}
For $v \in \widetilde{H}^1$, $\nu_t(v) = \nu(v_t)$.
\end{lemma}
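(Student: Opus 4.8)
The plan is to unwind the two definitions and check they agree, using $K^{\bb C}$-equivariance of the Kuranishi map. Recall that $\nu_t$ is, by Definition \ref{Def:map_nu}, the moment map for the linear $K$-action on $\widetilde H^1$ associated to the K\"ahler form $\rho(t)^*\omega$, i.e. $\langle \nu_t(w), f\rangle = \tfrac12 \Omega_0^{(t)}(A_f^{(t)} w, w)$, where $\Omega_0^{(t)}$ and $A_f^{(t)}$ are built out of the Kuranishi data for the pair $(\rho(t)^*\omega, J_0)$; whereas $\nu(v_t)$ is the same expression for the original K\"ahler form $\omega$ evaluated at the rotated vector $v_t = \rho(t)^*v$. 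So the statement is really that pulling back the symplectic form by $\rho(t)$ and pulling back the point by $\rho(t)$ produce the same element of $\f k$, which is the infinitesimal/finite manifestation of the fact that $\rho(t)\in K^{\bb C}$ acts by symplectomorphisms on the Kuranishi slice.

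First I would use Proposition \ref{Prop:parallel_cpx-symp_structure} (or directly the fact that $\rho(t)$ is the flow of a holomorphic vector field with holomorphy potential, hence an element of $K^{\bb C} = \mrm{Aut}(M,L)$) to identify the cscK problem for $(\rho(t)^*\omega, J_0)$ with that for $(\omega, \rho(t)^{-1*}J_0) \cong (\omega, J_0)$ up to the action of $\rho(t)$. Concretely, $\rho(t)\colon (M,\rho(t)^*\omega, J_0)\to (M,\omega,J_0)$ is a biholomorphism of K\"ahler manifolds, so it intertwines the two Kuranishi constructions: the slice $V^{(t)}\subset \widetilde H^1(\rho(t)^*\omega)$ is carried to $V\subset \widetilde H^1(\omega)$, the Kuranishi map $\Phi^{(t)}$ to $\Phi$, and the linearised endomorphisms $A_f^{(t)}$ to $A_f$, compatibly with the identification $\widetilde H^1(\rho(t)^*\omega)\cong \widetilde H^1(\omega)$ induced by $\rho(t)$. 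Because the Weil--Petersson/$L^2$ pairing and the symplectic form $\bm\Omega$ on $\scr J$ are natural under biholomorphisms preserving the polarisation, this identification sends $\Omega_0^{(t)}$ to $\Omega_0$. Under this dictionary the vector $w\in\widetilde H^1(\rho(t)^*\omega)$ corresponds to $\rho(t)_*w\in\widetilde H^1(\omega)$; tracing through Definition \ref{Def:map_nu} gives $\nu_t(w) = \nu(\rho(t)_*w)$ as elements of $\f k$ (the identification of $\f k_b$ with $\f k_b$ being canonical, $\f k$ depends only on $J_0$, not on the K\"ahler form in its class, by Hallam's Lemma 2.7 cited above). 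Taking $w$ to be the representative of $v$ in the $\rho(t)^*\omega$-slice, which is precisely $v_t = \rho(t)^*v$ once one is careful about pushforward versus pullback conventions, yields $\nu_t(v) = \nu(v_t)$.

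An alternative, perhaps cleaner, route: invoke Proposition \ref{Prop:expansion_mu_nu}, which says $\nu$ is the second-order term of the scalar curvature moment map $\mu$ along a path through $0$. Since $\mu_t(x) = p_{\f k}\,\mrm{Scal}(\rho(t)^*\omega, \Phi^{(t)}(x))$ and $\rho(t)$ is an isometry between $(\rho(t)^*\omega, J_0)$ and $(\omega, \rho(t)^{-1*}\Phi^{(t)}(x))$... more simply, $\mrm{Scal}(\rho(t)^*\omega, \Psi) = \rho(t)^*\mrm{Scal}(\omega, \rho(t)_*\Psi)$ for any compatible $\Psi$, and $p_{\f k}$ commutes with pulling back by the isometry $\rho(t)$; hence $\mu_t(x) = \mu(\rho(t)\cdot x)$. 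Differentiating twice along a path $v_\tau$ with $\dot v_0 = v$ gives $\tfrac12\nu_t(v) = \tfrac12\nu(\rho(t)^*v) + O(\text{higher order in }\tau)$, and comparing coefficients gives the claim; but this requires care with the two independent parameters $t$ and $\tau$, so I would likely present the first argument as the main proof.

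The main obstacle I anticipate is purely bookkeeping: keeping straight the three objects being pulled back (the symplectic form $\omega$, the point $v$ in the slice, and the Kuranishi map $\Phi$), and checking that the naturality of $\bm\Omega$ and of the $L^2$-pairing under the biholomorphism $\rho(t)$ is genuinely available in $\scr J$ — i.e. that $\rho(t)^*$ acts as a symplectomorphism of $(\scr J, \bm\Omega)$ intertwining the $\scr G$-actions for the two symplectic forms. This is standard (it is the reason the whole moment-map picture is diffeomorphism-equivariant), so once set up correctly the computation is a one-line identification; the lemma is essentially a compatibility/naturality statement rather than a substantive new input, which is presumably why the authors state it as a lemma to be used in computing \eqref{Eq:der_nu_t_v_t}.
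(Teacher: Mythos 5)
Your proof is correct, and interestingly the route you present as the ``alternative, perhaps cleaner'' one is precisely the paper's proof, while the argument you offer as the main one is a different (but also valid) approach. The paper proceeds exactly as in your second sketch: it introduces $\mu_t(x) = S(\omega, \rho(t)^*\Phi(x))$, uses the (local) $K^{\bb{C}}$-equivariance of the Kuranishi map $\Phi$ to write $\mu_t(x) = \mu(\rho(t)^*x) = \rho(t)^*\mu(x)$, and then extracts $\nu_t(v) = \nu(\rho(t)^*v)$ by matching the quadratic coefficients in the expansion of $\mu_t(sv)$ and $\mu(sv)$ furnished by Proposition~\ref{Prop:expansion_mu_nu}. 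Your worry that this route ``requires care with the two independent parameters $t$ and $\tau$'' is misplaced: in the lemma $t$ is fixed, one expands only in the auxiliary parameter $\tau$ (the paper's $s$) along the ray $sv$, and the residual ``$O(\text{higher order in }\tau)$'' in your write-up should disappear once the $s^2$-coefficients are isolated. Your preferred first argument --- carrying the entire Kuranishi package (slice, deformed Kuranishi map $\Phi^{(t)}$, linearised $K$-action $A_f^{(t)}$, $L^2$-pairing and $\bm\Omega$) across the biholomorphism $\rho(t)\colon (M,\rho(t)^*\omega,J_0)\to (M,\omega,J_0)$ and reading off the equality of moment maps --- is conceptually cleaner and avoids the Taylor-expansion step, but it requires verifying naturality for several objects where the paper only needs it for the scalar curvature; and, as you flag yourself, the final pushforward-versus-pullback bookkeeping is exactly where one has to be careful, and resolving it in detail essentially reproduces the paper's chain of equalities. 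Both approaches hinge on the same substantive input, the $K^{\bb{C}}$-equivariance of $\Phi$ together with the naturality of scalar curvature under biholomorphism, and you have identified that correctly.
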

\begin{proof}
Again, this follows from equivariancy. Consider again the moment map $\mu(x) = S(\omega, \Phi(x))$ and denote by $\mu_t$ the map
\begin{equation*}
\begin{aligned}
\mu_t : V &\to \f{k} \\
x &\mapsto S(\omega, \rho(t)^*\Phi(x)).
\end{aligned}
\end{equation*}
Because $\Phi$ and $\mu_t$ are (locally) $K^{\bb{C}}$-invariant, and in light of the above computation, we obtain
\begin{equation*}
S(\omega, \rho(t)^*\Phi(x)) = S(\omega, \Phi(\rho(t)^*x)) = \mu(\rho(t)^*x) = \rho(t)^*\mu(x).
\end{equation*}
Now let $v \in T_0V = \widetilde{H}^1$. Then
\begin{equation*}
\mu_t(sv) = \rho(t)^*\mu(sv) = \rho(t)^*\left[ \frac{s^2}{2} \left.\frac{\dd}{\dd s}\right|_{s=0} \nu(v) + O(s^{3})\right].
\end{equation*}
But also
\begin{equation*}
\mu_t(sv) = \frac{s^2}{2} \left.\frac{\dd}{\dd s}\right|_{s=0} \nu_t(v) + O(s^{3}).
\end{equation*}
Thus $\nu_t(v) = \rho(t)^*\nu(v) = \nu(\rho(t)^*v)$, as claimed.
\end{proof}

Using this lemma and equation \eqref{Eq:lie_der_v}, the derivative \eqref{Eq:der_nu_t_v_t} becomes
\begin{equation*}
\del_t|_{t=0} \nu_t(v_t) = 2\ \dd_v\nu\left(\m{L}_{Y_{\varphi_E}}v\right).
\end{equation*}
Thus using the definition of moment map we can compute the linearisation of the map $\nu$ as follows. Letting $\psi \in \f{k}$,
\begin{equation*}
\dd_v \langle \nu, \psi \rangle (\m{L}_{Y_{\varphi_E}}v) = \Omega_0 \left(\m{L}_{Y_{\varphi_E}}v, \m{L}_{X_\psi} v\right),
\end{equation*}
where $X_{\psi} = \mrm{grad}^{\omega}\psi$. Recall the linearised infinitesimal action induced by $\psi \in \f{k}$ defined in \eqref{Eq:linearised_infinitesimal_action}, and denoted $A_\psi$. We showed in \eqref{Eq:property_of_A_and_Lie_der} that
\begin{equation*}
A_\psi v = -\left(\m{L}_{X_\psi} \mbf{v}\right)|_{0}.
\end{equation*}
Thus using the definition of $\Omega_0$,
\begin{equation}\label{Eq:linearisation_nu}
\begin{aligned}
\dd_v \langle \nu, \psi \rangle \left(\m{L}_{Y_{\varphi_E}}v\right) &= \int_M \langle J_0 \dd_0 \Phi \left(\m{L}_{Y_{\varphi_E}}v\right), \dd_0 \Phi\left(\m{L}_{X_\psi} v\right)\rangle_{\omega} \ \omega^m \\
&= \int_M \langle \dd_0 \Phi \left(\m{L}_{X_{\varphi_E}}v\right), \dd_0 \Phi\left(\m{L}_{X_\psi} v\right)\rangle_{\omega} \ \omega^m \\
&= \int_M \langle \dd_0 \Phi \left(A_{\varphi_E}v\right), \dd_0 \Phi\left(A_\psi v\right)\rangle_{\omega} \ \omega^m,
\end{aligned}
\end{equation}
where $\langle \cdot, \cdot \rangle_{\omega}$ is the inner product induced by the Riemannian metric $g(\omega, J_0)$.

\subsection{Linearisation of the optimal symplectic connection equation}\label{Subsec:linearisation_OSC}
Let us now return to the fibration setting.
Letting $\varphi, \psi \in C^\infty(E)$, by applying \eqref{Eq:linearisation_nu} and the fact that the map $\nu_\pi$ defined in \ref{Def:mu_pi_nu_pi} is defined fibrewise, we obtain
\begin{equation}\label{Eq:linearised_relative_map_nu}
\langle \dd_v \nu (\m{L}_{Y_{\varphi}}v), \psi \rangle = \int_X \langle \dd_0 \Phi \left(A_{\varphi}v\right), \dd_0 \Phi\left(A_\psi v\right) \rangle_{\omega_F} \omega_F^m \wedge \omega_B^n.
\end{equation}
Here, the map $A_\psi$ acts vertically, because it is induced by the infinitesimal action of the group of holomorphic isometries of every fibre. By using equation \eqref{Eq:linearised_relative_map_nu}, we obtain the following result.

\begin{lemma}\label{Lemma:linearisation}
Let $\widehat{\m{L}}$ be the linearisation of the equation \eqref{Eq:genOSC} at a solution, composed with the projection $p_E$. Then
\begin{equation*}
\langle\widehat{\m{L}}(\varphi), \psi \rangle = \int_X \langle \m{R}\varphi, \m{R}\psi \rangle_{\omega_F + \omega_B} \omega_F^m\wedge \omega_B^n + \lambda \int_X \langle \dd_0 \Phi \left(A_{\varphi}v\right), \dd_0 \Phi\left(A_\psi v\right) \rangle_{\omega_F} \omega_F^m \wedge \omega_B^n,
\end{equation*}
where $\m{R}$ is the operator \eqref{Eq:operator_R}, which gives the linearisation of the optimal symplectic connection equation at a solution.
\end{lemma}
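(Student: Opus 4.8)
The plan is to linearise the left-hand side of \eqref{Eq:genOSC} as the sum of its two natural pieces: the \emph{curvature} piece $p_E(\Theta(\omega_X,J_0))$, which depends only on $\omega_X$ and $J_0$, and the \emph{deformation} piece $\tfrac{\lambda}{2}\nu$, which records the first-order deformation $v$ of the family. Varying $\omega_X\mapsto\omega_X+i\del\delbar\varphi$ with $\varphi\in C^\infty(E,J_0)$ and composing with $p_E$, I would write $\widehat{\m{L}}=\widehat{\m{L}}_\Theta+\tfrac{\lambda}{2}\widehat{\m{L}}_\nu$ accordingly, and then identify the two summands separately.

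For $\widehat{\m{L}}_\Theta$ I would invoke the Dervan--Sektnan computation recalled in \S\ref{Subsec:OSC_classic}: the linearisation of $\omega_X\mapsto p_E(\Theta(\omega_X,J_0))$ is the operator $\m{R}^*\m{R}$, which by its description as $p_E\circ\m{L}_1$ restricted to $C^\infty(E)$ --- with $\m{L}_1$ the $k^{-1}$-coefficient of the Lichnerowicz operator $\m{D}_k^*\m{D}_k$ --- satisfies $\langle\m{R}^*\m{R}\varphi,\psi\rangle=\int_X\langle\m{R}\varphi,\m{R}\psi\rangle_{\omega_F+\omega_B}\,\omega_F^m\wedge\omega_B^n$ for $\varphi,\psi\in C^\infty(E)$. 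The subtle point is that this must hold at a solution of the \emph{full} equation \eqref{Eq:genOSC}, where $p_E(\Theta)=-\tfrac{\lambda}{2}\nu$ need not vanish. I would justify this by recalling that the relevant quantity is the $C^\infty(E)$-component of the $k^{-1}$-coefficient of $\mrm{Scal}(\omega_k,J_0)$, whose linearisation is controlled by $\m{D}_k^*\m{D}_k=\m{L}_0+k^{-1}\m{L}_1+O(k^{-2})$ with $\m{L}_0=\m{D}_{\m{V}}^*\m{D}_{\m{V}}$ annihilating $C^\infty(E)$; since moreover $\widehat{S}_b$ is constant in $b$, the extra first-order term $\tfrac12\langle\nabla\mrm{Scal},\nabla\varphi\rangle$ contributes nothing to the $C^\infty(E)$-part at order $k^{-1}$, so only $p_E\circ\m{L}_1=\m{R}^*\m{R}$ survives.

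For $\tfrac{\lambda}{2}\widehat{\m{L}}_\nu$ I would use the fibrewise computation of the subsection ``Linearisation of the fibrewise map $\nu$''. Varying $\omega_X$ by $i\del\delbar\varphi$ with $\varphi\in C^\infty(E,J_0)$ corresponds fibrewise, by Proposition \ref{Prop:parallel_cpx-symp_structure}, to pulling back by the flow $\rho(t)$ of $Y_\varphi=\nabla^g\varphi$ --- which lies in $K^{\bb{C}}$ because $\varphi$ is a fibrewise holomorphy potential --- under which the deformation transforms as $v\mapsto v_t=\rho(t)^*v$. The identity $\del_t|_{t=0}\nu_t(v_t)=2\,\dd_v\nu(\m{L}_{Y_\varphi}v)$ established there, paired with $\psi\in C^\infty(E)$ and combined with the fibrewise formula \eqref{Eq:linearised_relative_map_nu} (which applies since $\nu_\pi$ of Definition \ref{Def:mu_pi_nu_pi} is defined fibrewise), gives $\langle\tfrac{\lambda}{2}\widehat{\m{L}}_\nu(\varphi),\psi\rangle=\lambda\int_X\langle\dd_0\Phi(A_\varphi v),\dd_0\Phi(A_\psi v)\rangle_{\omega_F}\,\omega_F^m\wedge\omega_B^n$. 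Adding the two contributions yields the claimed formula.

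The hard part will not be a single estimate but the bookkeeping in the deformation piece: translating a K\"ahler-potential variation $i\del\delbar\varphi$ of $\omega_X$ (with $\varphi\in C^\infty(E)$) into the flow $\rho(t)$ of the subsection ``Linearisation of the fibrewise map $\nu$'' via Proposition \ref{Prop:parallel_cpx-symp_structure}, keeping exact track of the numerical constants --- in particular that the factor $2$ in $\del_t|_{t=0}\nu_t(v_t)=2\,\dd_v\nu(\m{L}_{Y_\varphi}v)$ must combine with the $\tfrac{\lambda}{2}$ to yield precisely the $\lambda$ of the statement --- and verifying that $\dd_v\nu(\m{L}_{Y_\varphi}v)$ really is the full linearisation of $\omega_X\mapsto\nu$ along $i\del\delbar\varphi$ and not merely a partial derivative.
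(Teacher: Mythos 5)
Your proposal matches the paper's approach: the paper derives the lemma directly by combining the fibrewise computation \eqref{Eq:linearised_relative_map_nu} for the $\nu$-piece (with the factor $2$ from $\del_t|_{t=0}\nu_t(v_t)=2\,\dd_v\nu(\m{L}_{Y_\varphi}v)$ cancelling against the $\tfrac{\lambda}{2}$) with the Dervan--Sektnan identification of $\m{R}^*\m{R}$ as the linearisation of the curvature piece, exactly as you lay out. Your extra justification for why $\m{R}^*\m{R}$ persists at a solution of the \emph{full} equation \eqref{Eq:genOSC} --- where $p_E(\Theta)=-\tfrac{\lambda}{2}\nu$ need not vanish, so that the first-order gradient correction must be checked to drop out of the $C^\infty(E)$-component at order $k^{-1}$ --- is a genuine clarification that the paper leaves implicit.
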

From this expression it follows that $\widehat{\m{L}}$ is self adjoint.

We now study the kernel of $\widehat{\m{L}}$. Since $v$ is fixed in our setting, we can define the maps
\begin{equation}\label{Eq:relative_linearised_infinitesimal_action}
\begin{aligned}
A : C^\infty(E) &\to \widetilde{H}^1_{\m{V}} &\quad \text{and}\qquad \m{A} : C^\infty(E) &\to T_{J_0}^{0,1}\scr{J}_\pi\\
\psi &\mapsto A_\psi v &\quad \psi &\mapsto \dd_0\Phi\left(A_\psi v\right).
\end{aligned}
\end{equation}
\begin{lemma}\label{Lemma:ker_A}
A function $\psi \in C^\infty(E)$ is in the kernel of $A$ if and only if $\psi$ is a fibrewise holomorphy potential with respect to all $J_s$, i.e. $\psi \in C^\infty(E, J_s)$.
\end{lemma}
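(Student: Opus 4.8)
The plan is to reduce to a single fibre and to rewrite $\m{A}\psi=\dd_0\Phi(A_\psi v)$ as a Lie derivative, after which both implications follow quickly. Since $A$ and the property of being a fibrewise holomorphy potential are defined fibre by fibre, I would fix $b$, drop it from the notation, write $J_s$ for $J_s|_{X_b}$, let $x_s\in V$ be the Kuranishi point with $\Psi(x_s)=J_s$ and set $v=\del_s|_{s=0}x_s$, and use the corrected Kuranishi map $\Phi$ of Proposition \ref{Prop:Kuranishi_map_cscK}. Combining the $K$-equivariance of $\Phi$, the formula \eqref{Eq:property_of_A_and_Lie_der} expressing $A_\psi v$ through the bracket $[\mbf{v},\xi_\psi]_0$, and the fact that the generator of the pull-back action of $\exp(t\psi)$ on $\scr{J}(X_b)$ at $J$ is $\m{L}_{\xi_\psi}J$ (with $\xi_\psi=\mrm{grad}^{\omega_b}\psi$ the Hamiltonian vector field of $\psi$), I expect to obtain
\begin{equation*}
\m{A}\psi=\dd_0\Phi(A_\psi v)=-\m{L}_{\xi_\psi}\bigl(\dd_0\Phi(v)\bigr)=-\del_s|_{s=0}\bigl(\m{L}_{\xi_\psi}J_s\bigr),
\end{equation*}
using $\dd_0\Phi(v)=\del_s|_{s=0}J_s$ and that $\xi_\psi$ does not depend on $s$. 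Since $\dd_0\Phi$ is injective (up to the perturbation of Proposition \ref{Prop:Kuranishi_map_cscK} it is the inclusion $\widetilde{H}^1_{\m{V}}|_{X_b}\hookrightarrow T_{J_0}^{0,1}\scr{J}(X_b)$), this already identifies $\ker A$ with $\{\psi\in C^\infty(E):\del_s|_{s=0}(\m{L}_{\xi_\psi}J_s)=0\}$; recall $\m{L}_{\xi_\psi}J_0=0$, as $\psi$ is a $J_0$-holomorphy potential.

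Granting this identity, the reverse implication is immediate: if $\psi$ is a holomorphy potential for every $J_s$, then by the Kähler identity identifying holomorphy potentials with holomorphic Killing fields the Hamiltonian flow of $\xi_\psi$ preserves each $J_s$, i.e. $\m{L}_{\xi_\psi}J_s\equiv 0$; differentiating at $s=0$ gives $\psi\in\ker A$.

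For the forward implication I would argue as follows. By Bochner's linearisation theorem the action of the compact group $K=\mrm{Isom}(X_b,\omega_b)$ can be linearised near its fixed point $0\in V$; in such holomorphic coordinates $\exp(t\psi)$ acts as $\exp(tA_\psi)$ with $A_\psi$ skew-Hermitian (property (i) after \eqref{Eq:linearised_infinitesimal_action}), so its fixed locus in $\widetilde{H}^1_{\m{V}}|_{X_b}$ is the \emph{linear} subspace $\ker A_\psi$. Using the $K$-equivariance and local injectivity of the Kuranishi map together with the Kähler identity, a point $x\in V$ is fixed by $\exp(t\psi)$ exactly when $\psi$ is a holomorphy potential for $\Psi(x)$; hence ``$\psi$ is a fibrewise holomorphy potential with respect to all $J_s$'' is equivalent to ``$x_s\in\ker A_\psi$ for every $s$'', and it remains to deduce this from the hypothesis $A_\psi v=0$, i.e. from $v=\del_s|_{s=0}x_s\in\ker A_\psi$. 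At this point I would invoke the $\bb{C}^*$-equivariance of the degeneration $\m{X}\to B\times S$ (\S\ref{Subsec:family_submersions}): it forces the curve $s\mapsto x_s$ to be, after reparametrising $s$, a $\bb{C}^*$-orbit closure through $0$ in the linearised slice, hence contained in the $\bb{C}^*$-invariant linear span of the weight components of $x_s$, of which $v$ is the lowest-weight one; since $\ker A_\psi$ is $\bb{C}^*$-invariant, $v\in\ker A_\psi$ then drags the whole curve into $\ker A_\psi$ (in the cleanest case, where the family over $X_b$ is the restriction of the Kuranishi family to a $\bb{C}^*$-invariant line $\{sv\}$, this is just linearity of $\ker A_\psi$).

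The fiddly-but-routine part of the argument is the Lie-derivative identity of the first paragraph, which requires carefully tracking the ($K^{\bb{C}}$-)equivariance through the relative Kuranishi construction of Theorem \ref{Thm:relative_Kuranishi}. The genuine obstacle is the last step: bootstrapping the first-order condition $A_\psi v=0$ to holomorphy with respect to \emph{every} $J_s$ is exactly where one must use that the fibres degenerate via the equivariant $\bb{C}^*$-action, so that the curve traced in the Kuranishi slice is homogeneous and completely controlled by its velocity $v$; without this hypothesis the statement would only hold infinitesimally.
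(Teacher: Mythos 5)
Your proposal follows the same backbone as the paper's proof: both reduce $A_\psi v$ to a Lie derivative of the complex structure $J_s$ along the Hamiltonian vector field $\xi_\psi$, giving $A_\psi v = \pm\,\del_s|_{s=0}\bigl(\m{L}_{\xi_\psi} J_{x_s}\bigr)$ (up to a sign that has no bearing on the kernel), identify $\m{L}_{\xi_\psi}J_s = 0$ with $\psi$ being a $J_s$-holomorphy potential via the Kähler identity, and read this in both directions. The paper's own proof is the chain of equalities
\begin{equation*}
0 = A_\psi(v) = \left.\frac{\dd}{\dd t}\right|_{t=0}\left.\frac{\dd}{\dd s}\right|_{s=0} \mrm{exp}(t\psi)\cdot x_s = \left.\frac{\dd}{\dd s}\right|_{s=0}\m{L}_{\xi_\psi}J_{x_s},
\end{equation*}
after which it asserts that ``$\m{L}_{\xi_\psi}J_{x_s}$ is fibrewise constant,'' and hence vanishes for all $s$ because it vanishes at $s=0$.

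The genuine difference is that you have located and flagged the delicate step, whereas the paper treats it as immediate. Vanishing of the first $s$-derivative at $s=0$ does \emph{not} by itself force $\m{L}_{\xi_\psi}J_{x_s}$ to be constant in $s$; some homogeneity of the curve $s\mapsto x_s$ inside the Kuranishi slice must be invoked. Your proposed fix, via the $\bb{C}^*$-equivariance of the degeneration $\m{X}\to B\times S$, is the right mechanism: the $\bb{C}^*$-action on $S$ makes $s\mapsto x_s$ a $\bb{C}^*$-orbit closure through the origin in $V$, and in the cleanest (and, for this lemma, effectively the assumed) model $x_s = sv$, so linearity of $A_\psi$ closes the loop. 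Where your sketch is still loose is the assertion that $\ker A_\psi$ is $\bb{C}^*$-invariant: that holds only when $\psi$ commutes with the generator of the $\bb{C}^*$-action inside $\f{k}^{\bb{C}}$, which is not automatic and deserves an explicit reduction to the pure-weight-one case $v\in V_1$ (or an argument that the OSC set-up only ever sees the linearised curve $sv$, since $\nu$ is defined from the first-order deformation alone). In short: your proof is essentially the paper's, but your second and third paragraphs make visible a step that the paper passes over silently, and your closing step should be tightened by pinning down either $x_s = sv$ or the $\bb{C}^*$-invariance of $\ker A_\psi$ rather than asserting them.
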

\begin{proof}
Let $\psi \in \mrm{ker}A$ and take $x_s \in V$ is such that $x_0 = 0$ and $\dot{x}_0 = v$. Then
\begin{equation*}
\begin{aligned}
0 = A_\psi(v) &= \left.\frac{\dd}{\dd t}\right|_{t=0} \dd_0 (x \mapsto \mrm{exp}(t\psi) \cdot x)(v)\\
&= \left.\frac{\dd}{\dd t}\right|_{t=0}\left.\frac{\dd}{\dd s}\right|_{s=0} \mrm{exp}(t\psi) \cdot x_s \\
&= \left.\frac{\dd}{\dd s}\right|_{s=0}\left.\frac{\dd}{\dd t}\right|_{t=0} \left(\rho^{\xi_\psi}(t)\right)^*x_s \\
&= \left.\frac{\dd}{\dd s}\right|_{s=0}\m{L}_{\xi_\psi} J_{x_s}.
\end{aligned}
\end{equation*}
where by $\rho^{\xi_\psi}(t)$ we denote the flow of the vertical vector field $\xi_{\psi} = \mrm{grad}^{\omega_F}\psi$, and all the equalities hold fibrewise since the Hamiltonian action we consider is a fibrewise action. So $\m{L}_{\xi_\psi} J_{x_s}$ is fibrewise constant, i.e.
\begin{equation*}
\left(\m{L}_{\xi_\psi} J_{x_s}\right)_{\m{V}} = \left(\m{L}_{\xi_\psi} J_{0}\right)_{\m{V}} = 0
\end{equation*}
for all $s$. This can be rephrased as
\begin{equation*}
\delbar_{s,\m{V}} \xi_\psi = 0
\end{equation*}
for all $s$, where $\delbar_{s,\m{V}}$ is the vertical $\delbar$-operator computed with respect to $J_s$. Notice that $X_\psi$ is a real vector field which corresponds (under the isomorphism between real vector fields and $(1,0)_s$ vector fields) to $J_s \nabla_{s, \m{V}}^{1,0}\psi$, where $\nabla_{s, \m{V}}\psi$ denotes the vertical vector field which on each fibre is the Riemannian gradient with respect to the fibrewise metric induced by $(\omega_F, J_s)$. Since $J_s$ is integrable,
\begin{equation*}
\left(\m{L}_{X_\psi} J_s\right)_{\m{V}} = \left(\m{L}_{J_s \nabla_{s, \m{V}}^{1,0}\psi} J_s\right)_{\m{V}} = \left(J_s \m{L}_{\nabla_{s, \m{V}}^{1,0}\psi} J_s\right)_{\m{V}} = 0,
\end{equation*}
so $\psi$ is a fibrewise holomorphic potential for $J_s$.
\end{proof}

\begin{proposition}\label{Prop:kernel_linearisation}
The kernel of $\widehat{\m{L}}$ is given by
\begin{equation*}
\mrm{ker}\widehat{\m{L}} = \set*{\psi \in C^\infty(E, J_0) | \delbar_s (\nabla_{s, \m{V}}^{1,0}\psi )= 0 \ \forall s},
\end{equation*}
i.e. the functions in the kernel are those fibrewise $J_0$-holomorphy potentials which are global holomorphy potentials with respect to all $J_s$.
\end{proposition}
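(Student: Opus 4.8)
\textit{Proof proposal.} The backbone is the formula of Lemma~\ref{Lemma:linearisation}: at a solution of \eqref{Eq:genOSC} one has, for all $\psi,\chi\in C^\infty(E)$,
\begin{equation*}
\langle\widehat{\m{L}}\psi,\chi\rangle = \int_X\langle\m{R}\psi,\m{R}\chi\rangle_{\omega_F+\omega_B}\,\omega_F^m\wedge\omega_B^n + \lambda\int_X\langle\m{A}\psi,\m{A}\chi\rangle_{\omega_F}\,\omega_F^m\wedge\omega_B^n,
\end{equation*}
with $\lambda>0$ and $\m{A}\psi = \dd_0\Phi(A_\psi v)$ as in \eqref{Eq:relative_linearised_infinitesimal_action}; thus $\widehat{\m{L}} = \m{R}^*\m{R} + \lambda\,\m{A}^*\m{A}$ is a sum of non-negative operators. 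Pairing $\widehat{\m{L}}\psi = 0$ with $\psi$ forces $\m{R}\psi = 0$ and $\m{A}\psi = 0$, and conversely these give $\m{R}^*\m{R}\psi = \m{A}^*\m{A}\psi = 0$. Hence $\ker\widehat{\m{L}} = \ker\m{R}\cap\ker\m{A}$, and the plan is to describe each factor and intersect.

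The factor $\ker\m{A}$: by \eqref{Eq:relative_linearised_infinitesimal_action}, $\m{A} = \dd_0\Phi\circ A$, and $\dd_0\Phi$ is injective, being the differential of the relative Kuranishi embedding of Theorem~\ref{Thm:relative_Kuranishi} (as perturbed in Proposition~\ref{Prop:Kuranishi_map_cscK}); therefore $\ker\m{A} = \ker A$, and Lemma~\ref{Lemma:ker_A} identifies $\ker A$ with the set of $\psi\in C^\infty(E,J_0)$ that are fibrewise holomorphy potentials for every $J_s$, i.e. with $\{\psi : \delbar_{s,\m{V}}\nabla^{1,0}_{s,\m{V}}\psi = 0$ for all $s\}$. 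The factor $\ker\m{R}$: as recalled in \S\ref{Subsec:OSC_classic}, $\ker\m{R} = \ker\m{R}^*\m{R}$ is the set of fibrewise $J_0$-holomorphy potentials which are global holomorphy potentials on $X$ with respect to $\omega_k$; equivalently, for $\psi\in C^\infty(E,J_0)$ the vertical component of $\delbar_0\nabla^{1,0}_{0,\m{V}}\psi$ vanishes automatically and $\ker\m{R}$ is cut out by the vanishing of its horizontal component, $\delbar_0\nabla^{1,0}_{0,\m{V}}\psi = 0$.

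It then remains to identify $\ker\m{R}\cap\ker\m{A}$ with the set in the statement. The inclusion $\supseteq$ is immediate, since $\delbar_s\nabla^{1,0}_{s,\m{V}}\psi = 0$ for all $s$ yields $\psi\in\ker\m{R}$ (take $s=0$) and $\psi\in\ker A = \ker\m{A}$ (the vertical parts, for all $s$). For $\subseteq$ one must promote the information ``global $J_0$-holomorphy potential, and fibrewise $J_s$-holomorphy potential for every $s$'' to ``global $J_s$-holomorphy potential for every $s$,'' i.e. to $\delbar_s\nabla^{1,0}_{s,\m{V}}\psi = 0$ for every $s$. The vertical component of this identity holds for every $s$ by membership in $\ker\m{A}$; for the horizontal component one invokes the Remark following Lemma~\ref{Lemma:deformations_preserve_HV}: all $J_s\in\scr{J}_\pi$ agree with $J_0$ on $\m{H}^{\omega_X}$, so the horizontal parts of $\del$ and $\delbar$ do not depend on $s$. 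Concretely, one passes to the $s$-independent symplectic gradient $\Xi$ of $\psi$ (whose vertical part recovers $\mrm{grad}^{\omega_F}\psi$), so that being a global holomorphy potential for $J_s$ amounts to the vanishing of $\m{L}_\Xi J_s$, and one decomposes this endomorphism along $TX = \m{V}\oplus\m{H}^{\omega_X}$: the component valued in $\m{H}^{\omega_X}$ is $s$-independent by the Remark, hence vanishes because it does at $s=0$, while the component valued in $\m{V}$ must be forced to vanish by combining $\ker\m{A}$ with the $s=0$ identity. This yields $\m{L}_\Xi J_s = 0$ for all $s$, as required.

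I expect this last paragraph to be the real content. The positivity reduction $\ker\widehat{\m{L}} = \ker\m{R}\cap\ker\m{A}$ and the two kernel descriptions are short and rely only on results already in place (Lemma~\ref{Lemma:linearisation}, Lemma~\ref{Lemma:ker_A}, the description of $\ker\m{R}$ from \S\ref{Subsec:OSC_classic}, injectivity of $\dd_0\Phi$). The delicate point is the componentwise bookkeeping showing that the \emph{horizontal} obstruction to \emph{global} holomorphicity does not change with $s$ once all the fibrewise obstructions have been killed; this is precisely where the rigidity of the horizontal distribution within $\scr{J}_\pi$ (the Remark after Lemma~\ref{Lemma:deformations_preserve_HV}) is used.
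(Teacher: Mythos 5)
Your proof takes essentially the same route as the paper's: the reduction to $\ker\widehat{\m{L}}=\ker\m{R}\cap\ker A$ via positivity and injectivity of $\dd_0\Phi$, the $\ker A$ description via Lemma~\ref{Lemma:ker_A}, the $\ker\m{R}$ description from \S\ref{Subsec:OSC_classic}, and the $s$-independence of the horizontal $\delbar$-operator from the Remark after Lemma~\ref{Lemma:deformations_preserve_HV} are exactly the ingredients the paper assembles.

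One slip in your last paragraph: the decomposition that does the work is of the $\delbar$-operator into vertical and horizontal pieces, $\delbar_s = \delbar_{s,\m{V}}+\delbar_B$, not of the vector-field \emph{values} of the Lie derivative you write down along $TX=\m{V}\oplus\m{H}^{\omega_X}$ (those values lie entirely in $\m{V}$, since elements of $T_{J_0}\scr{J}_\pi$ take values in $\m{V}^{1,0}$); and your bookkeeping of which hypothesis kills which piece is reversed. The vertical piece $\delbar_{s,\m{V}}\nabla^{1,0}_{s,\m{V}}\psi$ vanishes for \emph{every} $s$ directly from $\psi\in\ker A$ by Lemma~\ref{Lemma:ker_A} — no appeal to the $s=0$ identity is needed — while it is the horizontal piece $\delbar_B\,\mrm{grad}^{\omega_F}\psi$ that uses both ingredients: it is $s$-independent by the Remark and vanishes at $s=0$ because $\psi\in\ker\m{R}$. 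Repairing this attribution gives exactly the paper's closing computation
\begin{equation*}
\delbar_s\,\mrm{grad}^{\omega_F}\psi=\delbar_{s,\m{V}}\,\mrm{grad}^{\omega_F}\psi+\delbar_B\,\mrm{grad}^{\omega_F}\psi=0 \quad \text{for all } s.
\end{equation*}
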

\begin{proof}
Since $\Phi$ is an embedding, $\dd_0\Phi$ is injective, so for $\psi \in C^\infty(E)$, $\psi \in \mrm{ker}\widehat{\m{L}}$ if and only if $\psi \in \mrm{ker}{\m{R}}$ and $\psi \in \mrm{ker}{A}$.

As seen in \S \ref{Subsec:OSC_classic}, the kernel of $\m{R}$ consists of fibrewise holomorphy potentials which are also global holomorphy potentials.
Thus $\psi \in C^\infty(E)$ lies in $\mrm{ker}\widehat{\m{L}}$ if and only if
\begin{equation*}
\delbar_B \nabla^{1,0}_{\m{V}} \psi = 0 \quad \text{and} \quad \delbar_{s, \m{V}} \nabla^{1,0}_{s, \m{V}} \psi = 0.
\end{equation*}
as shown in Lemma \ref{Lemma:ker_A}. From these two conditions, and in light of Lemma \ref{Lemma:deformations_preserve_HV}, which implies that $\delbar_B$ does not depend on $s$, we have
\begin{equation*}
\delbar_s \mrm{grad}^{\omega_F} \psi = \delbar_{s,\m{V}} \mrm{grad}^{\omega_F} \psi + \delbar_B \mrm{grad}^{\omega_F} \psi = 0,
\end{equation*}
as claimed.
\end{proof}
\begin{remark}\label{Rmk:Aut_intersection}
In \cite[\S 4.1]{DervanSektnan_OSC3} it is explained that the kernel of the operator $\m{R}$ is given by the Lie algebra of the group $\mrm{Aut}(\pi)$ of automorphisms of the projection, described in Definition \ref{Def:aut(pi)}. In our case, the kernel of the linearisation $\widehat{\m{L}}$ is the intersection
\begin{equation*}
\mrm{ker}\ \widehat{\m{L}} = \mrm{Lie}(\mrm{Aut}(\pi_s)) \cap \mrm{Lie}(\mrm{Aut}(\pi)),
\end{equation*}
where we denote by $\pi : X \to B$ the central fibration and we view $\{J_s\}$ as a family of complex structures on the same underlying smooth manifolds, compatible with the projection and with $\omega_X$ (see the end of Section \ref{Subsec:family_submersions}).
\end{remark}

We wish to see that $\widehat{\m{L}}$ is elliptic as a differential operator on the global sections of $E\to B$. Let us split $\m{A}$ in \eqref{Eq:relative_linearised_infinitesimal_action} as the composition of the two operators
\begin{equation*}
\begin{aligned}
A_1 : C^\infty(E) &\to \Upgamma(\m{V}) \\
\varphi &\mapsto \mrm{grad}^{\omega_F}\varphi
\end{aligned}
\end{equation*}
and
\begin{equation*}
\begin{aligned}
A_2 : \Upgamma(\m{V}) &\to T_{J_0}\scr{J}_\pi \\
Y &\mapsto -(\m{L}_{Y}v).
\end{aligned}
\end{equation*}
To give a local expression, we make use of \emph{Riemannian} coordinates, and we denote again the vertical coordinates with the letters $a, b, c, \dots$ and the horizontal coordinates with the letters $i, j, k, \dots$, as in Section \ref{Sec:deformations_of_fibrations}. We have:
\begin{equation*}
\begin{aligned}
&(A_2(Y))^a_{\ b} = -\left(\m{L}_{Y}v\right)^a_{\ b} = -Y^c\del_cv^a_{\ b} - v^a_{\ c} \del_b Y^c + v^c_{\ b}\del_c Y^a \\
&(A_2(Y))^a_{\ j} = (A_2(Y))^{a}_{\ c} (\omega_F)^{dc}(\omega_X)_{dj}
\end{aligned}
\end{equation*}
where the second expression follows from Lemma \ref{Lemma:deformation_coord_vertical_horizontal}.
Thus when $\varphi \in C^\infty(E)$ and $Y = \omega_F^{de}\del_e \varphi \del_d$,
\begin{equation*}
\begin{aligned}
&(\m{A}(\varphi))^a_{\ b} =- v^a_{\ c} \del_b (\omega_F^{cd}\del_d\varphi) + v^c_{\ b}\del_c (\omega_F^{ad}\del_d\varphi) + T_1(\varphi) \\
&(\m{A}(\varphi))^a_{\ j} = - v^a_{\ c} \del_b (\omega_F^{cd}\del_d\varphi)(\omega_F)^{eb}(\omega_X)_{ej} + v^c_{\ b}\del_c (\omega_F^{ad}\del_d\varphi) (\omega_F)^{eb}(\omega_X)_{ej}+ T_1'(\varphi),
\end{aligned}
\end{equation*}
where $T_1(\varphi)$ and $T_1'(\varphi)$ are terms involving first order vertical derivatives of $\varphi$. Thus we see that $\m{A}$ is a second order differential operator, and all the derivatives of $\varphi$ involved are vertical. The adjoint of $A_1$ is given by
\begin{equation*}
A_1^* (Y) = \mrm{div}(J_0Y).
\end{equation*}
Indeed, we can compute the divergence with respect to any K\"ahler metric $g$ whose K\"ahler form restricts vertically to $\omega_F$, and the result depends only on the vertical part:
\begin{equation*}
\begin{aligned}
\langle A_1\varphi, Y \rangle_{L^2} &= \int_X{g_{ac}}\ \omega_F^{ab} \ \del_b\varphi \ Y^c \ \dd Vol_g = \int_X -ig_{ac}g^{ab} \ (\nabla_b \varphi) \ Y^c \ \dd Vol_g \\
&= \int_X -i (\nabla_c\varphi) \ Y^c \ \dd Vol_g = \int_X \varphi \ \nabla_c(iY^c) \ \dd Vol_g = \langle \varphi, A_1^*Y \rangle_{L^2}.
\end{aligned}
\end{equation*}

To compute the adjoint of $A_2$ we make use of the following lemma.
\begin{lemma}
Let $Q \in \widetilde{H}^1_{\m{V}}$ and let $g_F$ be the vertical Riemannian metric induced by $(\omega_F, J_0)$. Then
\begin{equation*}
g_F(\m{L}_Yv, Q) = g_F(Q, \nabla v(Y)) + g_F(vQ-Qv, \nabla Y).
\end{equation*}
\end{lemma}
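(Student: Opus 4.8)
The plan is to rewrite the Lie derivative $\m{L}_Y v$ purely in terms of the fibrewise Levi--Civita connection $\nabla$ of $g_F = g(\omega_F, J_0)$, so that the Christoffel symbols cancel, and then to move the auxiliary endomorphism $\nabla Y$ across the $g_F$-pairing using that $v$ and $Q$ are $g_F$-self-adjoint. Since $Y$, $v$ and $Q$ are all vertical --- and by Lemma \ref{Lemma:deformation_coord_vertical_horizontal} only the vertical block $v^a_{\ b}$ of $v$ pairs with $Q$ --- the entire computation takes place fibrewise on a fixed fibre $X_b$ with its metric $(\omega_b, J_0)$; in particular $\nabla$ may be taken to be the fibrewise Levi--Civita connection and $\nabla Y$ denotes the vertical endomorphism $Z \mapsto \nabla_Z Y$.

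First I would establish the identity
\begin{equation*}
\m{L}_Y v = \nabla_Y v + v\circ(\nabla Y) - (\nabla Y)\circ v .
\end{equation*}
This follows by evaluating on a vector field $Z$: one has $(\m{L}_Y v)(Z) = [Y, v(Z)] - v([Y, Z])$, and applying the torsion-free identity $[A,B] = \nabla_A B - \nabla_B A$ twice gives $(\m{L}_Y v)(Z) = (\nabla_Y v)(Z) - \nabla_{v(Z)}Y + v(\nabla_Z Y)$, which is precisely the right-hand side above. (Equivalently, one checks the cancellation of the $\Gamma$-terms directly in the coordinate expression for $A_2(Y)$ recorded just before the statement.) Pairing with $Q$ and using that $g_F$ is symmetric, the first summand contributes $g_F(\nabla_Y v, Q) = g_F(Q, \nabla v(Y))$, which is the first term of the claimed formula.

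For the remaining term I would use that every $A \in T_{J_0}\scr{J}_\pi$, in particular $v$ and $Q$, is a $g_F$-symmetric endomorphism: from $AJ_0 + J_0 A = 0$ and the compatibility condition $\omega_F(A\cdot,\cdot) + \omega_F(\cdot, A\cdot) = 0$ in \eqref{Eq:tangent_J_pi}, together with $g_F(\cdot,\cdot) = \omega_F(\cdot, J_0\cdot)$, one computes $g_F(Au, w) = \omega_F(Au, J_0 w) = -\omega_F(u, AJ_0 w) = \omega_F(u, J_0 Aw) = g_F(u, Aw)$, so $v^* = v$ and $Q^* = Q$ for the $g_F$-adjoint. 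The standard adjoint identities $g_F(ST, U) = g_F(T, S^*U)$ and $g_F(ST, U) = g_F(S, U T^*)$ for the induced inner product on endomorphisms then give $g_F(v\circ(\nabla Y), Q) = g_F(\nabla Y, vQ)$ and $g_F((\nabla Y)\circ v, Q) = g_F(\nabla Y, Qv)$, whence
\begin{equation*}
g_F\bigl(v\circ(\nabla Y) - (\nabla Y)\circ v,\, Q\bigr) = g_F(\nabla Y,\, vQ - Qv) = g_F(vQ - Qv,\, \nabla Y).
\end{equation*}
Adding the two contributions yields the lemma.

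The only delicate points are bookkeeping ones: checking that $\m{L}_Y v$ for vertical $Y$ is again vertical and fibrewise (so that the fibrewise connection and $g_F$ genuinely suffice), and keeping the two adjoint rearrangements consistent with whatever sign convention is fixed for $v$, $Q$ viewed as $\m{V}^{1,0}$-valued $(0,1)$-forms versus real endomorphisms of $\m{V}$. I do not expect a real obstacle here: the whole content of the lemma is the cancellation of Christoffel symbols in the first step together with the self-adjointness of $v$ and $Q$ in the second.
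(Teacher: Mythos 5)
Your proof is correct and takes essentially the same approach as the paper, which simply delegates the verification to a computation in Riemannian normal coordinates (citing \cite{ScarpaStoppa_HcscK}); the coordinate expression for $A_2(Y)$ recorded just before the lemma is precisely your identity $\m{L}_Y v = \nabla_Y v + v\circ(\nabla Y) - (\nabla Y)\circ v$ read at a point where the Christoffel symbols vanish. Your coordinate-free reorganisation, isolating torsion-freeness of $\nabla$ and the $g_F$-self-adjointness of $v$ and $Q$ (a consequence of the compatibility condition \eqref{Eq:tangent_J_pi}) as the two structural ingredients, is the same argument presented more transparently.
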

The proof of the lemma is obtained by computing the different quantities in Riemannian coordinates \cite[\S 4.2]{ScarpaStoppa_HcscK}.

In light of the lemma, the adjoint to $A_2$ can be formally written as
\begin{equation*}
A_2^*(Q) = -(\nabla v)^*Q - \nabla^*([v,Q]).
\end{equation*}
If $Q = \m{A}(\varphi)$, the first term is of order 3.
So we have:
\begin{equation*}
\m{A}^*\m{A}(\varphi) = -\mrm{div}\left( J_0\nabla_{\m{V}}^*(v(\m{L}_{X_{\varphi}}v)_{\m{V}}-(\m{L}_{X_\varphi} v)_{\m{V}}v)\right)+\text{lower order terms}.
\end{equation*}
From this expression, we see that all the quantities involved are vertical. This means that, as an operator on the global sections of the vector bundle $E$, the operator
\begin{equation*}
\m{A}^*\m{A} : C^\infty(E) \to C^\infty(E)
\end{equation*}
is of order $0$. Indeed, let us denote by $r$ the rank of $E$ and consider a local frame $h_1, \dots, h_r$ of $E$. Then we can write a local section $h = \sum_i f_ih_i$, with $f_i \in C^\infty(B)$. Then
\begin{equation*}
\m{A}^*\m{A} (h) = \sum_i f_i \m{A}^*\m{A}(h_i).
\end{equation*}
Thus, as an operator on the global sections $C^\infty(E)$, the operator $\widehat{\m{L}}$ is elliptic, since $\m{R}^*\m{R}$ is from \cite[\S 4]{DervanSektnan_OSC1} and $\m{A}^*\m{A}$ is of lower order. We have established the following:
\begin{theorem}\label{Thm:linearisation_genOSC}
Let $\widehat{\m{L}}$ be the linearisation of the optimal symplectic connection equation \eqref{Eq:genOSC}. Then $\widehat{\m{L}}$ is an elliptic operator of order two on the global sections of $E$ which is self-adjoint and whose kernel consists of fibrewise $J_0$-holomorphy potentials which are also global $J_s$-holomorphy potentials for all $s$.
\end{theorem}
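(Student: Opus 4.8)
The plan is to read off all three assertions—self-adjointness, second-order ellipticity, and the description of the kernel—from the structural identities already established, so that essentially no new computation is needed beyond bookkeeping. I would start from the bilinear form for $\widehat{\m{L}}$ obtained in Lemma \ref{Lemma:linearisation}, which exhibits it as
\[
\widehat{\m{L}} = \m{R}^*\m{R} + \lambda\, \m{A}^*\m{A}
\]
acting on $C^\infty(E)$, with $\m{R}$ the operator \eqref{Eq:operator_R} and $\m{A}$ the operator in \eqref{Eq:relative_linearised_infinitesimal_action}. Self-adjointness is then immediate, since the right-hand side of the formula in Lemma \ref{Lemma:linearisation} is visibly symmetric under exchanging $\varphi$ and $\psi$; one must also record that both summands genuinely preserve $C^\infty(E)$—for $\m{R}^*\m{R}$ this is the identification with $p_E\circ\m{L}_1$ from \S\ref{Subsec:OSC_classic}, and for $\m{A}^*\m{A}$ it follows from the order-zero statement discussed next.

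For ellipticity I would invoke that $\m{R}^*\m{R}$ is elliptic of order two on $C^\infty(E)$, which is \cite[\S4]{DervanSektnan_OSC1}, and then argue that $\m{A}^*\m{A}$ is a differential operator of order zero on $C^\infty(E)$, so it does not contribute to the leading symbol. For the latter I would use the factorisation $\m{A}=A_2\circ A_1$ with $A_1(\varphi)=\mrm{grad}^{\omega_F}\varphi$ and $A_2(Y)=-\m{L}_Y v$: the local Riemannian-coordinate computations above show that $\m{A}$, hence also $\m{A}^*\m{A}$ (via the lemma expressing $g_F(\m{L}_Yv,Q)$), is a second-order operator all of whose derivatives are \emph{vertical}. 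Writing a local section of $E$ as $h=\sum_i f_i h_i$ in a local frame $h_1,\dots,h_r$ of $E\to B$ with $f_i\in C^\infty(B)$, and using that $\m{A}^*\m{A}$ only differentiates in the fibre direction while the $f_i$ are pulled back from $B$, one gets $\m{A}^*\m{A}(h)=\sum_i f_i\,\m{A}^*\m{A}(h_i)$, i.e. $\m{A}^*\m{A}$ is $C^\infty(B)$-linear. Therefore the principal symbol of $\widehat{\m{L}}$ equals that of $\m{R}^*\m{R}$, and $\widehat{\m{L}}$ is elliptic of order two.

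For the kernel I would note that, $\widehat{\m{L}}=\m{R}^*\m{R}+\lambda\,\m{A}^*\m{A}$ being a sum of non-negative self-adjoint operators, $\langle\widehat{\m{L}}\varphi,\varphi\rangle=0$ forces $\m{R}\varphi=0$ and $\m{A}\varphi=0$, so $\ker\widehat{\m{L}}=\ker\m{R}\cap\ker\m{A}$. This is exactly the content of Proposition \ref{Prop:kernel_linearisation}: $\ker\m{R}$ consists of fibrewise $J_0$-holomorphy potentials that are global holomorphy potentials with respect to $\omega_k$ (\S\ref{Subsec:OSC_classic}), while Lemma \ref{Lemma:ker_A} identifies $\ker A$—hence $\ker\m{A}$, since $\dd_0\Phi$ is injective—with those $\varphi\in C^\infty(E)$ that are fibrewise holomorphy potentials for every $J_s$. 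Combining the two conditions and using that $\delbar_B$ does not depend on $s$ (Lemma \ref{Lemma:deformations_preserve_HV}) gives $\delbar_s\,\mrm{grad}^{\omega_F}\varphi=0$ for all $s$, which is the stated description of the kernel.

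The one point demanding genuine care—already dispatched in the discussion preceding the statement—is the order-zero claim for $\m{A}^*\m{A}$: although $\m{A}$ is a genuine second-order \emph{vertical} differential operator on functions, when regarded as an operator on the finite-rank bundle $E\to B$ it contributes nothing to the horizontal symbol, so it is subordinate to $\m{R}^*\m{R}$ and leaves ellipticity and the order intact. Everything else in the argument is bookkeeping: quoting Lemma \ref{Lemma:linearisation}, Lemma \ref{Lemma:ker_A}, Proposition \ref{Prop:kernel_linearisation}, and the ellipticity of $\m{R}^*\m{R}$ from \cite{DervanSektnan_OSC1}.
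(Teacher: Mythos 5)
Your argument is correct and follows essentially the same route as the paper: self-adjointness from the symmetric bilinear form in Lemma~\ref{Lemma:linearisation}, ellipticity by combining the second-order ellipticity of $\m{R}^*\m{R}$ with the observation (via the local vertical-coordinate computations and $C^\infty(B)$-linearity on a frame of $E$) that $\m{A}^*\m{A}$ is order zero on $C^\infty(E)$, and the kernel description via $\ker\widehat{\m{L}}=\ker\m{R}\cap\ker\m{A}$ together with Lemma~\ref{Lemma:ker_A} and Proposition~\ref{Prop:kernel_linearisation}. The only cosmetic difference is that you make the non-negativity argument for $\ker\widehat{\m{L}}=\ker\m{R}\cap\ker\m{A}$ explicit, whereas the paper reads this off directly from the injectivity of $\dd_0\Phi$; both are valid and equivalent.
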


\subsection{Approximate solutions in the case of discrete automorphism group}\label{Subsec:approx_solution_discrete_autom}
Let $(\m{X}, \m{H})\to B\times S$ a family of submersions with central fibre the fibration $(X,H)\to (B,L)$ as before. In this section we construct approximate constant scalar curvature K\"ahler metrics on the total space of $\pi_s:(X_s, H_s){\to} (B, L)$, where $(X_s, H_s)$ is a deformation of a fibration $\pi: (X, H)\to (B,L)$ whose fibres are cscK and where we assume that $(X_s, H_s)$ admits an optimal symplectic connection.
We do so by using an \emph{adiabatic limit}, such as in \cite{Fine_cscK_fibrations,DervanSektnan_OSC1}.

We make the following assumptions:
\begin{enumerate}[label = (\roman*)]
\item The base form $\omega_B \in c_1(L)$ is twisted cscK with respect to the pull-back via the moduli map $q$ of the Weil-Petersson metric, as in Definition \ref{Def:twisted_cscK_extremal};
\item \label{Item:Autq_discrete} The group $\mrm{Aut}(q)$ defined in \ref{Def:aut(q)} is discrete. As recalled in the discussion following Definition \ref{Def:aut(q)}, this implies that the linearisation at a solution of the twisted cscK equation on the base is invertible;
\item $\mrm{Aut}(X_s, H_s)$ is discrete. Thanks to Proposition \ref{Prop:kernel_linearisation}, this guarantees that the operator $\widehat{\m{L}}$ is invertible and also that the global Lichnerowicz operator on $X_s$ with respect to $\omega_k$ is invertible.
\end{enumerate}

Let $k \gg 0$ be such that
\begin{equation*}
\omega_k = \omega_X + k\omega_B
\end{equation*}
is a K\"ahler metric on $X$, and let $s^2 = \lambda k^{-1}$ for $\lambda >0$.
\begin{theorem}\label{Thm:existence_cscK_metrics}
With the assumptions listed above, let $\omega_X$ be an optimal symplectic connection for the family $\m{X}\to B \times S$. Then for all $k\gg0$ there exists a constant scalar curvature K\"ahler metric on $X_s$ for $s\ne 0$, in the class $[\omega_X ]+ k[\omega_B]$.
\end{theorem}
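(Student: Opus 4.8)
The plan is to run the adiabatic limit argument of Fine \cite{Fine_cscK_fibrations} and Dervan--Sektnan \cite{DervanSektnan_OSC1}: for $k \gg 0$ we first construct, by adding a K\"ahler potential to $\omega_k = \omega_X + k\omega_B$, an \emph{approximate} cscK metric on $(X, J_s)$ in the class $[\omega_X] + k[\omega_B]$, whose scalar curvature equals the topological average $\widehat{S}_k$ of that class up to an error of order $k^{-N}$ with $N$ as large as we wish, and then we perturb it to a genuine solution by a quantitative implicit function theorem. Since the relation $s^2 = \lambda k^{-1}$ fixes $J_s$ once $k$ is chosen, for each $k$ this is the ordinary cscK problem of finding a zero of $\varphi \mapsto \mrm{Scal}(\omega_k + i\del\delbar\varphi, J_s) - \widehat{S}_k$ on the complex manifold $(X, J_s) = X_s$.

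\emph{Step 1 (approximate solutions).} The expansion of $\mrm{Scal}(\omega_k, J_s)$ proved above, combined with \eqref{Eq:mu_pi_expansion}, reads
\begin{equation*}
\mrm{Scal}(\omega_k, J_s) = \widehat{S}_b + k^{-1}\Big(\psi_B + p_E\big(\Theta(\omega_X, J_0)\big) + \tfrac{\lambda}{2}\nu(v) + \psi_R\Big) + O\!\left(k^{-3/2}\right),
\end{equation*}
where $\psi_B = \mrm{Scal}(\omega_B) - \Lambda_{\omega_B}\alpha_{WP}$ is pulled back from $B$ and $\psi_R \in C^\infty(R, J_0)$. The leading coefficient $\widehat{S}_b$ is the correct constant. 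In the coefficient of $k^{-1}$: the $C^\infty(E)$-part $p_E(\Theta(\omega_X, J_0)) + \tfrac{\lambda}{2}\nu(v)$ vanishes because $\omega_X$ is an optimal symplectic connection \eqref{Eq:genOSC}; the $C^\infty(B)$-part $\psi_B$ is a constant $c_B$ because $\omega_B$ is twisted cscK with twisting $\alpha_{WP}$ (Definition \ref{Def:twisted_cscK_extremal}), and integrating over the fibres identifies $c_B$ with the $k^{-1}$-coefficient of $\widehat{S}_k$; so only the $C^\infty(R)$-term $\psi_R$ remains. We then run the usual cascade. Suppose that after finitely many corrections the scalar curvature equals $\widehat{S}_k + k^{-(r+1)/2}e_r + O(k^{-(r+2)/2})$ and decompose $e_r = e_r^{B} + e_r^{E} + e_r^{R}$ via \eqref{Eq:splitting_function_space}. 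The Lichnerowicz operator of $\omega_k$ expands as $\m{D}_{\m{V}}^{*}\m{D}_{\m{V}} + k^{-1}\m{L}_1 + k^{-2}\m{L}_2 + \cdots$; the $C^\infty(E)$-diagonal block is $\widehat{\m{L}} = \m{R}^*\m{R} + \m{A}^*\m{A}$ (Lemma \ref{Lemma:linearisation}), appearing at order $k^{-1}$, and the $C^\infty(B)$-diagonal block is the twisted Lichnerowicz operator $\m{L}_\alpha$ of \eqref{Eq:linearisation_twisted_eq}, appearing at order $k^{-2}$. Since $\m{D}_{\m{V}}^{*}\m{D}_{\m{V}}$ is invertible on $C^\infty(R)$, $\widehat{\m{L}}$ is invertible on $C^\infty(E)$ (Theorem \ref{Thm:linearisation_genOSC} and Proposition \ref{Prop:kernel_linearisation}, together with the hypothesis that $\mrm{Aut}(X_s, H_s)$ is discrete), and $\m{L}_\alpha$ is invertible on $C^\infty(B)$ (by the cited result of Dervan--Sektnan and the hypothesis that $\mrm{Aut}(q)$ is discrete), we can solve for potentials $\varphi^R,\varphi^E,\varphi^B$ in the three summands and add $k^{-(r+1)/2}\varphi^{R} + k^{-(r-1)/2}\varphi^{E} + k^{-(r-3)/2}\varphi^{B}$. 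Because $\m{D}_{\m{V}}^{*}\m{D}_{\m{V}}$ annihilates $C^\infty(E)\oplus C^\infty(B)$ and $\m{L}_1$ further annihilates the $C^\infty(B)$-part, each summand affects the scalar curvature only from order $k^{-(r+1)/2}$ onwards, where it cancels $e_r$, so the lower orders are preserved. Iterating yields, for any $N$, a metric $\omega_k^{(N)}$ with $\mrm{Scal}(\omega_k^{(N)}, J_s) - \widehat{S}_k = O(k^{-N})$ in a fixed Sobolev norm.

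\emph{Step 2 (perturbation).} At a metric of almost constant scalar curvature the linearisation of $\varphi\mapsto\mrm{Scal}(\omega_k^{(N)}+i\del\delbar\varphi, J_s)$ is $-\m{D}^*\m{D}$ up to lower order, $\m{D}^*\m{D}$ being the Lichnerowicz operator of $\omega_k^{(N)}$. Using \eqref{Eq:splitting_function_space} and the invertibility of the three model operators, one shows --- exactly as in \cite[\S 5]{Fine_cscK_fibrations} and \cite[\S 5]{DervanSektnan_OSC1} --- that $\m{D}^*\m{D}$ is invertible with $\|(\m{D}^*\m{D})^{-1}\| \le C k^{p}$ for a fixed exponent $p$, the bound reflecting that the three blocks sit at orders $k^0$, $k^{-1}$, $k^{-2}$. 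Together with a uniform bound on the quadratic-and-higher remainder of the scalar curvature operator on balls of controlled radius, and choosing $N$ large compared with $p$, the standard fixed-point form of the implicit function theorem produces, for every $k \gg 0$, a function $\varphi_k$ with $\mrm{Scal}(\omega_k^{(N)} + i\del\delbar\varphi_k, J_s) = \widehat{S}_k$. Then $\omega_k^{(N)} + i\del\delbar\varphi_k$ is a constant scalar curvature K\"ahler metric on $X_s$ in $[\omega_X] + k[\omega_B]$, which proves the theorem.

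\emph{Main obstacle.} The analytic heart is Step 2: proving uniform-in-$k$ invertibility of $\m{D}^*\m{D}$ with polynomial control on the inverse across the three scales $C^\infty(R)$, $C^\infty(E)$, $C^\infty(B)$, and checking that the $s$-dependent contributions --- in particular everything carrying the deformation $\nu(v)$ --- really enter only at order $k^{-3/2}$ and beyond, so that the block structure of the linearisation is the one described above. This last point relies on the relative Kuranishi theorem \ref{Thm:relative_Kuranishi}, which guarantees that $\widehat{\m{L}} = \m{R}^*\m{R} + \m{A}^*\m{A}$ is the correct $C^\infty(E)$-linearisation at the approximate solution and not only at the exact model. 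Granting these estimates, the bookkeeping of the expansion and of the cascade is routine and parallels \cite{DervanSektnan_OSC1}.
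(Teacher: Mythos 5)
Your proposal follows essentially the same strategy as the paper: construct approximate solutions order by order using the splitting $C^\infty(X) = C^\infty(B) \oplus C^\infty(E) \oplus C^\infty(R)$ together with the invertibility of the model operators $\m{D}_{\m{V}}^*\m{D}_{\m{V}}$, $\widehat{\m{L}}$, and $\m{L}_\alpha$ at orders $k^0$, $k^{-1}$, $k^{-2}$ respectively (this is exactly Lemma \ref{Lemma:linearisation_scal_kn} and Proposition \ref{Prop:approximate_solutions}), and then apply the quantitative implicit function theorem with the $k$-dependent bound on the inverse of the Lichnerowicz operator. You also correctly single out the analytic core (uniform invertibility across the three scales and the placement of the $s$-dependent contributions), which the paper handles via Lemma \ref{Lemma:linearisation_scal_kn} and the $\norm{P_{k,r}} \le Ck^{5/2}$ estimate.
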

In the proof of Theorem \ref{Thm:existence_cscK_metrics}, we will relate $s$ and $k$ as above, namely $s^2 = \lambda k^{-1}$, so we will sometimes denote also the corresponding complex structure by $J_k$. Since all the $J_s$ are isomorphic, Theorem \ref{Thm:existence_cscK_metrics} still gives the existence of a cscK metric in each adiabatic class for all $J_s$.
The adiabatic limit technique consists in constructing inductively approximated solutions, which have constant scalar curvature up to a certain order in $k^{-1/2}$, then using the implicit function theorem to perturb an approximate solution to a genuine solution. The following result establishes the approximate solution.
\begin{proposition}\label{Prop:approximate_solutions}
With the assumptions listed above, for all $k\gg0$ and for each $r$ there exist functions
\begin{equation*}
f_{B,1}, \dots, f_{B,r} \in C^\infty(B) \qquad f_{E,1}, \dots, f_{E,r} \in C^\infty(E) \qquad f_{R,1}, \dots, f_{R,r} \in C^\infty(R)
\end{equation*}
and constants
\begin{equation*}
\widehat{S}_1, \dots, \widehat{S}_r
\end{equation*}
such that the K\"ahler potentials
\begin{equation*}
h^B_{k,r} = \sum_{j=2}^r {k^{j-2}}{f_{B,j}} \qquad h^E_{k,r} = \sum_{j=2}^r {k^{(j-1)/2}}{f_{E,j}} \qquad h^R_{k,r} = \sum_{j=2}^r {k^{j/2}}{f_{R,j}}
\end{equation*}
satisfy
\begin{equation*}
\mrm{Scal}\left(\omega_k + i\del\delbar\left(h^B_{k,r} + h^E_{k,r} + h^R_{k,r}\right), J_k\right) = \widehat{S}_b + \sum_{j=1}^r {k^{j/2}}{\widehat{S}_j} + O\left(k^{(-r-1)/2}\right).
\end{equation*}
\end{proposition}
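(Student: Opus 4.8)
The plan is to argue by induction on $r$, building the three families of potentials one order at a time in $k^{-1/2}$, by the adiabatic limit method of \cite{Fine_cscK_fibrations, DervanSektnan_OSC1}. The mechanism is as follows: at each step one decomposes the leading term of the error in the scalar curvature according to the splitting \eqref{Eq:splitting_function_space}
\begin{equation*}
C^\infty(X, \bb{R}) = C^\infty(B) \oplus C^\infty(E) \oplus C^\infty(R),
\end{equation*}
and then inverts, on each summand separately, the leading part of the linearisation of $\varphi \mapsto \mrm{Scal}(\omega_k + i\del\delbar\varphi, J_k)$ at $\omega_k$. Expanding this linearisation in inverse powers of $k$ — using the expansion of the Laplacian and Lichnerowicz operators of $\omega_k$ from \cite[\S 4]{DervanSektnan_OSC1} together with the expansion \eqref{Eq:mu_pi_expansion} of the vertical scalar curvature in $s$ — one finds a different leading operator on each summand, occurring at successively lower orders in $k$: on $C^\infty(R)$ it is the vertical Lichnerowicz operator $\m{D}_{\m{V}}^*\m{D}_{\m{V}}$, invertible there because $R$ is by construction the fibrewise $L^2$-orthogonal complement of $\ker \m{D}_{\m{V}}^*\m{D}_{\m{V}}$; one power of $k^{-1}$ deeper, on $C^\infty(E)$, it is the operator $\widehat{\m{L}} = \m{R}^*\m{R} + \m{A}^*\m{A}$ of Theorem \ref{Thm:linearisation_genOSC}, invertible because $\mrm{Aut}(X_s, H_s)$ is discrete, so $\ker \widehat{\m{L}} = 0$ by Proposition \ref{Prop:kernel_linearisation}; and deeper still, on $C^\infty(B) = \pi^*C^\infty(B)$, it is the linearisation $\m{L}_{\alpha_{WP}}$ of the twisted cscK operator on $(B,\omega_B)$ from \eqref{Eq:linearisation_twisted_eq}, invertible modulo constants because $\mrm{Aut}(q)$ is discrete. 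This is exactly why the base potentials $f_{B,j}$ are introduced with the largest powers of $k$, the $R$-potentials $f_{R,j}$ with the smallest, and the $E$-potentials $f_{E,j}$ in between; and since $C^\infty(E)$ and $C^\infty(B)$ are annihilated by $\m{D}_{\m{V}}^*\m{D}_{\m{V}}$, the induced block structure of the linearisation with respect to \eqref{Eq:splitting_function_space} is upper triangular, so the inductive step can be solved one block at a time.

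For the base case one uses the expansion of $\mrm{Scal}(\omega_k, J_k)$ obtained above:
\begin{equation*}
\mrm{Scal}(\omega_k, J_k) = \widehat{S}_b + k^{-1}\left( \psi_B + p_E(\Theta(\omega_X, J_0)) + \frac{\lambda}{2}\nu(v) + \psi_R \right) + O(k^{-3/2}),
\end{equation*}
where $\psi_B = \mrm{Scal}(\omega_B) - \Lambda_{\omega_B}\alpha_{WP} \in C^\infty(B)$, the two middle terms lie in $C^\infty(E)$, and $\psi_R \in C^\infty(R)$. The optimal symplectic connection equation \eqref{Eq:genOSC} kills the $C^\infty(E)$-component of the $k^{-1}$-coefficient; the twisted cscK hypothesis on $\omega_B$ (Definition \ref{Def:twisted_cscK_extremal}) makes $\psi_B$ a constant, which is absorbed into the $\widehat{S}_j$; and the remaining term $\psi_R$ is cancelled by the first correction $f_{R,2} \in C^\infty(R)$, chosen so that $\m{D}_{\m{V}}^*\m{D}_{\m{V}} f_{R,2} = -\psi_R$.

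For the inductive step, assume the potentials have been constructed up to order $r$, so that, writing $\omega_{k,r}$ for the corrected K\"ahler form, one has $\mrm{Scal}(\omega_{k,r}, J_k) = \widehat{S}_b + \sum_{j=1}^{r} k^{-j/2}\widehat{S}_j + k^{-(r+1)/2}\Phi_{r+1} + O(k^{-(r+2)/2})$ for some $\Phi_{r+1} \in C^\infty(X,\bb{R})$, and decompose $\Phi_{r+1} = \Phi^B_{r+1} + p_E\Phi_{r+1} + \Phi^R_{r+1}$ along \eqref{Eq:splitting_function_space}. Adding the corrections $f_{R,r+1}$, $f_{E,r+1}$, $f_{B,r+1}$ at the orders prescribed by the ansatz and linearising, the coefficient of $k^{-(r+1)/2}$ in the scalar curvature of the new form becomes $\Phi_{r+1}$ together with $\m{D}_{\m{V}}^*\m{D}_{\m{V}}(f_{R,r+1})$, $\widehat{\m{L}}(f_{E,r+1})$ and $\m{L}_{\alpha_{WP}}(f_{B,r+1})$ acting on the three respective components, plus cross-terms of strictly lower order. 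One then solves, in this order, $\m{D}_{\m{V}}^*\m{D}_{\m{V}} f_{R,r+1} = -\Phi^R_{r+1}$, then $\widehat{\m{L}} f_{E,r+1} = -p_E\Phi_{r+1} - (\text{correction from } f_{R,r+1})$, then $\m{L}_{\alpha_{WP}} f_{B,r+1} = -\Phi^B_{r+1} - (\text{corrections}) + \widehat{S}_{r+1}$, where $\widehat{S}_{r+1}$ is the $\omega_B$-average of the right-hand side, chosen so that the remaining function lies in the image of $\m{L}_{\alpha_{WP}}$. The three invertibility statements make each of these solvable; the new error is $O(k^{-(r+2)/2})$, which completes the induction.

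The main obstacle is not any of the invertibility results, which are already in hand, but the bookkeeping that makes the inductive step go through. One must check that, with the staggered powers of $k$ built into the ansatz and with the two expansion parameters tied together by $s^2 = \lambda k^{-1}$, the linearisation of $\varphi \mapsto \mrm{Scal}(\omega_k + i\del\delbar\varphi, J_k)$ genuinely has the block-triangular leading structure described above, and that every cross-term created at a given step is of strictly lower order in $k^{-1/2}$, so that it can be pushed into the next step. The two delicate points are: first, that contributions linear in $s$ project to zero on $C^\infty(E)$ — this is precisely why the second-order quantity $\nu$, rather than a first-order one, appears, and it relies on the fibrewise moment map $\mu$ vanishing to second order in $s$ (Proposition \ref{Prop:expansion_mu_nu}); and second, that the $s$-dependence of the vertical Lichnerowicz operator, restricted to $C^\infty(E)$, recombines with the $\nu$-term into the operator $\m{A}^*\m{A}$ of Lemma \ref{Lemma:linearisation} at order $k^{-1}$. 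Controlling the interaction of the $k$- and $s$-expansions uniformly over the fibres is the only genuinely new analytic ingredient compared with the relatively cscK case of \cite{DervanSektnan_OSC1}.
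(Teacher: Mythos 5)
Your overall strategy matches the paper's: the same inductive adiabatic-limit scheme, the same decomposition $C^\infty(X,\bb{R}) = C^\infty(B) \oplus C^\infty(E) \oplus C^\infty(R)$, the same three leading operators $\m{D}_{\m{V}}^*\m{D}_{\m{V}}$, $\widehat{\m{L}}$ and $\m{L}_{\alpha_{WP}}$ at staggered $k$-orders, the same invertibility appeals, and the same two ``delicate points'' about the $s$-expansion of the vertical scalar curvature. The base case is correct.

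There is however a genuine error in your inductive step. You assert that after adding $f_{R,r+1}$, $f_{E,r+1}$, $f_{B,r+1}$ the coefficient of $k^{-(r+1)/2}$ becomes $\Phi_{r+1}$ plus the three diagonal operators ``plus cross-terms of strictly lower order,'' and you then solve in the order $R$, $E$, $B$. Both claims are wrong, and for the same reason. Because $f_{B,r+1}$ is added at a power of $k$ exactly two larger than $k^{-(r+1)/2}$, its leading contribution $D_2(f_{B,r+1})$ lands at order $k^{-(r+1)/2}$ and in general has nonzero projections onto $C^\infty(E)$ and $C^\infty(R)$; similarly $D_1(f_{E,r+1})$ lands at order $k^{-(r+1)/2}$ with a nonzero $C^\infty(R)$-component. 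These cross-terms are \emph{at the same order}, not lower order, and this is exactly why the block structure is triangular. Ordering the blocks $(R,E,B)$, as you implicitly do when calling the matrix upper triangular, the system must be solved from the bottom up: first $f_{B,r+1}$, then $f_{E,r+1}$ (absorbing $p_E D_2(f_B)$), then $f_{R,r+1}$ (absorbing $p_R D_1(f_E)$ and $p_R D_2(f_B)$). Your order $R$, $E$, $B$ leaves the freshly introduced $p_E D_2(f_B)$ and $p_R(D_1(f_E) + D_2(f_B))$ uncancelled, so the error at order $k^{-(r+1)/2}$ is not a constant and the induction breaks. (The ``correction from $f_{R,r+1}$'' you insert into the $E$-equation is a phantom: $\m{D}_{\m{V}}^*\m{D}_{\m{V}}$ sends $C^\infty(R)$ into $C^\infty(R)$, so $f_R$ does not feed back into the $E$-block at this order.) The paper flags exactly this issue when it notes that ``one can make the $C^\infty(E)$-term constant without affecting the $C^\infty(B)$-term, but it cannot work the other way around, and similarly for the $C^\infty(R)$-term.''

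A second, more cosmetic, point: the $C^\infty(B)$-corrections only act at \emph{integer} powers of $k^{-1}$ (claim (2) of the paper's Lemma on the linearisation implies $D_{j/2}$ kills $C^\infty(B)$ for odd $j$), so at half-integer steps there is no $B$-equation to solve at all. Your write-up treats all three blocks as if they appear at every step, which is not incorrect but obscures the structure the paper exploits.

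To repair the proof: replace the claim that cross-terms are of lower order with the correct statement that they are same-order but triangular, and reverse the solving order to $B$, then $E$, then $R$, updating each right-hand side with the cross-terms generated by the previously chosen potential. With that change the argument goes through exactly as in the paper.
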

\begin{proof}
With the hypotheses of $\omega_X$ being an optimal symplectic connection and $\omega_B$ being a twisted cscK metric on the base, we have
\begin{equation}\label{Eq:adiabatic_limit_step_0}
\mrm{Scal}(\omega_k) = \widehat{S}_b + k^{-1}\left( c_B + \psi_{R,1}\right) + O\left(k^{-3/2}\right),
\end{equation}
where $\psi_{R,1} \in C^\infty(R)$.
In order to make the $k^{-1}$-term constant we add a potential $k^{-1}f \in C^\infty(R)$ to $\omega_k$. Then
\begin{equation*}
\mrm{Scal}(\omega_k + k^{-1}i\del\delbar f) = \widehat{S}_b + k^{-1}\left( c_B + \psi_{R,1} - \m{D}_{\m{V}}^*\m{D}_{\m{V}}f\right) + O\left(k^{-3/2}\right),
\end{equation*}
where the linearisation of the scalar curvature to order 0 in $k$ coincides with (minus) the Lichnerowicz operator with respect to the complex structure $J_0$, since the scalar curvature is constant in order 0, and the higher order terms fall into $O\left(k^{-3/2}\right)$. Since $\m{D}_{\m{V}}^*\m{D}_{\m{V}}$ is a fibrewise elliptic differential operator and $C^\infty(R)$ is orthogonal to its kernel, we can find a solution $f_{R,1}$ of
\begin{equation}\label{Eq:adiabatic_limit_step_1}
\psi_{R,1} - \m{D}_{\m{V}}^*\m{D}_{\m{V}}f = constant.
\end{equation}
Summing up, we have proved step $n=1$ of Proposition \ref{Prop:approximate_solutions}, with $f_{B,1} = 0 = f_{E,1}$. We define
\begin{equation*}
\omega_{k,1} = \omega_k + k^{-1}i\del\delbar f_{R,1} 
\end{equation*}
such that
\begin{equation*}
\mrm{Scal}(\omega_{k,1}) = \widehat{S}_b + k^{-1} \widehat{S}_1 + O\left(k^{-3/2}\right).
\end{equation*}
To proceed with the approximate solutions, we need the linearisation of the scalar curvature at a metric $(\omega_{k,r}, J_k)$.
\begin{lemma}\label{Lemma:linearisation_scal_kn}
The linearisation of the scalar curvature of $\omega_{k,r}$ satisfies
\begin{equation*}
\m{L}_{k,r} = -\m{D}_{\m{V}}^*\m{D}_{\m{V}} + k^{-1}D_1 + {k^{-3/2}}D_{3/2} + k^{-2}D_2 + O\left(k^{-5/2}\right),
\end{equation*}
where
\begin{enumerate}
\item ${D}_{\m{V}}^*\m{D}_{\m{V}}$ is the vertical Lichnerowicz operator with respect to the complex structure $J_0$;
\item If $f \in C^\infty(B)$, $D_{j/2}(f)=0$ for all $j$;
\item If $f \in C^\infty(B)$, $D_1(f) = 0$ and
\begin{equation*}
\int_{X/B} D_2(f) \omega_X^m\wedge\omega_B^n = -\m{L}_\alpha (f),
\end{equation*}
where $\m{L}_\alpha$ is the linearisation of the twisted cscK equation on the base, with twisting the Weil-Petersson form $\alpha_{WP}$, at a solution, defined in \eqref{Eq:linearisation_twisted_eq}.
\item  If $f \in C^\infty(E)$, then
\begin{equation*}
p_E \circ D_1(f) = -p_E \circ \widehat{\m{L}}(f).
\end{equation*}
\end{enumerate}
\end{lemma}
\begin{proof}[Proof of the Lemma]
Let us distinguish the parameter $s$ of the deformation of the complex structure from the parameter $k$ of the polarisation. Consider the case $n=0$, so that we compute the scalar curvature of the metric $(\omega_k, J_s)$.  Then
\begin{equation}\label{Eq:linearisazion_scalar_curvature_ks}
\m{L}_k = \m{L}_{k,0} + O(s),
\end{equation}
where $\m{L}_{k,0}$ is the linearisation of the scalar curvature of $(\omega_k, J_0)$. 
In \cite[Proposition 4.11]{DervanSektnan_OSC1} it is proven that
\begin{equation*}
\m{L}_{k,0} = -{D}_{\m{V}}^*\m{D}_{\m{V}} + k^{-1}D_1' +k^{-2}D_2'+ O\left( k^{-3} \right),
\end{equation*}
from which we see that the term of order zero is indeed the vertical $J_0$-Lichnerowicz operator. This proves claim $(1)$. By imposing the relation $s^2 = \lambda k^{-1/2}$ we see that the $O(s)$-term in \eqref{Eq:linearisazion_scalar_curvature_ks} admits an expansion in powers of $k^{-1/2}$:
\begin{equation*}
k^{-1}D_1'' + {k^{-3/2}}D_{3/2}'' + k^{-2}D_2'' + O\left(k^{-5/2}\right).
\end{equation*}

Claim $(2)$ follows from the fact that the deformation of the complex structure is vertical, thus all the terms involved in the expansion of the scalar curvature coming from the deformation do not have a $C^\infty(B)$-component.

Claims $(3)$ and $(4)$ follow as in \cite[Proposition 4.11]{DervanSektnan_OSC1}.
\end{proof}
The proof of Proposition \ref{Prop:approximate_solutions} now goes by induction, using Lemma \ref{Lemma:linearisation_scal_kn}.
We explain in detail steps $n=\frac{3}{2}$ and $n=2$. We start from the expansion
\begin{equation*}
\mrm{Scal}(\omega_{k,1}) = \widehat{S}_b + k^{-1} \widehat{S}_1 + k^{-3/2}(\psi_{E, 3/2} + \psi_{R, 3/2}) + O\left(k^{-2}\right).
\end{equation*}
We add a potential $k^{-1/2}f_{E}$ to $\omega_{k,1}$. Thus we have
\begin{equation*}
\mrm{Scal}\left(\omega_{k,1} + k^{-1/2}i\del\delbar f_E\right) = \widehat{S}_b + k^{-1}\widehat{S}_1 + k^{-2/3}\left(\psi_{E,3/2} + D_1(f) + \psi_{R,3/2}\right) + O\left(k^{-2}\right).
\end{equation*}
Using Lemma \ref{Lemma:linearisation_scal_kn}, our hypothesis on the automorphism group of $(X_s, H_s)$ and the fact that the linearisation $\widehat{\m{L}}$ of the optimal symplectic connection equation at a solution is elliptic, as proved in Theorem \ref{Thm:linearisation_genOSC}, we can find $f_{E,2}$ such that
\begin{equation*}
\psi_{E,2/3} + p_E\circ D_1(f_E) = constant.
\end{equation*}
This makes the $C^\infty(E)$-term constant to order $k^{-3/2}$. We next add a potential $k^{-3/2}f_R \in C^\infty(R)$ and we obtain
\begin{equation*}
\begin{split}
\mrm{Scal}\left(\omega_{k,1} + i\del\delbar\left(k^{-1/2}f_{E,3/2} + k^{-3/2}f_R\right)\right) &= \widehat{S}_b + k^{-1}\widehat{S}_1 + \\
&+ k^{-3/2}\left(c_{E,3/2} + \psi'_{R,3/2} - \m{D}_{\m{V}}^*\m{D}_{\m{V}}f_R\right) + O\left(k^{-2}\right).
\end{split}
\end{equation*}
Once again, using the fibrewise ellipticity of $\m{D}_{\m{V}}^*\m{D}_{\m{V}}$ and the fact that $C^\infty(R)$ is orthogonal to its kernel, we obtain a solution $f_{R,3/2}$ of the equation
\begin{equation*}
\psi'_{R,2} - \m{D}_{\m{V}}^*\m{D}_{\m{V}}f_R = constant.
\end{equation*}
Thus we have constructed a K\"ahler metric on $X_s$ constant up to order $k^{-3/2}$:
\begin{equation*}
\omega_{k,3/2} = \omega_{k,1} + i\del\delbar\left(k^{-1/2}f_{E,3/2} + k^{-3/2}f_{R,3/2}\right).
\end{equation*}

As for the step $n=2$, we explain how to deal with the $C^\infty(B)$-term. We add a potential $f_{B}$ to $\omega_{k,3/2}$, which amounts to adding a potential $k^{-1}f_B$ to $\omega_B$. Since the scalar curvature of the base affects the order $k^{-1}$-term and not the order zero term, the combined effect on the linearisation is of order $k^{-2}$. This allows us to write
\begin{equation*}
\begin{split}
\mrm{Scal}(\omega_{k,3/2} + i\del\delbar f_B) = \widehat{S}_b + k^{-1}\widehat{S}_1 &+ k^{-3/2} \widehat{S}_{3/2} +\\
&+ k^{-2}\left(\psi_{B,2} - D_2(f_B) + \psi_{E,2} + \psi_{R,2}\right) + O\left(k^{-5/2}\right).
\end{split}
\end{equation*}
Thanks to Lemma \ref{Lemma:linearisation_scal_kn} and to our hypothesis on the automorphism group of the moduli map,
\begin{equation*}
\psi_{B,2} - p_B\circ D_2(f_B) = constant
\end{equation*}
admits a solution, which we denote $f_{B,2}$. This makes the $C^\infty(B)$-term constant to order $k^{-2}$.

The corrections to the $C^\infty(E)$-term and to the $C^\infty(B)$-term now work exactly as in the case $n=3/2$.
\end{proof}

Notice that the order is important: one can make the $C^\infty(E)$-term constant without affecting the $C^\infty(B)$-term, but it cannot work the other way around, and similarly for the $C^\infty(R)$-term.

\begin{remark}
The very first step of the approximate solution procedure, which the expansion \eqref{Eq:adiabatic_limit_step_0}, comes from the fact that in Proposition \ref{Prop:Kuranishi_map_cscK} we have modified the Kuranishi map $\Phi$ in order to meet the requirement that $\mrm{Scal}_{\m{V}}(\omega_X, \Phi(x))$ is a section of $E$, for $x \in V$. If we do not deform the Kuranishi map in this way, we can write the vertical scalar curvature as the sum of the projection onto $C^\infty(E)$ and the projection onto $C^\infty(R)$. The $C^\infty(E)$-part is the map $\mu_\pi$ defined in \ref{Def:mu_pi_nu_pi}, while the $C^\infty(R)$-part introduces a term of order $k^{-1/2}$ in the expansion \eqref{Eq:adiabatic_limit_step_0}, which then becomes
\begin{equation*}
\mrm{Scal}(\omega_k) = \widehat{S}_b + k^{-1/2} \ \psi_{R,0}+ k^{-1}\left( c_B + \psi_{R,1}\right) + O\left(k^{-2}\right).
\end{equation*}
We can get rid of this term by adding a potential $k^{-1/2}i\del\delbar\varphi_{R,0}$ to $\omega_k$, as in equation \eqref{Eq:adiabatic_limit_step_1}.
Indeed, the linearisation given by Lemma \ref{Lemma:linearisation_scal_kn} of the scalar curvature acquires an extra term $\sqrt{k}D_{1/2}$, which is non-zero only on $C^\infty(R)$, so it does not affect the $C^\infty(E)$ and $C^\infty(B)$ parts in the $k^{-1}$-term.
\end{remark}

\subsection{Approximate solutions in the presence of automorphisms}\label{Subsec:approx_sol_extremal}
In this section, we allow the base and the total space to have automorphisms. As before let $\widehat{\pi}: (\m{X},\m{H}) \to (B,L) \times S$ be a degeneration of the fibration $\pi_Y: (Y, H_Y)\to B$ to $\pi:(X, H)\to B$. Let $\omega_X \in c_1(H)$ be a relatively cscK metric on $X$; since $Y$ is a small deformation of $X$, $c_1(H) = c_1(H_Y)$, so we can assume that $\omega_X$ is relatively K\"ahler on $Y$ (as explained in the end of Section \ref{Subsec:family_submersions}).

Recall from Definition \ref{Def:extremal_OSC} that $\omega_X$ is an extremal symplectic connection on $Y$ if
\begin{equation*}
\widehat{\m{L}}\left(p_E (\Theta(\omega_X, J_0)) + \frac{\lambda}{2}\nu \right) = 0,
\end{equation*}
so that the function
\begin{equation*}
h_1 := p_E \left(\Theta(\omega_X, J_0)\right) + \frac{\lambda}{2}\nu
\end{equation*}
is a holomorphy potential for the complex structure of $Y$.

We make the following hypotheses concerning the groups of automorphisms $\mrm{Aut}(\pi_Y)$ and $\mrm{Aut}(q)$ defined in \ref{Def:aut(pi)} and \ref{Def:aut(q)}:
\begin{enumerate}[label=(\roman*)]
\item There is an action of $\mrm{Aut}(\pi_Y)$ on $(\m{X}, \m{H})$ which is equivariant with respect to the projection onto $S$, meaning that it acts on each $X_s$ as a subgroup of automorphisms of $(X_s, H_s)$. Since the action extends to the central fibration, this assumption allows us to view $\mrm{Aut}(\pi_Y)$ as a subgroup of $\mrm{Aut}(\pi)$. In particular, recall from Remark \ref{Rmk:Aut_intersection} that $\mrm{ker} \widehat{\m{L}} = \mrm{Lie}(\mrm{Aut}(\pi_Y))\cap \mrm{Lie}(\mrm{Aut}(\pi))$. With this assumption, we obtain
\begin{equation*}
\mrm{Ker}\ \widehat{\m{L}} = \mrm{Lie}(\mrm{Aut}(\pi_Y)),
\end{equation*}
and $h_1$ is a holomorphy potential also on $X$.
\item All automorphisms of the moduli map $q$ lift to $(Y, H_Y)$.
\end{enumerate}
The first hypothesis is motivated by the analogous definition of test configurations which are equivariant with respect to the automorphisms of the fibres, which are used to test K-polystability of polarised manifolds.

As a K\"ahler metric on the base, we require that $B$ admits a twisted extremal metric, with twisting form the Weil-Petersson form $\alpha_{WP}$ \eqref{Eq:WPform}, i.e.
\begin{equation*}
\mrm{Scal}(\omega_B) - \Lambda_{\omega_X}\alpha_{WP} = b_1 \in \mrm{ker}\m{D}_B,
\end{equation*}
where $\m{D}_B$ is the Lichnerowicz operator on the base.

\begin{theorem}\label{Thm:existence_extremal_metrics}
With the assumptions listed above, let $\omega_X$ be an extremal symplectic connection for the family $\m{X}\to B \times S$. Then for all $k\gg0$ there exists an extremal K\"ahler metric on $X_s$ for $s\ne 0$, in the class $[\omega_X] + k[\omega_B]$.
\end{theorem}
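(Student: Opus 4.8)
\emph{Proof proposal.} The plan is to run the same adiabatic-limit scheme as in Theorem \ref{Thm:existence_cscK_metrics}, but now working equivariantly with respect to a maximal torus of automorphisms and solving the \emph{extremal} operator \eqref{Eq:extremal_operator} instead of the constant scalar curvature equation. Let $T$ be the real torus generated by the Hamiltonian flow of the fibrewise $J_0$-holomorphy potential $h_1$ of \eqref{Eq:extremal_holomorphy_potential} -- which is a genuine holomorphy potential on $X$ (and on $Y$) thanks to the extremal symplectic connection condition of Definition \ref{Def:extremal_OSC} and the equivariance hypothesis $\mrm{Aut}(\pi_Y)\acts(\m{X},\m{H})$ -- together with the lift of the torus generated on $B$ by the twisted extremal potential $b_1 = \mrm{Scal}(\omega_B)-\Lambda_{\omega_B}\alpha_{WP}$, which exists because all automorphisms of $q$ lift to $Y$. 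Since $\omega_B$, $\omega_X$ are $T$-invariant and $\omega_X$ is invariant under the flow of the extremal vector fields, $\omega_k=\omega_X+k\omega_B$ is $T$-invariant, and all function spaces below are replaced by their $T$-invariant parts $C^\infty(X)^T = C^\infty(B)^T\oplus C^\infty(E)^T\oplus C^\infty(R)^T$. The scalar curvature expansion of the previous subsection combined with \eqref{Eq:mu_pi_expansion} and the equation $\widehat{\m{L}}h_1=0$ identifies the candidate extremal holomorphy potential on $(X_s,\omega_k)$ as $\widehat{S}_b + k^{-1}(b_1+h_1) + (\text{lower-order corrections})$, so the extremal vector field is forced a priori to leading order.

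First I would upgrade Proposition \ref{Prop:approximate_solutions} to the extremal setting: at each order in $k^{-1/2}$ one solves ``$\mrm{Scal}(\omega_{k,r},J_k) - f_{k,r} \in \bigoplus_{j}k^{j/2}\,(\text{holomorphy potentials})$'' rather than ``$=$ constant'', projecting the error onto the splitting \eqref{Eq:splitting_function_space} and killing, respectively, the $C^\infty(E)^T$-component by $\widehat{\m{L}}$ modulo $\mrm{ker}\widehat{\m{L}}$ (Theorem \ref{Thm:linearisation_genOSC}, Proposition \ref{Prop:kernel_linearisation}), the $C^\infty(R)^T$-component by the vertical Lichnerowicz operator $\m{D}_{\m{V}}^*\m{D}_{\m{V}}$ (an isomorphism on $C^\infty(R)$), and the $C^\infty(B)^T$-component by the twisted linearisation $\m{L}_{\alpha_{WP}}$ of \eqref{Eq:linearisation_twisted_eq} modulo the holomorphy potentials of $\mrm{Aut}(q)$. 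The part of the residual lying in the relevant kernel at each stage is simply absorbed into an update of the candidate holomorphy potential $f_{k,r}$; this is permissible precisely because we solve \eqref{Eq:extremal_operator} and not the cscK equation. The essential structural input is that the kernel of the global Lichnerowicz operator $\m{D}_k^*\m{D}_k$ of $\omega_k$ on $X_s$ decomposes, for $k\gg 0$, as the lift of $\mrm{Lie}(\mrm{Aut}(q))$ to $X_s$ plus a fibrewise piece equal to $\mrm{ker}\widehat{\m{L}}$; the first hypothesis gives $\mrm{ker}\widehat{\m{L}}=\mrm{Lie}(\mrm{Aut}(\pi_Y))$ by Remark \ref{Rmk:Aut_intersection}, the second lets one lift base holomorphy potentials, and one checks (as in the corresponding step of \cite{DervanSektnan_ExtremalFibrations}, and of \cite{Bronnle_extremal_projvb} for projective bundles) that these exhaust the kernel.

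Then I would apply the quantitative implicit function theorem to the $T$-invariant extremal operator at the approximate solution $\omega_{k,r}$, for which $\mrm{Scal}(\omega_{k,r},J_k)-f_{k,r}=O(k^{-(r+1)/2})$. The required ingredient is a right inverse $Q_k$ of the linearisation (the Lichnerowicz operator of $\omega_{k,r}$, restricted to the $L^2$-orthogonal complement of the $T$-invariant holomorphy potentials) with operator norm $\le C k^{N}$ for a fixed $N$, uniform in $k$ and in $s$. This is produced by the by-now standard separation-of-scales estimate: on $C^\infty(R)^T$ the operator is, to leading order, the uniformly invertible $\m{D}_{\m{V}}^*\m{D}_{\m{V}}$; on $C^\infty(E)^T$ it is controlled at order $k^{-1}$ by $\widehat{\m{L}}$, invertible on the complement of its kernel; on $C^\infty(B)^T$ it is controlled at order $k^{-2}$ by $\m{L}_{\alpha_{WP}}$, invertible on the complement of the lifted $\mrm{Aut}(q)$-potentials; a Neumann-series/bootstrap across these three scales gives the bound exactly as in \cite{Fine_cscK_fibrations,DervanSektnan_OSC1}. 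Choosing $r$ large compared to $N$, the nonlinear remainder is dominated and the implicit function theorem furnishes $\varphi_k\in C^\infty(X)^T$ with $\omega_{k,r}+i\del\delbar\varphi_k$ a genuine zero of the extremal operator, hence an extremal metric on $X_s$ in $c_1(H_Y)+kc_1(L)=[\omega_X]+k[\omega_B]$ for all $k\gg 0$.

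The main obstacle I expect is the uniform-in-$(k,s)$ invertibility estimate for $\m{D}_k^*\m{D}_k$ transverse to its kernel, together with the fact that one must simultaneously control the kernel itself along the adiabatic limit \emph{and} along the deformation $J_s$, i.e.\ show it does not jump; this is exactly where the rigidity built into the lifted $\bb{C}^*$-action, the equivariance of $\mrm{Aut}(\pi_Y)\acts(\m{X},\m{H})$, and the lifting of $\mrm{Aut}(q)$ to $Y$ are used, in combination with Proposition \ref{Prop:kernel_linearisation}. A secondary point, easily dispatched, is verifying that the candidate vector field is the extremal vector field of the solution and not merely some holomorphic vector field: this follows from the Futaki--Mabuchi uniqueness of the extremal vector field in the fixed class $[\omega_X]+k[\omega_B]$ (for each fixed $k$) together with the $T$-equivariance of the whole construction.
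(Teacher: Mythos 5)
Your proposal is correct and follows essentially the same approach as the paper: a $T$-invariant adiabatic-limit construction of approximate solutions (Proposition \ref{Prop:approx_solutions_aut}) followed by the quantitative implicit function theorem with the polynomial bound $\norm{P_{k,r}}\le Ck^{5/2}$ on the right inverse and the Lipschitz estimate for the nonlinear remainder, concluding by taking $r>5$. The only additions you make beyond the paper's argument are the explicit invocation of Futaki--Mabuchi uniqueness to identify the extremal vector field and the ``separation-of-scales'' description of the inverse bound, both of which are consistent with (and implicit in) the paper's cited references.
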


\begin{remark}\label{Rmk:aut(q)lift}
Let $\widehat{g}$ be a lift of an automorphism of $q$ to $(Y, H_Y)$. We claim that $\widehat{g}$ lies in $\mrm{Aut}(X, H)$. Indeed, denoting by $J$ the complex structure of $Y$ and $J_0$ the complex structure of $X$, we have
\begin{equation*}
\dd \widehat{g} \circ J = J \circ \widehat{g}.
\end{equation*}
But since $\widehat{g}$ is an automorphism in the base direction, it is equivalent to say that
\begin{equation*}
\dd \widehat{g} \circ J_{\m{H}} = J_{\m{H}} \circ \widehat{g},
\end{equation*}
where $J_{\m{H}}$ is the horizontal part of $J$. Now, $J_{\m{H}} = (J_0)_{\m{H}}$, since the deformation of the complex structure which we are considering is only in the vertical direction. Thus $\widehat{g}$ is a lift of an automorphism of $B$ to $X$ as well.
\end{remark}

\begin{definition}
We denote the group of automorphisms of $(Y, H_Y)$ which are also automorphisms of $(X, H)$ as $\mrm{Aut}(Y/X, H_Y)$.
\end{definition}
In light of this definition we have the inclusion $\Aut(\pi_Y) \subseteq \Aut(Y/X, H_Y)$ and, if $\widehat{\Aut}(q)$ is a lift of $\Aut(q)$ to (Y), then $\widehat{\Aut}(q) \subseteq \Aut(Y/X, H_Y)$.
Thus we can recover the following result from \cite[Proposition 3.14]{DervanSektnan_OSC1}.

\begin{lemma}\label{Lemma:ses}
Suppose that all automorphisms of $q$ lift to $Y$. Then there is a short exact sequence
\begin{equation*}
0 \to \mrm{Lie}(\mrm{Aut}(\pi_Y)) \to \mrm{Lie}(\mrm{Aut}(Y, H_Y)) \to \mrm{Lie}(\mrm{Aut}(q)) \to 0.
\end{equation*}
\end{lemma}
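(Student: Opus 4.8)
The plan is to exhibit the sequence as the one coming from the homomorphism that sends an automorphism of $Y$ to the automorphism of $B$ it induces, and then to verify exactness at each of the three terms; the delicate points are the surjectivity onto $\mrm{Lie}(\mrm{Aut}(q))$ and the fact that the induced base automorphisms actually preserve $q$.

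First I would check that every holomorphic vector field $W$ on $Y$ is projectable. The differential $\dd\pi_Y(W)$ is a holomorphic section of $\pi_Y^{*}T_B$, and since the fibres of $\pi_Y$ are compact and connected one has $\pi_{Y*}\m{O}_Y = \m{O}_B$, so by the projection formula $H^0(Y,\pi_Y^{*}T_B)\cong H^0(B,T_B)$; thus $r(W):=(\pi_Y)_{*}W$ is a well-defined holomorphic vector field on $B$, and $W\mapsto r(W)$ is a Lie algebra homomorphism (pushforward of $\pi_Y$-related fields). Equivalently, every element of $\mrm{Aut}_0(Y,H_Y)$ is a finite product of time-one flows of such fields, hence descends to an element of $\mrm{Aut}_0(B)$, and $r$ is the differential of this descent homomorphism. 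Restricting $r$ to $\mrm{Lie}(\mrm{Aut}(Y,H_Y))$, its kernel consists of the vertical fields therein, which is exactly $\mrm{Lie}(\mrm{Aut}(\pi_Y))$ by Definition \ref{Def:aut(pi)}; since $\mrm{Aut}(\pi_Y)$ is a closed subgroup of $\mrm{Aut}(Y,H_Y)$, this already gives exactness at the first two terms.

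It remains to see that the image of $r$ is precisely $\mrm{Lie}(\mrm{Aut}(q))$. For the inclusion $\subseteq$, take $W\in\mrm{Lie}(\mrm{Aut}(Y,H_Y))$ with flow $\phi_t\in\mrm{Aut}(Y,H_Y)$ and let $\bar\phi_t$ be the flow of $r(W)$; then $\pi_Y\circ\phi_t=\bar\phi_t\circ\pi_Y$, so $\phi_t$ restricts to a polarised biholomorphism $(Y_b,H_Y|_{Y_b})\xrightarrow{\sim}(Y_{\bar\phi_t(b)},H_Y|_{Y_{\bar\phi_t(b)}})$ of the K-semistable fibres. Since $q$ is defined by $q(b)=[X_b]\in\m{M}$, the class of the cscK degeneration of $Y_b$, and isomorphic K-semistable polarised manifolds have isomorphic cscK degenerations, we get $q(\bar\phi_t(b))=q(b)$ for all $b$ and $t$, i.e. $\bar\phi_t\in\mrm{Aut}(q)$; differentiating gives $r(W)\in\mrm{Lie}(\mrm{Aut}(q))$. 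For the reverse inclusion, the hypothesis that every automorphism of $q$ lifts to $(Y,H_Y)$ says precisely that the descent homomorphism $\mrm{Aut}_0(Y,H_Y)\to\mrm{Aut}_0(q)$ is surjective; a surjective homomorphism of Lie groups is a submersion, so $r$ is onto $\mrm{Lie}(\mrm{Aut}(q))$, as in \cite[Proposition 3.14]{DervanSektnan_OSC1}. Together with the previous paragraph this gives the short exact sequence.

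The main obstacle is the step showing the induced base automorphisms preserve $q$: an automorphism of the \emph{deformed} fibration $\pi_Y$ produces only an isomorphism of the K-semistable fibres $Y_b$, while the moduli map $q$ is defined through the \emph{central} fibrewise cscK fibration $\pi:X\to B$, so one must know that the cscK degeneration $X_b$ of $Y_b$ is determined, up to polarised isomorphism, by the isomorphism class of $Y_b$ alone. This is exactly where the argument uses the K-semistable rather than merely cscK setting, and it rests on the canonicity of the cscK degeneration in the moduli theory of \cite{Fujiki_Schumacher_Moduli_cscK,DervanNaumann_ModuliCscK} together with the compatibility of the chosen family $\m{X}\to B\times S$ with that theory. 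The remaining ingredients — projectability of holomorphic vector fields, and smoothly lifting one-parameter subgroups of $\mrm{Aut}(q)$ — are routine and parallel the cscK case treated in \cite[Proposition 3.14]{DervanSektnan_OSC1}.
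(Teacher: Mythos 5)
Your proof is essentially correct and fills in details the paper leaves to a citation, but it takes a genuinely different route. The paper simply asserts that Lemma~\ref{Lemma:ses} can be ``recovered'' from \cite[Proposition~3.14]{DervanSektnan_OSC1}, which proves the analogous short exact sequence for the relatively cscK fibration $(X,H)\to B$; the intended transfer to $(Y,H_Y)$ goes through the group $\Aut(Y/X,H_Y)$ and Remark~\ref{Rmk:aut(q)lift}, which exploits that the deformation $J_s$ differs from $J_0$ only vertically, so any automorphism of $Y$ covering a base automorphism automatically preserves the horizontal part of $J_0$ and hence lies in $\Aut(X,H)$. Your argument instead verifies exactness for $Y$ directly: projectability of holomorphic vector fields via $\pi_{Y*}\m{O}_Y=\m{O}_B$, identification of the kernel with $\mrm{Lie}(\Aut(\pi_Y))$, and surjectivity from the lifting hypothesis — all routine and parallel to DS — and then, at the crucial step, you argue that the induced base automorphism preserves $q$ because isomorphic K-semistable fibres have isomorphic cscK degenerations. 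That step is where the two approaches diverge: the paper's route avoids invoking uniqueness of cscK degenerations by working with the fixed family $\m{X}\to B\times S$ and the verticality of the deformation, whereas your route relies on the canonicity of the polystable limit. That canonicity is substantively correct, but your attribution to \cite{Fujiki_Schumacher_Moduli_cscK,DervanNaumann_ModuliCscK} is slightly off: those references establish the moduli space $\m{M}$ itself; the uniqueness of the cscK degeneration of a nearby K-semistable structure is more directly a consequence of Sz\'ekelyhidi's slice theorem (Theorem~\ref{Thm:Kuranishi}) and the finite-dimensional moment map/GIT picture on the Kuranishi space, or equivalently of the separatedness of $\m{M}$. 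Flagging the correct source for that uniqueness would make your proof fully watertight; otherwise the two arguments buy the same conclusion, with the paper's leaning on the structure of the degeneration and yours leaning on abstract moduli-theoretic uniqueness.
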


\begin{remark}\label{Rmk:torus}
Let us denote by $\xi_E$ the holomorphic vector field on $Y$ which arises from the extremal symplectic connection condition:
\begin{equation*}
\xi_E = J_s\nabla_{\m{V}}\left(p_E (\Theta(\omega_X, J_0)) + \frac{\lambda}{2}\nu\right),
\end{equation*}
and $\xi_q$ the holomorphic vector field on $B$ which arises from the twisted extremal condition:
\begin{equation*}
\xi_q = J_B\nabla_B (\mrm{Scal}(\omega_B) - \Lambda_{\omega_X}\alpha_{WP}).
\end{equation*}
By our assumptions, $\xi_E$ is a holomorphy potential on $X$, and $\xi_q$ lifts to a holomorphic vector field on $Y$ (and on $X$). Nonetheless, the holomorphy potential of $\xi_q$ on $Y$ is a function $\widetilde{b}_1$ such that
\begin{equation*}
\widetilde{b}_1 = k\pi^*b_1 + O(1).
\end{equation*}
Again from Remark \ref{Rmk:aut(q)lift}, $\widetilde{b}_1$ is holomorphic potential for a lift of $\xi_q$ also on $X$.
As in \cite{DervanSektnan_OSC1}, we need to assume the following invariance properties: $\omega_X$ is invariant under the flow of $\xi_E$ and of the pull-back of $\xi_q$. In order to make this assumptions reasonable to work with, we consider a maximal torus $T_E$ in $\mrm{Aut}(\pi_Y)$ which contains the flow of $\xi_E$, and a maximal torus $T_q$ in $\mrm{Aut}(B, L)$ which contains the flow of $\xi_q$. The pull back $\widehat{T_q}$ lies in $\Aut(Y/X, H_Y)$. Then we fix a maximal torus $T$ in $\mrm{Aut}(Y, H_Y)$ which contains $T_E$ and $T_q$, and we require that $\omega_X$ is invariant with respect to $T$.
From Lemma \ref{Lemma:ses}, we obtain a splitting $\mrm{Lie}(T) = \mrm{Lie}(T_E)+ \mrm{Lie}(T_q)$, so indeed we have $T \subset \Aut(Y/X, H_Y)$ as well.

Moreover, an analogous splitting holds also for the complexification $T^{\bb{C}}$, so we can write every vector field $\xi \in \mrm{Lie}(T^{\bb{C}})$ as $\xi_E + \xi_q$. If $h_E$ is the holomorphy potential of $\xi_E$ with respect to $\omega_X$ and $h_B$ is the holomorphy potential of $\xi_q$ on the base with respect to $\omega_B$, then $h_E + k\pi_Y^*h_B$ is a holomorphy potential of $\xi$ on $Y$ (and on $X$).
\end{remark}

Define the \emph{extremal symplectic connection operator}
\begin{equation*}
\m{P}: C^\infty(Y, \bb{R})\times C^\infty(E) \to C^\infty(Y, \bb{R})
\end{equation*}
by
\begin{equation*}
\m{P}(\varphi, h_1) = p_E\left(\Theta(\omega_X + i\del\delbar\varphi, J_s)\right) + \frac{\lambda}{2}\nu_\varphi -h_1-\frac{1}{2} \langle \nabla h_1,\nabla\varphi\rangle_{\omega_F}.
\end{equation*}
The linearisation at $(h_1, 0)$ applied to $(h_1, \psi)$ is obtained, as for the extremal operator described in \eqref{Eq:extremal_operator}, as follows:
\begin{equation*}
\widehat{\m{L}}(\psi)-h_1 -\frac{1}{2}\langle \nabla h_1, \nabla \psi \rangle_{\omega_F},
\end{equation*}
where $\widehat{\m{L}}$ is the real operator of the linearisation of the optimal symplectic connection equation described in Lemma \ref{Lemma:linearisation} and the map sending $\varphi$ to $\langle \nabla h_1, \nabla \varphi \rangle_{\omega_F}$ is linear.
We can write
\begin{equation*}
\langle \nabla h_1, \nabla \psi \rangle_{\omega_F} = \frac{1}{2} \nabla h_1 (\psi) + \frac{1}{2} i J \nabla h_1(\psi),
\end{equation*}
so if we assume that $\psi$ is invariant under the torus $T$, the second term vanishes and linearisation is a real operator.

With all of these assumptions in place, we can obtain approximate solutions to the extremal equation much as in Section \ref{Subsec:approx_solution_discrete_autom}.
\begin{proposition}\label{Prop:approx_solutions_aut}
Let $(\m{X}, \m{H})\to B\times S$ be a degeneration of a smooth fibration $\pi_Y:(Y, H_Y) \to B$ to a smooth relatively cscK fibration $\pi:(X, H) \to B$, equivariant with respect to $\mrm{Aut}(\pi_Y)$. Let $\omega_X$ be an extremal symplectic connection on $X_s$, invariant under the torus $T$ described in Remark \ref{Rmk:torus}. Let $\omega_B$ a twisted extremal metric on the base, and assume that all automorphisms of $q$ lift to $Y$. Then for each $r >1$ there exist functions
\begin{equation*}
f_{B,1}, \dots, f_{B,r} \in C^\infty(B)^T, \qquad f_{E,1}, \dots, f_{E,r} \in C^\infty(E)^T, \qquad f_{R,1}, \dots, f_{R,r} \in C^\infty(R)^T,
\end{equation*}
base holomorphy potentials
\begin{equation*}
b_1, \dots, b_r \in C^\infty(B)^T,
\end{equation*}
fibre holomorphy potentials
\begin{equation*}
h_1, \dots, h_r \in C^\infty(E)^T
\end{equation*}
and a constant $c$ such that, letting
\begin{equation*}
h^B_{k,r} = \sum_{j=2}^r \frac{f_{B,j}}{k^{j-2}}, \qquad h^E_{k,r} = \sum_{j=2}^r \frac{f_{E,j}}{k^{(j-1)/2}}, \qquad h^R_{k,r} = \sum_{j=2}^r \frac{f_{R,j}}{k^{j/2}}
\end{equation*}
and
\begin{equation*}
\eta_{k, r} = c + \sum_{j=1}^r \left(\widetilde{b}_{j}k^{(-j-1)/2}+ h_j k^{-j/2}\right),
\end{equation*}
the K\"ahler metric
\begin{equation*}
\omega_{k,r} = \omega_k + i\del\delbar\left(h^B_{k,r} + h^E_{k,r} + h^R_{k,r} \right)
\end{equation*}
satisfies
\begin{equation*}
\mrm{Scal}\left(\omega_{k, r}, J_k\right) = \eta_{k,r} + \frac{1}{2} \langle\nabla\eta_{k,r}, \nabla\left(h^B_{k,r} + h^E_{k,r} + h^R_{k,r}\right) \rangle_{\omega_k}+ O\left(k^{-(r+1)/2}\right).
\end{equation*}
\end{proposition}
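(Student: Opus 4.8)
The plan is to run the same inductive adiabatic-limit construction as in Proposition \ref{Prop:approximate_solutions}, with the single modification that at each order we no longer require the scalar curvature to be constant but only to agree with a holomorphy potential, precisely as the cscK operator \eqref{Eq:extremal_operator} is replaced by the extremal operator. The extremal symplectic connection hypothesis and the twisted extremal hypothesis on $\omega_B$ guarantee that the $C^\infty(E)$- and $C^\infty(B)$-components of the $k^{-1}$-coefficient in the expansion of $\mrm{Scal}(\omega_k, J_s)$ (with $s^2 = \lambda k^{-1}$) established earlier in this section are, respectively, the fibrewise holomorphy potential $h_1$ on $Y$ and the pull-back $\pi^* b_1$ of the twisted extremal potential $b_1 \in \ker \m{D}_B$; the remaining part of that coefficient lies in $C^\infty(R)$. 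Because the $\bb{C}^*$-degeneration is $\mrm{Aut}(\pi_Y)$-equivariant and $\omega_X$, $\omega_B$ are $T$-invariant, every quantity in the expansion is $T$-invariant, so all functions to be constructed may be sought inside the $T$-invariant subspaces, which is exactly what makes the linearised operators below genuine self-adjoint real operators, as observed after the definition of $\m{P}$.

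First I would record the linearisation of $\mrm{Scal}(\omega_{k,r}, J_k)$: the proof of Lemma \ref{Lemma:linearisation_scal_kn} applies verbatim, since it only involves $J_0$-quantities and the vertical deformation, giving $-\m{D}_{\m{V}}^*\m{D}_{\m{V}} + k^{-1}D_1 + k^{-3/2}D_{3/2} + k^{-2}D_2 + O(k^{-5/2})$ with $p_E\circ D_1 = -p_E\circ\widehat{\m{L}}$ on $C^\infty(E)$ and $\int_{X/B} D_2(f)\,\omega_X^m\wedge\omega_B^n = -\m{L}_\alpha(f)$ on $C^\infty(B)$. For the extremal problem the operators controlling the two nontrivial steps are $\psi \mapsto \widehat{\m{L}}(\psi) - \tfrac12\langle\nabla h_1, \nabla\psi\rangle_{\omega_F}$ on $C^\infty(E)^T$ and $f \mapsto \m{L}_\alpha(f) - \tfrac12\langle\nabla b_1, \nabla f\rangle_{\omega_B}$ on $C^\infty(B)^T$; both are elliptic (by Theorem \ref{Thm:linearisation_genOSC} and \eqref{Eq:linearisation_twisted_eq}) and, on $T$-invariant functions, self-adjoint, and by Proposition \ref{Prop:kernel_linearisation}, Remark \ref{Rmk:Aut_intersection}, the hypothesis $\ker\widehat{\m{L}} = \mrm{Lie}(\mrm{Aut}(\pi_Y))$, and \cite[Proposition 3.5]{DervanSektnan_ExtremalFibrations}, their kernels and cokernels are the spaces of holomorphy potentials (on $Y$, resp.\ for $\mrm{Aut}(q)$). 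Hence each is surjective onto its target once one is allowed to add an element of the kernel, i.e.\ a holomorphy potential.

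Then I would carry out the induction as in Proposition \ref{Prop:approximate_solutions}, respecting the same order of operations $C^\infty(R) \to C^\infty(E) \to C^\infty(B)$: at step $n=j/2$ one first kills the $C^\infty(R)$-error with a potential $k^{-j/2}f_{R,j}$ via the fibrewise invertibility of $\m{D}_{\m{V}}^*\m{D}_{\m{V}}$ on $C^\infty(R)^T$; then kills the $C^\infty(E)$-error with a potential $k^{-(j-1)/2}f_{E,j}$, choosing a fibre holomorphy potential $h_j$ so that the residue lies in the image of $\widehat{\m{L}}$ and then inverting; and at integer orders kills the $C^\infty(B)$-error with a potential $k^{j-2}f_{B,j}$ together with a base holomorphy potential $b_j$, using invertibility of the twisted extremal linearisation modulo $\mrm{Aut}(q)$ and the fact (Lemma \ref{Lemma:linearisation_scal_kn}(3)) that the base potential feeds into the $k^{-2}$-coefficient. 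The cross terms $\tfrac12\langle\nabla\eta_{k,r}, \nabla(h^B_{k,r}+h^E_{k,r}+h^R_{k,r})\rangle_{\omega_k}$ in the conclusion are not arranged separately: they are produced automatically, being the change of the holomorphy potential under $\omega \mapsto \omega + i\del\delbar\varphi$ exactly as in \eqref{Eq:extremal_operator}, and since $h^E_{k,r}, h^R_{k,r}$ carry strictly negative powers of $k$ and $h^B_{k,r}$ only affects the base at order $k^{-1}$, these terms are always of strictly lower order than the one being corrected, so the induction closes.

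The main obstacle I expect is the simultaneous bookkeeping of the three function-space components together with the two families of holomorphy potentials $h_j \in C^\infty(E)^T$ and $\widetilde b_j$, and in particular making sure $T$-invariance is genuinely preserved throughout: the solutions obtained by inverting the $T$-equivariant elliptic operators on $T$-invariant data can be chosen $T$-invariant, and the extracted holomorphy potentials lie in $C^\infty(E)^T$ and $C^\infty(B)^T$ because $T$ normalises the relevant automorphism groups. A second subtlety, which Remark \ref{Rmk:torus} is designed to handle, is that on $Y$ the lift of the base extremal field $\xi_q$ has holomorphy potential $\widetilde b_1 = k\pi^* b_1 + O(1)$ rather than $\pi^* b_1$, so the alignment of the $\widetilde b_j$ with the powers of $k$ in $\eta_{k,r}$ must be set up with care; once this is fixed, each individual step is the same elliptic-inversion argument as in the discrete case.
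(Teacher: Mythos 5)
Your proposal is correct and follows essentially the approach the paper intends: the paper gives no separate proof of this proposition, stating only that it is obtained ``much as in'' the discrete case, and your elaboration (run the induction of Proposition~\ref{Prop:approximate_solutions}, allow holomorphy potentials in the cokernel at each step, work in $T$-invariant spaces, note the cross-terms arise automatically from the change of holomorphy potential, and use Remark~\ref{Rmk:torus} to align $\widetilde{b}_1 = k\pi^* b_1 + O(1)$ with the powers of $k$) is exactly that. One caution on phrasing: the paper's remark after Proposition~\ref{Prop:approximate_solutions} is explicit that at each fixed power of $k$ the corrections must be made in the order $C^\infty(B) \to C^\infty(E) \to C^\infty(R)$ (since $D_2(f_B)$ feeds forward into $E$ and $R$, and $D_1(f_E)$ into $R$, but not conversely), whereas you write the opposite list $R \to E \to B$; your scheme is nonetheless consistent because the potentials you index at a single step $j$ act at three different powers of $k$, so the resulting temporal order at any fixed power is still $B$ (introduced at step $m$), then $E$ (step $2m-1$), then $R$ (step $2m$) -- but it would be clearer to state this, and the power in ``$k^{j-2}f_{B,j}$'' should of course be $k^{-(j-2)}$.
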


\subsection{Solution to the non-linear equation}

In order to have genuine solutions, we perturb $\omega_{k,r}$ to a genuine extremal metric by using a quantitative version of the implicit function theorem, as in \cite{Fine_cscK_fibrations, Bronnle_extremal_projvb, DervanSektnan_ExtremalFibrations, DervanSektnan_OSC1}. In particular, all the cited works rely on Fine's paper \cite{Fine_cscK_fibrations}, though the difference with Fine's setting is that we are considering the base and the total space to have automorphisms, so the linearised operators will have a non-trivial kernel to deal with.

\begin{theorem}\label{Thm:quantitative_implicit_function_theorem}\cite[Theorem 25]{Bronnle_extremal_projvb}
Let $F:B_1 \to B_2$ be a differentiable map of Banach spaces such that $D_0F$ is surjective with right-inverse $P$. Let
\begin{enumerate}[label=(\roman*)] 
\item $\delta'>0$ be such that the non-linear operator $(F-D_0F)$ is Lipschitz in $B_{\delta'}(0)$ with constant $\frac{1}{2\norm{P}}$, i.e. for $x_1, x_2 \in B_{\delta'}(0) \subseteq B_1$, we have
\begin{equation*}
\norm*{\left(F-D_0F\right)(x_1)-(F-D_0F)(x_2)}_{B_2} \le \frac{1}{2\norm{P}} \norm*{x_1-x_2}_{B_1};
\end{equation*}
\item $\delta = \frac{\delta'}{2\norm{P}}$.
\end{enumerate}
Then for all $ y \in B_2$ such that $\norm{y-F(0)}<\delta$, there exists $x \in B_1$ such that $F(x)=y$.
\end{theorem}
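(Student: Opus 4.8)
The plan is to reduce the statement to the Banach fixed-point theorem, exactly as in the classical proof of the inverse function theorem but keeping track of the constants explicitly. Write the nonlinear part of $F$ as $\mathcal{N} := F - D_0F$; since $F$ is differentiable and $D_0F$ is a bounded linear operator, $\mathcal{N}$ is continuous, and by hypothesis it is Lipschitz with constant $\tfrac{1}{2\|P\|}$ on $B_{\delta'}(0)$. Fixing $y$ with $\|y-F(0)\|_{B_2}<\delta$, I would introduce
\begin{equation*}
T : B_{\delta'}(0) \to B_1, \qquad T(x) = P\bigl(y - \mathcal{N}(x)\bigr).
\end{equation*}
The reason for this choice is that $D_0F \circ P = \mathrm{id}_{B_2}$, so any fixed point $x = T(x)$ satisfies $D_0F(x) = y - \mathcal{N}(x)$, whence $F(x) = D_0F(x) + \mathcal{N}(x) = y$. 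Thus it is enough to produce a fixed point of $T$ lying in $B_{\delta'}(0)$.

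First I would establish the contraction estimate: for $x_1, x_2 \in B_{\delta'}(0)$ one has $T(x_1) - T(x_2) = P\bigl(\mathcal{N}(x_2) - \mathcal{N}(x_1)\bigr)$, so
\begin{equation*}
\|T(x_1) - T(x_2)\|_{B_1} \le \|P\|\,\|\mathcal{N}(x_1) - \mathcal{N}(x_2)\|_{B_2} \le \|P\| \cdot \tfrac{1}{2\|P\|}\,\|x_1 - x_2\|_{B_1} = \tfrac12 \|x_1 - x_2\|_{B_1}.
\end{equation*}
Next, since $D_0F$ is linear, $\mathcal{N}(0) = F(0)$, hence $T(0) = P(y - F(0))$ and $\|T(0)\|_{B_1} \le \|P\|\,\|y - F(0)\|_{B_2} < \|P\|\,\delta = \tfrac{\delta'}{2}$ by the choice $\delta = \delta'/(2\|P\|)$. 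Setting $x_0 = 0$ and $x_{n+1} = T(x_n)$, an induction using the contraction bound against $x_0$ gives $\|x_{n+1}\| \le \tfrac12\|x_n\| + \|T(0)\| < \delta'$, so the iterates stay in $B_{\delta'}(0)$; moreover $\|x_{n+1} - x_n\| \le 2^{-n}\|x_1 - x_0\|$, so $(x_n)$ is Cauchy and converges to some $x_\infty \in B_1$ with $\|x_\infty\| \le \sum_{n\ge0}\|x_{n+1}-x_n\| \le 2\|T(0)\| < \delta'$. Because $x_\infty$ lies strictly inside $B_{\delta'}(0)$, where $T$ is continuous, passing to the limit in $x_{n+1} = T(x_n)$ yields $x_\infty = T(x_\infty)$, and by the observation above $F(x_\infty) = y$.

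I do not expect a genuine obstacle: the analytic content has been front-loaded into the hypotheses (the explicit Lipschitz constant and the resulting radius $\delta$), and what remains is careful bookkeeping. The only points deserving attention are (i) checking that the limit point is \emph{strictly} interior to $B_{\delta'}(0)$ rather than merely on its boundary, so that the Lipschitz/continuity hypothesis may legitimately be invoked at $x_\infty$, which is why the strict inequalities above are tracked; and (ii) making explicit at the outset that $D_0F$ denotes the Fréchet derivative, a bounded operator, so that $\mathcal{N}$ is continuous and the fixed-point scheme is well posed. Equivalently, one could apply the contraction mapping theorem directly on the complete metric space $\overline{B_{\delta'/2}(0)}$ once $\|T(0)\|$ and the Lipschitz constant are in hand; the estimates are the same.
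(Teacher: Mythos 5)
The paper does not prove Theorem~\ref{Thm:quantitative_implicit_function_theorem}; it simply cites \cite[Theorem 25]{Bronnle_extremal_projvb}, so there is no in-text proof to compare against. Your argument is correct and is the standard Banach fixed-point proof that the cited source (and essentially every quantitative implicit-function statement of this type) uses: defining $T(x)=P(y-\mathcal{N}(x))$, checking that $T$ is a $\tfrac12$-contraction on $B_{\delta'}(0)$ because $\|P\|$ cancels the Lipschitz constant $1/(2\|P\|)$, bounding $\|T(0)\|=\|P(y-F(0))\|<\delta'/2$, and iterating from $0$ so the orbit stays in $B_{\delta'}(0)$ with a limit strictly interior to the ball. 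The two points you flag at the end are exactly the right ones to keep in view, and your bookkeeping (the geometric-series bound $\|x_\infty\|\le 2\|T(0)\|<\delta'$) handles the interiority cleanly.
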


To apply the theorem to the extremal operator, one should bound both the right inverse of the linearisation and the non-linear operator. Denote by $L^2_{0,p}$ the Sobolev spaces of functions on $Y$ computed with respect to $\omega_{k,r}$, and remark that these do not depend on $k$, since the Sobolev norms are equivalent for different values of $k$ \cite[Remark 30]{Bronnle_extremal_projvb}.

Let $\f{t}$ be the Lie algebra of $T$, where $T$ is the torus of automorphisms described in Remark \ref{Rmk:torus}. Let $\overline{\f{t}}$ be the set of holomorphy potentials whose flow lies in $T$. We denote by $(L^2_{0,p})^T$ the space of $T$-invariant functions in $L^2_{0,p}$.

For each $k,r$ denote by $\gamma_{k,r}$ the K\"ahler potential defined in Proposition \ref{Prop:approx_solutions_aut}, such that the approximately extremal metric $\omega_{k,r}$ is given by $\omega_k+ i\del\delbar\gamma_{k,r}$. For each $k,r$ we define the map
\begin{equation*}
\begin{aligned}
\tau_{k,r} : \f{t} &\to C^\infty(X, \bb{R}) \\
\xi &\mapsto k\pi_Y^*h_B + h_q + \frac{1}{2}\langle \nabla\gamma_{k,r}, \nabla (k\pi_Y^*h_B+h_q) \rangle_{\omega_k},
\end{aligned}
\end{equation*}
where $h_B$ and $h_q$ are the holomorphy potentials defined in Remark \ref{Rmk:torus}. The map $\tau_{k,r}$ associates to a $T$-invariant holomorphic vector field the correspondent holomorphy potential with respect to $\omega_{k,r}$

We apply the theorem to the operators
\begin{equation*}
\begin{aligned}
&F_{k,r} : (L^2_{0,p+4})^T \times \overline{\f{t}}\to (L^2_{0,p})^T \\
&F_{k,r}(\varphi, h) = \mrm{Scal}(\omega_{k,r} + i \del\delbar\varphi)-\frac{1}{2}\langle \nabla\eta_{k,r}, \nabla \gamma_{k,r} \rangle -\eta_{k,r}- \frac{1}{2}\langle \nabla (\tau_{k,r}(h)), \nabla \varphi)\rangle - \tau_{k,r}(h),
\end{aligned}
\end{equation*}
where $\eta_{k,r}$ is the K\"ahler potential which makes $\omega_{k,r}$ approximately extremal.
The linearisation of $F_{k,r}$ is the operator
\begin{equation*}
\begin{aligned}
&G_{k,r} : (L^2_{0,p+4})^T\times \overline{\f{t}} \to (L^2_p)^T\\
&(\varphi, h) \mapsto -\m{D}^*_{k,r}\m{D}(\varphi) + \frac{1}{2}\langle \nabla(\mrm{Scal}(\omega_{k,r})-\tau_{k,r}(h)), \nabla \varphi\rangle-\tau_{k,r}(h).
\end{aligned}
\end{equation*}
The proof requires two steps: the first one is to ensure that the linearisation is an isomorphism with bounded inverse $P_{k,r}$. Theorem \ref{Thm:quantitative_implicit_function_theorem} then gives $\delta_k$ such that if $\norm{F_{k,r}(0)}<\delta_k$, a zero of $F_{k,r}$ exists.
Since we want to find a zero for all $k$, the second step is to find a value of $r$ for which the norm $\norm{F_{k,r}(0)}$ converges to zero quicker than $\delta_k$.
The first step is contained in the following lemma \cite[Lemma 6.6]{DervanSektnan_ExtremalFibrations}, based on \cite[Lemmas 6.5,6.6,6.7]{Fine_cscK_fibrations}.

\begin{lemma}
There exists a constant $C$ independent of $k$ such that $G_{k,r}$ has a right inverse $P_{k,r}$ such that
\begin{equation*}
\norm*{P_{k,r}} \le C k^{5/2}.
\end{equation*}
\end{lemma}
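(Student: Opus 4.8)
The plan is to work with the $L^2$-orthogonal decomposition $C^\infty(X,\bb{R}) = C^\infty(B)\oplus C^\infty(E)\oplus C^\infty(R)$ of \eqref{Eq:splitting_function_space}, intersected with the $T$-invariant functions, together with the splitting $\overline{\f{t}} = \overline{\f{t}}_E\oplus\overline{\f{t}}_q$ coming from Lemma~\ref{Lemma:ses} and Remark~\ref{Rmk:torus}, and to analyse $G_{k,r}$ block by block, exhibiting at each order in $k$ the model operator controlling the corresponding block. First I would record that the fourth-order part of $G_{k,r}$ is the global Lichnerowicz operator $\m{D}^*_{k,r}\m{D}$ with respect to $\omega_{k,r}$, which admits an expansion $\m{D}^*_{k,r}\m{D} = \m{D}_{\m{V}}^*\m{D}_{\m{V}} + k^{-1/2}(\cdots) + k^{-1}(\cdots) + \cdots$ in powers of $k^{-1/2}$, entirely analogous to that of Lemma~\ref{Lemma:linearisation_scal_kn}; and that by \cite[Remark 30]{Bronnle_extremal_projvb} the Sobolev norms $L^2_{0,p}$ attached to the various $\omega_{k,r}$ are uniformly equivalent, so all elliptic estimates below may be taken with $k$-independent constants.

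Next I would invert the blocks in the order $R$, then $E$, then $B$. On $C^\infty(R)^T$ the leading operator is the fibrewise elliptic isomorphism $\m{D}_{\m{V}}^*\m{D}_{\m{V}}$, so this block is invertible with a $k$-independent bounded inverse. On $\ker\m{D}_{\m{V}}^*\m{D}_{\m{V}} = C^\infty(B)^T\oplus C^\infty(E)^T$ one passes to the next relevant order: on $C^\infty(E)^T$ the term of order $k^{-1}$ is $-\widehat{\m{L}}$, which by Theorem~\ref{Thm:linearisation_genOSC} has kernel $\mrm{Lie}(\mrm{Aut}(\pi_Y))$ under the standing hypotheses (Remark~\ref{Rmk:torus}); adjoining the correction $-\tau_{k,r}$ restricted to $\overline{\f{t}}_E$ turns it into an isomorphism $C^\infty(E)^T\times\overline{\f{t}}_E\to C^\infty(E)^T$ whose inverse has norm $O(k)$. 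On $C^\infty(B)^T$ the half-integer order terms $D_{j/2}$ together with $D_1$ vanish by Lemma~\ref{Lemma:linearisation_scal_kn}(2),(3), so the base block first appears at order $k^{-2}$, where fibre integration yields $-\m{L}_\alpha$ with $\alpha = \alpha_{WP}$; by Definition~\ref{Def:twisted_cscK_extremal} and \cite[Proposition 3.5]{DervanSektnan_ExtremalFibrations}, together with the $\overline{\f{t}}_q$-correction coming from $\tau_{k,r}(\xi_q) = k\pi_Y^*h_B + O(1)$ (Remark~\ref{Rmk:torus}), this is an isomorphism $C^\infty(B)^T\times\overline{\f{t}}_q\to C^\infty(B)^T$ whose inverse has norm $O(k^2)$. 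The extra factor $k^{1/2}$, which upgrades the naive exponent $2$ to $5/2$, is produced by the $k$-scaling of $\tau_{k,r}$ in the base direction combined with the half-integer coupling terms between the three blocks.

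I would then assemble $P_{k,r}$ by a Schur-complement argument: eliminate the $R$-block first, which perturbs the $E$- and $B$-blocks only at strictly lower order in $k$; then eliminate the $E$-block, which perturbs the $B$-block only at strictly lower order; then invert the resulting effective base operator, which equals $-\m{L}_\alpha$ plus lower-order terms. The main obstacle is exactly the bookkeeping that makes this work: one must check that, after the rescalings which make the diagonal blocks comparable, every off-diagonal coupling is $O(k^{-\varepsilon})$ relative to the diagonal, so that the Neumann series defining the inverse converges uniformly in $k$; this is where the vanishing statements of Lemma~\ref{Lemma:linearisation_scal_kn}(2),(3) are essential, and it is the adaptation to the present three-block-plus-torus setting of \cite[Lemmas 6.5--6.7]{Fine_cscK_fibrations} and \cite[Lemma 6.6]{DervanSektnan_ExtremalFibrations}. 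Composing the three eliminations and tracking the powers of $k$ gives a right inverse with $\norm{P_{k,r}}\le C k^{5/2}$ for some $C$ independent of $k$, as claimed.
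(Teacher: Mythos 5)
The paper does not actually give a proof of this lemma: it simply cites \cite[Lemma~6.6]{DervanSektnan_ExtremalFibrations}, which in turn adapts \cite[Lemmas~6.5--6.7]{Fine_cscK_fibrations}, and the estimate is taken over verbatim. So rather than comparing your sketch to a proof in the paper, the right comparison is with the strategy in those references, and there your account has one genuine misattribution and one unverified step.

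The block picture you set up is sound in spirit: the fourth-order part of $G_{k,r}$ is $\m{D}^*_{k,r}\m{D}$, the model operators at orders $k^0$, $k^{-1}$, $k^{-2}$ on $C^\infty(R)$, $C^\infty(E)$, $C^\infty(B)$ are $\m{D}_{\m{V}}^*\m{D}_{\m{V}}$, $-\widehat{\m{L}}$ and $-\m{L}_{\alpha}$ respectively, and the dominant block is the base one, naively giving $O(k^2)$. But your explanation of the extra $k^{1/2}$ --- ``the $k$-scaling of $\tau_{k,r}$ in the base direction combined with the half-integer coupling terms'' --- is not where it comes from. The torus parameter $h\in\overline{\f{t}}$ ranges over a \emph{fixed finite-dimensional} space, and the fact that $\tau_{k,r}(\xi_q)\sim k\,\pi_Y^*h_B$ only rescales those finitely many directions; it does not degrade the inverse. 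Likewise, the half-integer coupling terms produced by the $J_s$-deformation contribute off-diagonal entries of strictly lower order and, if anything, must be shown to be harmless rather than credited with improving the exponent. The actual source of the $k^{1/2}$ in Fine's argument (and hence in the cited Dervan--Sektnan lemma) is the comparison between the $L^2(\omega_k)$ and $L^2_p(\omega_k)$ Sobolev norms as $k\to\infty$: the eigenvalue estimate $\lambda_1(\m{D}^*_k\m{D}_k)\ge Ck^{-2}$ on the orthogonal complement of the (torus-extended) kernel is obtained in $L^2$, and promoting it to a bound on the right inverse $L^2_p\to L^2_{p+4}$ costs exactly a factor $k^{1/2}$ coming from the $k$-dependence of the Sobolev constants for the degenerating family of metrics $\omega_{k,r}$. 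Your sketch never invokes this comparison, so the exponent $5/2$ is not actually derived.

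There is also a gap in the Schur-complement step. For the Neumann series to close after rescaling the diagonal blocks $A\sim k^{-1}$ (on $E$) and $D\sim k^{-2}$ (on $B$), the $E$--$B$ coupling must be $o(k^{-3/2})$, since the rescaled off-diagonal is $A^{-1/2}BD^{-1/2}\sim k^{3/2}\cdot\norm{B}$. You assert that Lemma~\ref{Lemma:linearisation_scal_kn}(2),(3) guarantees the couplings are ``$O(k^{-\varepsilon})$ relative to the diagonal,'' but (2),(3) only control the behaviour of the $D_{j/2}$ \emph{on} $C^\infty(B)$, not the cross-terms $C^\infty(E)\to C^\infty(B)$ and $C^\infty(R)\to C^\infty(B)$, so this is not established by the lemma as stated. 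In the reference the issue is sidestepped entirely: rather than inverting block-by-block, one proves a single $L^2$ spectral-gap bound for $\m{D}^*_k\m{D}_k$ on the complement of the kernel (after adjoining $\overline{\f{t}}$), then upgrades to Sobolev spaces. That route avoids having to track cross-couplings between $B$, $E$, $R$ at all, which is precisely what makes it robust. If you want your Schur-complement variant to go through, you need to state and verify the cross-block bounds explicitly, and you still need the Sobolev comparison to produce the $k^{1/2}$.
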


The second step relies on the following result \cite[Lemma 6.6]{DervanSektnan_ExtremalFibrations}, which is a consequence of the mean value theorem.
\begin{lemma}\label{Lemma:lipschitz}
Let $\m{N}_{k,r} = F_{k,r} - \dd_0F_{k,r}$ be the nonlinear part of the extremal operator. Then there are constant $c, C$ such that for all $r$ sufficiently large, if $f_i \in (L^2_{p+4})^T \times \overline{\f{t}}$ for $i=1,2$ satisfy $\norm{f_i}\le c$, then
\begin{equation*}
\norm*{\m{N}_{k,r}(f_1)-\m{N}_{k,r}(f_2)}_{L^2_{p}}\le C\left( \norm{f_1}_{L_{p+4}^2(\omega_{k,r})}+\norm{f_2}_{L^2_{p+4}(\omega_{k,r})}\right)\norm*{f_1-f_2}_{L^2_{p+4}(\omega_{k,r})}.
\end{equation*}
\end{lemma}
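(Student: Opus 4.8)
The plan is to reduce the Lipschitz estimate to a single uniform bound on the \emph{second} derivative of $F_{k,r}$, and then integrate twice along the segment joining the two points. Writing $f_t = (1-t)f_2 + tf_1$ for $t\in[0,1]$, and using that $\m{N}_{k,r} = F_{k,r} - \dd_0 F_{k,r}$ has vanishing differential at the origin, the fundamental theorem of calculus applied twice gives
\begin{equation*}
\m{N}_{k,r}(f_1) - \m{N}_{k,r}(f_2) = \int_0^1\!\!\int_0^1 \big(\dd^2 F_{k,r}\big)_{\sigma f_t}\big(f_t,\, f_1 - f_2\big)\,\dd\sigma\,\dd t .
\end{equation*}
Since $\norm{\sigma f_t}_{L^2_{p+4}} \le \norm{f_t}_{L^2_{p+4}} \le \norm{f_1}_{L^2_{p+4}} + \norm{f_2}_{L^2_{p+4}}$, the lemma reduces to showing that there are constants $c, C$, independent of $k$ and of $r$ (for $r$ large), such that
\begin{equation*}
\norm*{\big(\dd^2 F_{k,r}\big)_x(u, w)}_{L^2_p} \le C\,\norm{u}_{L^2_{p+4}}\,\norm{w}_{L^2_{p+4}} \qquad \text{whenever } \norm{x}_{L^2_{p+4}} \le c .
\end{equation*}

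To obtain this bound I would first fix $p$ large enough that $L^2_{p+4}(Y)\hookrightarrow C^2(Y)$, so that $\omega_{k,r} + i\del\delbar\varphi$ remains a K\"ahler metric with uniformly controlled inverse for $\norm{\varphi}_{L^2_{p+4}}\le c$. The operator $F_{k,r}$ is the scalar curvature $\mrm{Scal}(\omega_{k,r}+i\del\delbar\varphi, J_k)$ plus the lower order terms $-\tfrac12\langle\nabla\eta_{k,r},\nabla\gamma_{k,r}\rangle - \eta_{k,r} - \tfrac12\langle\nabla(\tau_{k,r}(h)),\nabla\varphi\rangle - \tau_{k,r}(h)$; of these only $\langle\nabla(\tau_{k,r}(h)),\nabla\varphi\rangle$ survives into $\dd^2 F_{k,r}$, and being bilinear in $(h,\varphi)$ and of first order it is controlled directly by multiplicativity of the Sobolev spaces, once one knows that $\tau_{k,r}(h)$ is bounded in every $C^\ell$ uniformly in $k$ on the finite-dimensional space $\overline{\f{t}}$ --- which follows from Remark \ref{Rmk:torus} together with the fact that $\nabla_{\omega_{k,r}}(k\pi_Y^*h_B)$ is $O(1)$ in the adiabatic metric. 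The principal term is handled by writing $\mrm{Scal}$ in local coordinates: its second variation in $\varphi$ is a universal expression, polynomial in the entries of $(g_{k,r}+i\del\delbar\varphi)^{-1}$ and in the curvature and connection coefficients of $\omega_{k,r}$, linear separately in the fourth derivatives of $u$ and of $w$ and involving only lower order derivatives of $x$. Each monomial is a bounded coefficient times at most two factors carrying four derivatives; by $L^2_{p+4}\hookrightarrow C^0$ and the multiplication maps $L^2_j\times L^2_{p+4}\to L^2_j$ for $0\le j\le p+4$, such a product is bounded in $L^2_p$ by a constant times $\norm{u}_{L^2_{p+4}}\norm{w}_{L^2_{p+4}}$.

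The main obstacle, as always with adiabatic limits, is to make \emph{every} constant in this argument independent of $k$, and this is where I would invoke the estimates of Fine. First, the Sobolev norms $L^2_j$ built from $\omega_{k,r}$ are uniformly equivalent in $k$ (see \cite[Remark 30]{Bronnle_extremal_projvb}), so the embedding and multiplication constants above can be taken uniform. Second --- the genuinely delicate point --- the coefficients of $\dd^2\mrm{Scal}_{\omega_{k,r},J_k}$ carry positive powers of $k$ coming from differentiation in the base directions, and one must check these do not blow up; this is precisely the content of \cite[Lemmas 6.5--6.7]{Fine_cscK_fibrations}, the key being that the most negative power of $k$ appearing in the expansion of $\dd^2\mrm{Scal}$ is $k^{-1}$ (its leading term being the order-zero vertical Lichnerowicz operator), so everything stays bounded as $k\to\infty$; the same estimates persist with $J_k$ in place of $J_0$ since $J_s\to J_0$ smoothly and $s^2 = \lambda k^{-1}$. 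Finally, $\eta_{k,r}$, $\gamma_{k,r}$ and $\tau_{k,r}$ are bounded in every $C^\ell$ uniformly in $k$ by Proposition \ref{Prop:approx_solutions_aut} and Remark \ref{Rmk:torus}. Combining these facts gives the desired second-derivative bound with $c, C$ independent of $k$ and $r$, hence the lemma; the restriction to $T$-invariant functions plays no role, since $(L^2_{p+4})^T$ and $\overline{\f{t}}$ are closed subspaces on which all estimates restrict. In effect the whole argument is that of \cite[Lemma 6.6]{DervanSektnan_ExtremalFibrations} and \cite{DervanSektnan_OSC1}, with the single addition of the deformation parameter $s$ tied to $k$, which changes nothing because of the smooth convergence $J_s\to J_0$.
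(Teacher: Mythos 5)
Your argument is correct and is essentially the one the paper invokes by citation: the paper itself offers no proof, merely observing that the lemma is ``a consequence of the mean value theorem'' and pointing to \cite[Lemma 6.6]{DervanSektnan_ExtremalFibrations}, and your double integral of $\dd^2 F_{k,r}$ along the segment is exactly the rigorous form of that reduction, with the $k$-uniformity supplied by Fine's estimates and the equivalence of Sobolev norms just as in the cited sources. Your explicit remark that the bounds persist under the substitution $J_0\rightsquigarrow J_s$ with $s^2 = \lambda k^{-1}$ (by smooth dependence) is a point the paper leaves tacit and is worth having spelled out.
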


By applying the implicit function Theorem \ref{Thm:quantitative_implicit_function_theorem}, we can now complete the proof of Theorem \ref{Thm:existence_extremal_metrics} as follows.
Lemma \ref{Lemma:lipschitz} implies that $\m{N}_{k,r}$ is Lipschitz on any ball of radius $\rho$ sufficiently small, with Lipschitz constant $\rho C$.
Thus the radius $\delta'$ on which $\m{N}_{k,r}$ is Lipschitz with constant $(2\norm{P}_{k,r})^{-1}$ is bounded below by some multiple of $k^{-5/2}$.
Hence $\delta = \delta'(2\norm{P})^{-1}$ is bounded below by a multiple of $k^{-5}$.
In order to apply the implicit function theorem, it remains to bound $F_{k,r}(0,0)$. The point-wise bound $F_{k,r} = O(k^{(-r-1)/2})$ is provided by Proposition \ref{Prop:approx_solutions_aut}. Results of Fine \cite[Lemma 5.6, 5.7]{Fine_cscK_fibrations} can be applied directly to our situation in order to have a $L^2_{p}(\omega_{k,r})$-bound on $F_{k,r}(0)$ of order $k^{5-\frac{1}{2}}$, when $r>5$.
Thus the hypotheses of the implicit function theorem are satisfied and $\norm{F_{k,r}(0)}$ converges to zero quicker than $\delta_k$.

\bibliographystyle{plain} 
\bibliography{OSCbibliography}

\end{document}